\def\R{\mathbb{R}}
\def\B{{\mathcal B}}
\def\C{{\mathcal C}}
\def\G{{\mathcal G}}
\def\P{{\mathcal P}}
\def\Q{{\mathcal Q}}
\def\S{{\mathcal S}}
\def\sM{{\mathsf M}}
\def\sU{{\mathsf U}}
\def\sX{{\mathsf X}}
\def\heta{\hat{\eta}}
\newtheorem{definition}{Definition}
\newtheorem{theorem}{Theorem}
\newtheorem{assumption}{Assumption}
\newtheorem{proposition}{Proposition}
\newtheorem{lemma}{Lemma}
\theoremstyle{remark}
\newtheorem{remark}{Remark}
\begin{document}
\sloppy
\title{On Optimal Zero-Delay Coding of\\ Vector Markov Sources
\thanks{The authors are with the Department of Mathematics and
    Statistics, Queen's University, Kingston, Ontario, Canada, K7L
    3N6.  Email: (yuksel,linder)@mast.queensu.ca.}
   \thanks{This research was
    partially supported by the Natural Sciences and Engineering
    Research Council of Canada (NSERC).}
    \thanks{The material of this paper was presented in part at the 51st IEEE
      Conference on Decision and Control (Maui, Hawaii, Dec.\ 10-13, 2012) and at the 2013 Workshop on Sequential and
Adaptive Information Theory (Montreal, Quebec, Nov.\ 7-9, 2013). }
\thanks{\copyright 2014 IEEE. Personal use
of this material is permitted.  However, permission to use this material for
any other purposes must be obtained from the IEEE by sending a request to
pubs-permissions@ieee.org. }
\thanks{DOI 10.1109/TIT.2014.2346780}
 }
\author{Tam\'as Linder and Serdar Y\"uksel }
\maketitle
\begin{abstract}
  Optimal zero-delay coding (quantization) of a vector-valued Markov source
  driven by a noise process is considered. Using a stochastic control
  problem formulation, the existence and structure of optimal quantization
  policies are studied.  For a finite-horizon problem with bounded per-stage
  distortion measure, the existence of an optimal zero-delay quantization policy is
  shown provided that the quantizers allowed are ones with convex codecells. The
  bounded distortion assumption is relaxed to cover cases that include the linear
  quadratic Gaussian problem. For the infinite horizon problem and a stationary
  Markov source the optimality of deterministic Markov coding policies is
  shown. The existence of optimal stationary Markov quantization policies is
  also shown provided randomization that is shared by the encoder and the
  decoder is allowed.
\end{abstract}
\begin{keywords}
Real-time source coding, Markov source, quantization, stochastic control, Markov
decision processes.
\end{keywords}

\section{Introduction}

\subsection{Zero-delay coding}

We consider a zero-delay (sequential) encoding problem where a sensor encodes an
observed information source without delay. It is assumed that the information
source $\{x_t\}_{t\ge 0}$ is a time-homogenous $\R^d$-valued discrete-time
Markov process.  The  initial distribution $\pi_0$ (i.e., the distribution of
$x_0$) and  the transition kernel $ P(dx_{t+1}|x_t)$
uniquely determine the distribution of $\{x_t\}_{t\ge 0}$.  In
Assumption~\ref{AssumptionA} below we will make explicit assumptions about the
transition kernel.

The encoder encodes (quantizes) the source samples and transmits the encoded
versions to a receiver over a discrete noiseless channel with input and output
alphabet $\sM\coloneqq\{1,2,\ldots,M\}$, where $M$ is a positive integer.
Formally, the encoder is specified by a \emph{quantization policy} $\Pi$, which
is a sequence of Borel measurable functions $\{\eta_t\}_{t\ge 0}$ with $\eta_t:
\sM^t \times (\R^d)^{t+1} \to \sM$.  At time $t$, the encoder transmits the
$\sM$-valued message
\[
  q_t=\eta_t(I_t)
\]
with $I_0=x_0$, $I_t=( q_{[0,t-1]} ,x_{[0,t]})$ for $t \geq 1$, where we
have used the notation $q_{[0,t-1]}=(q_0,\ldots,q_{t-1})$ and $x_{[0,t]} =
(x_0,x_1,\ldots,x_t)$.  The collection of all such zero-delay policies is called the
set of admissible quantization policies and is denoted by $\Pi_A$.

Observe that for fixed $q_{[0,t-1]}$ and $x_{[0,t-1]}$, as a
function of $x_t$, the encoder
$\eta_t(q_{[0,t-1]},x_{[0,t-1]},\,\cdot\,)$ is a \emph{quantizer}, i.e., a Borel
measurable mapping of $\R^d$ into the finite set~$\sM$. Thus any quantization
policy at each time $t\ge0$ selects a quantizer $Q_t:\R^d\to \sM$  based on past
information $(q_{[0,t-1]},x_{[0,t-1]})$,  and then ``quantizes'' $x_t$ as
$q_t=Q_t(x_t)$.

Upon receiving $q_t$, the receiver
generates its reconstruction $u_t$, also without delay. A zero-delay 
receiver policy is a sequence of measurable functions
$\gamma=\{\gamma_t\}_{t\ge 0}$ of type  $\gamma_t : \sM^{t+1} \to
\sU$, where  $\sU$ denotes the reconstruction alphabet  (usually a Borel subset of
$\R^d$).  Thus
\[
u_t=\gamma_t(q_{[0,t]}), \qquad t\ge 0.
\]

For the  finite horizon setting the goal is to  minimize the average cumulative
cost (distortion) 
\begin{equation}\label{Cost1}
J_{\pi_0}(\Pi,\gamma,T) \coloneqq
E^{\Pi,\gamma}_{\pi_0}\biggl[\frac{1}{T} \sum_{t=0}^{T-1} c_0(x_t,u_t)\biggr],
\end{equation}
for some $T \ge 1$, where $c_0: \R^d \times \sU \to \mathbb{R}$ is a
\emph{nonnegative} Borel measurable cost (distortion) function.  Here
$E^{\Pi,\gamma}_{\pi_0}$ denotes expectation under initial distribution $\pi_0$
for $x_0$; the superscript signifies that the argument is a function of
$\{x_t\}$ which depends on the quantization policy $\Pi$ and receiver policy
$\gamma$. (Later we will use the notation $E^{\Pi}_{\pi_0}$ for expectations
where the argument is a function of $\{x_t\}$ that depends only on $\Pi$, and
the notation $E_{\pi_0}$ when the argument has no dependence on either $\Pi$ or
$\gamma$.) We assume that the encoder and decoder know the initial distribution
$\pi_0$.

We also consider the infinite-horizon average cost problem where the objective
is to minimize
\begin{eqnarray}
J_{\pi_0}(\Pi,\gamma)& \coloneqq & \limsup_{T \to \infty}
J_{\pi_0}(\Pi,\gamma,T)  \nonumber \\
& = &\limsup_{T \to \infty} 
E^{\Pi,\gamma}_{\pi_0}\biggl[\frac{1}{T}   \sum_{t=0}^{T-1} c_0(x_t,u_t)\biggr].\label{infiniteCost}
\end{eqnarray}

Our main assumption on the Markov source  $\{x_t\}$ is the following.

\begin{assumption}\label{AssumptionA}  
 The evolution of $\{x_t\}$ is given by
\begin{equation}
\label{sourceChannelModel11}
x_{t+1} = f(x_t,w_t), \quad t =0, 1, 2,\ldots,
\end{equation}
where $f: \R^d\times \R^d \to \R^d$ is a Borel function and $\{w_t\}$ is an
independent and identically distributed (i.i.d$.$) vector noise sequence which
is independent of $x_0$. It is assumed that for each fixed $x\in \R^d$, the
distribution of $f(x,w_t)$ admits the (conditional)
density function $\phi(\,\cdot\,|x)$ (with respect to the $d$-dimensional
Lebesgue measure)  which is  positive everywhere. Furthermore,
$\phi(\,\cdot\,|x)$ is bounded and Lipschitz uniformly in~$x$.
\end{assumption}

The above model includes the linear systems with Gaussian noise. Further
conditions on $f$ and the cost $c_0$, and the reconstruction alphabet $\sU$ will be given
in Sections \ref{sec_finite} and \ref{LQGCaseSection} for the finite-horizon
problem (these include the case of a linear system and quadratic cost) and in
Section~\ref{sec_infinite} for the infinite-horizon problem.

Before proceeding  further with formulating the results, we provide an overview
of structural results for finite-horizon optimal zero-delay coding problems as
well as
a more general literature review.

\subsection{Revisiting structural results for finite-horizon problems}
\label{sec_structure}

Structural results for the finite horizon control problem described in the
previous section have been developed in a number of important papers. Among
these the classic works by Witsenhausen \cite{Witsenhausen} and  Walrand
and Varaiya \cite{WalrandVaraiya}, using two different approaches, are of
particular relevance.  Teneketzis \cite{Teneketzis} extended these
approaches to the more general setting of non-feedback communication and
\cite{YukITArXivMultiTerminal} extended these results to more general state
spaces (including $\R^d$). The
following two theorems summarize, somewhat informally, these two important
structural results.

\begin{theorem}[Witsenhausen \cite{Witsenhausen}] \label{witsenhausenTheorem} For
  the finite horizon problem, any  zero-delay quantization policy $\Pi=\{\eta_t\}$ can
  be replaced, without any loss in performance, by a policy
  $\hat{\Pi}=\{\heta_t\}$ which only uses
  $q_{[0,t-1]}$ and $x_t$ to generate $q_t$, i.e., such that
  $q_t=\heta_t(q_{[0,t-1]},x_t)$ for all $t=1,\ldots,T-1$.
\end{theorem}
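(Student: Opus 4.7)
The plan is to proceed by backward induction on the time index, replacing the encoders $\eta_{T-1}, \eta_{T-2}, \ldots, \eta_1$ one at a time while holding the receiver policy $\gamma$ fixed (the theorem does not modify $\gamma$, and the cost $J_{\pi_0}(\Pi,\gamma,T)$ only constrains the distribution of $\{(x_t,u_t)\}$). At the step for time $t$, I assume that $\eta_{t+1}, \ldots, \eta_{T-1}$ have already been replaced by reduced encoders $\hat{\eta}_s$ that use only $(q_{[0,s-1]}, x_s)$, and I show that $\eta_t$ may also be reduced without increasing the expected cumulative cost. The case $t=0$ needs no argument since $I_0 = x_0$ is already of the required form.

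The core step is to analyze the tail cost
\[
G_t(q_{[0,t-1]}, x_{[0,t]}, q_t) \coloneqq E^{\Pi,\gamma}_{\pi_0}\!\Big[\sum_{s=t}^{T-1} c_0(x_s, u_s) \,\Big|\, q_{[0,t-1]}, x_{[0,t]}, q_t\Big]
\]
and argue that it depends on its arguments only through $(q_{[0,t-1]}, x_t, q_t)$. The time-$t$ contribution $c_0(x_t,\gamma_t(q_{[0,t]}))$ is already a function of $(q_{[0,t]}, x_t)$. For $s > t$, I will use the Markov property of $\{x_s\}$ together with the already-reduced structure of $\hat{\eta}_{t+1}, \ldots, \hat{\eta}_{T-1}$: conditional on $(q_{[0,t-1]}, x_{[0,t]}, q_t)$, the state $x_{t+1}$ has density $\phi(\cdot|x_t)$ independent of $x_{[0,t-1]}$; then $q_{t+1} = \hat{\eta}_{t+1}(q_{[0,t]}, x_{t+1})$ is a deterministic function of $(q_{[0,t]}, x_{t+1})$, $u_{t+1}=\gamma_{t+1}(q_{[0,t+1]})$, and so on. A routine forward induction on $s$ then shows that the joint conditional law of $(x_s, q_s, u_s)_{s > t}$ depends on $x_{[0,t]}$ only through $x_t$, establishing the claim about $G_t$.

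Granted this structure of $G_t$, define the new encoder by pointwise minimization,
\[
\hat{\eta}_t(q_{[0,t-1]}, x_t) \in \arg\min_{q \in \sM} G_t(q_{[0,t-1]}, x_t, q),
\]
breaking ties by choosing the smallest index so as to obtain a Borel measurable selection; because $\sM$ is finite, no deeper measurable-selection theorem is needed. Replacing $\eta_t$ by $\hat{\eta}_t$ does not alter the distribution of $(x_{[0,t]}, q_{[0,t-1]})$, so the expected cost on $\{s<t\}$ is unchanged, and by construction
\[
E^{\hat\Pi,\gamma}_{\pi_0}[G_t(q_{[0,t-1]}, x_t, \hat{q}_t)] \;\leq\; E^{\Pi,\gamma}_{\pi_0}[G_t(q_{[0,t-1]}, x_t, q_t)],
\]
where $\hat{q}_t = \hat{\eta}_t(q_{[0,t-1]}, x_t)$. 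Iterating from $t=T-1$ down to $t=1$ yields the required $\hat{\Pi}=\{\hat{\eta}_t\}$ with no larger expected cost.

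The main obstacle, I expect, is cleanly executing the conditional-distribution argument underlying the reduction of $G_t$: it must chain together (i) the Markov property of the source, (ii) the deterministic reduced dependence of each future encoder, and (iii) the deterministic dependence of each $u_s$ on $q_{[0,s]}$, and show at every intermediate stage that conditioning on the full past $x_{[0,t]}$ may be replaced by conditioning on $x_t$ alone. Once this is in place, the pointwise minimization and the monotonicity of the cost are essentially routine, with the finiteness of $\sM$ absorbing any remaining measurability concerns.
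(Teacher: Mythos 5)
The paper states this theorem without proof, citing Witsenhausen \cite{Witsenhausen} (and \cite{YukITArXivMultiTerminal} for the $\R^d$-valued case); your backward-induction, person-by-person-optimality argument is precisely the standard proof from those references, and it is correct. The one point worth tightening is that $G_t$ should be defined as an explicit iterated integral over the future kernels $\phi(\cdot|\cdot)$, the reduced encoders $\hat{\eta}_{t+1},\dots,\hat{\eta}_{T-1}$, and the receiver maps $\gamma_s$ --- rather than as a conditional expectation under the particular policy $\Pi$ --- so that the very same measurable function serves as the cost-to-go under both $\Pi$ and $\hat{\Pi}$ and the pointwise comparison is legitimate; with $\sM$ finite, the measurable selection is then immediate, as you note.
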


For a complete and separable (Polish) metric space $\sX$ and its Borel sets
$\B(\sX)$, let ${\cal P}(\sX)$ denote the space of probability measures on $(\sX,\B(\sX))$,
endowed with the topology of weak convergence (weak topology). This topology is
metrizable with the Prokhorov metric making $\P(\sX)$ itself a Polish space.
Given a quantization policy $\Pi$, for all $t\ge 1$ let $\pi_t \in {\cal
  P}(\R^d)$ be the regular conditional probability defined by
 \[
\pi_t(A)\coloneqq P(x_t\in A | q_{[0,t-1]})
\]
for any Borel set $A\subset \R^d$.

The following result is due to Walrand and Varaiya \cite{WalrandVaraiya} who
considered sources taking values in a finite set. For the more general case of $\R^d$-valued sources the
result appeared in \cite{YukITArXivMultiTerminal}.

\begin{theorem}\label{WalrandVaraiyaTheorem} For the  finite horizon
  problem, any zero-delay quantization policy can be replaced, without any loss
  in performance, by a policy which at any time $t= 1, \ldots,T-1$ only uses the
  conditional probability measure $\pi_t=P(dx_t|q_{[0,t-1]})$ and the state
  $x_t$ to generate $q_t$. In other words, at time $t$ such a policy $\heta_t$
  uses $\pi_t$ to select a quantizer $Q_t=\heta(\pi_t)$ (where $Q_t: \R^d \to
  \sM$),  and then $q_t$ is generated as $q_t=Q_t(x_t)$.
\end{theorem}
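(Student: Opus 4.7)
The plan is to start from Theorem~\ref{witsenhausenTheorem}, which already lets us restrict attention to policies of the form $q_t = \heta_t(q_{[0,t-1]}, x_t)$, and then to recast the remaining problem as a fully observed Markov decision process whose state at time $t$ is the regular conditional distribution $\pi_t \in \P(\R^d)$ and whose action is the quantizer $Q_t : \R^d \to \sM$ selected by the encoder.

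First I would verify that $\{\pi_t\}$ is itself a controlled Markov chain whose only control variable is $Q_t$. Given $\pi_t$ and a Borel quantizer $Q_t$, the conditional probability that $q_t = i$ given $q_{[0,t-1]}$ equals $\pi_t(Q_t^{-1}(i))$, which depends on the past only through $(\pi_t, Q_t)$. Applying Bayes' rule on the cell $Q_t^{-1}(q_t)$ and then pushing forward through the source transition kernel, which under Assumption~\ref{AssumptionA} admits the density $\phi(\,\cdot\,|x)$, produces a measurable update map $F$ with $\pi_{t+1} = F(\pi_t, Q_t, q_t)$. Thus $\pi_t$ serves as a controlled Markov state whose transition kernel is a function of $(\pi_t, Q_t)$ alone.

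Next I would check that the expected per-stage cost also depends on the past only through $(\pi_t, Q_t)$. For any encoder policy of the post-Witsenhausen form, the best decoder output at time $t$ is a function of the posterior $P(dx_t\,|\,q_{[0,t]})$, and the corresponding minimum conditional distortion $\inf_{u \in \sU} E[c_0(x_t, u)\,|\,q_{[0,t]}]$ is a measurable functional of that posterior. Since the posterior is itself determined by $(\pi_t, Q_t, q_t)$, integrating over $q_t$ yields a one-step cost $\tilde c(\pi_t, Q_t)$ of the desired form. The finite-horizon problem then becomes a fully observed MDP on the Polish state space $\P(\R^d)$ with actions in the set of Borel quantizers, and standard finite-horizon dynamic-programming arguments produce an optimal policy of the form $Q_t = \heta_t(\pi_t)$, which is exactly the claim.

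The main obstacle is technical rather than conceptual: one must topologize the action space of quantizers so that both $\tilde c$ and the transition kernel induced by $F$ are jointly measurable in $(\pi, Q)$ and so that a measurable selector exists at each step of the backward recursion; one must also verify that the Bayesian update $F(\pi, Q, \cdot)$ is continuous (or at least measurable) in $\pi$ with respect to the weak topology on $\P(\R^d)$, which is where the positivity, boundedness, and uniform Lipschitz properties of $\phi(\,\cdot\,|x)$ in Assumption~\ref{AssumptionA} enter. As the excerpt notes, these measurability and selection issues were settled for finite alphabets in \cite{WalrandVaraiya} and extended to $\R^d$-valued sources in \cite{YukITArXivMultiTerminal}, which is the route I would follow here.
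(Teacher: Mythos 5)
First, a point of reference: the paper does not actually prove Theorem~\ref{WalrandVaraiyaTheorem}; it quotes it, ``somewhat informally,'' from \cite{WalrandVaraiya} (finite-alphabet sources) and \cite{YukITArXivMultiTerminal} ($\R^d$-valued sources). Measured against those proofs, the skeleton of your argument is right: starting from Theorem~\ref{witsenhausenTheorem}, observing that $\pi_{t+1}$ is determined by $(\pi_t,Q_t,q_t)$ through the filter \eqref{filtre}, and observing that the minimal conditional one-step distortion is a functional of $(\pi_t,Q_t)$ as in \eqref{eq_cdef}, is exactly the belief-MDP reduction underlying the result.

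The gap is in your closing step. You finish by invoking finite-horizon dynamic programming with a measurable selector over the class $\Q$ of \emph{all} Borel quantizers. Two problems. (i) Even if it worked, that argument would establish the existence of an optimal Markov policy, which is not the statement: the theorem asserts that \emph{every} admissible policy is dominated by a Markov policy, a per-policy replacement claim that must hold even when no optimizer exists. (ii) More seriously, the selector you need is not available at this level of generality. Theorem~\ref{WalrandVaraiyaTheorem} is stated for arbitrary Borel quantizers and a general nonnegative Borel cost, whereas the paper's entire topological apparatus (compactness of the quantizer class, continuity of $c(\pi,Q)$ and of the filter, measurable selection) is developed only for the convex-cell class $\Q_c$ under Assumption~\ref{AssumptionACompactness} or \ref{AssumptionA'}; that is precisely the content of Theorems~\ref{MeasurableSelectionApplies} and \ref{MeasurableSelectionApplies2}, which logically come \emph{after} Theorem~\ref{WalrandVaraiyaTheorem} and rely on it. If the structural theorem needed that machinery, the development would be circular. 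The proofs in \cite{WalrandVaraiya} and \cite{YukITArXivMultiTerminal} sidestep selection entirely: since the channel alphabet $\sM$ is finite, $q_{[0,t-1]}$ ranges over the finite set $\sM^t$, so under a fixed Witsenhausen-reduced policy $\pi_t$ takes at most $M^t$ values and the encoder uses at most $M^t$ quantizers at time $t$. Working backward from $t=T-1$, for each realized value of $\pi_t$ one replaces the finitely many quantizers attached to the histories producing that value by one among them whose conditional cost-to-go is no larger than their average; this is a finite minimization requiring no topology on $\Q$ and no selection theorem, and it yields a Markov policy performing at least as well as the given one. Your last paragraph should be rewritten along these lines rather than deferring the selection problem to the references, which do not resolve it by that route.
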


A policy of the type suggested by Theorem~\ref{WalrandVaraiyaTheorem} (a
so-called Walrand-Varaiya-type policy)  is called \emph{stationary} if $\heta_t=
\heta$ for all $t$, where $\heta$ is a fixed policy mapping elements of
$\P(\R^d)$  to the set of $M$-level quantizers. Stationary 
policies will play an important role in Section~\ref{sec_infinite}. 

As discussed in \cite{YukITArXivMultiTerminal}, the main difference between the
two structural results above is the following: In the setup of
Theorem~\ref{witsenhausenTheorem}, the encoder's memory space is not fixed and
keeps expanding as the encoding block length $T$ increases. In the setup of
Theorem~\ref{WalrandVaraiyaTheorem}, the memory space of an optimal encoder is
fixed (note that $\pi_t$ can be computed from $\pi_{t-1}$, $Q_{t-1}$, and
$q_{t-1}$; see equation \eqref{filtre}).  Of course, in general the space of
probability measures is a very large one. However, it may be the case that
different quantization outputs lead to the same conditional probabilities
$\pi_t$, leading to a reduction in the required memory. More importantly, the
setup of Theorem~\ref{WalrandVaraiyaTheorem} allows one to apply the powerful
theory of Markov Decision Processes on fixed state and action spaces, thus
greatly facilitating the analysis.

In this paper, we show that under quite general assumptions on the Markov
process, the cost function, and the admissible quantization policies there
always exists a Walrand-Varaiya-type policy  that
minimizes the finite horizon cost \eqref{Cost1}. For the infinite horizon
problem \eqref{infiniteCost}, we show that there exists an optimal
Walrand-Varaiya-type policy if the source is stationary.  We also show that in
general an optimal (possibly randomized) stationary quantization policy exists
in the set of Walrand-Varaiya-type policies.

The rest of the paper is organized as follows. The next section gives a brief
review of the literature.  Section~\ref{SectionSpaceQuant} contains background
material on quantizers and the construction of a controlled Markov chain for our
problem. Section~\ref{sec_finite} establishes the existence of optimal policies
for the finite horizon case for bounded cost functions. Section
\ref{LQGCaseSection} considers the quadratic
costs under conditions that cover linear systems. Section~\ref{sec_infinite} considers the more involved infinite horizon
case. Section~\ref{sec_conclusions} contains concluding discussions. Most of the
proofs are relegated to the Appendix.

\subsection{Literature review and contributions}
The existence of optimal quantizers for a one-stage ($T=1$) cost problem has
been investigated in \cite{AbayaWise}, \cite{Pollard}, and
\cite{YukLinSIAM2010},  among other works.

An important inspiration for our work is Borkar \emph{et al$.$}
\cite{BorkarMitterTatikonda} which studied the optimal zero-delay quantization of
Markov sources.  For the infinite horizon setting, this paper provided a
stochastic control formulation of the optimal quantization problem with a
Lagrangian cost that combined squared distortion and instantaneous entropy, and
gave an elegant proof for the existence of optimal policies.

It should be noted that \cite{BorkarMitterTatikonda} restricted the admissible
quantizers $Q_t$ at each time stage $t$ to so-called nearest neighbor quantizers
whose reconstruction values were also suboptimally constrained to lie within a
fixed compact set. Furthermore, some fairly restrictive
conditions were placed on the dynamics of the system. These include
requirements on the system dynamics that rule out additive noise models with unbounded
support such as the Gaussian noise (see p.~138 in \cite{BorkarMitterTatikonda}),
and a uniform Lipschitz condition on the cost functions (see the condition on
$\hat{f}$ on p. 140 in \cite{BorkarMitterTatikonda}). These conditions made it
possible to apply the discounted cost approach (see, e.g., \cite{survey}) to average cost
optimization problems.

Furthermore, the encoder-decoder structure in \cite{BorkarMitterTatikonda} has
been specified {\it a priori}, whereas in this paper, we only relax global
optimality when we restrict the quantizers to have convex codecells (to be
defined later), which is a more general condition than assuming the nearest
neighbor encoding rule. On the other hand, we are unable to claim the optimality
of deterministic stationary quantization policies for the infinite-horizon
problem, whereas \cite{BorkarMitterTatikonda} establishes optimality of
such policies. However, as mentioned, the conditions on the cost
functions, systems dynamics, and the uniform continuity condition over all
quantizers are not required in our setting.

To our knowledge, the existence of optimal quantizers for a finite horizon setting has not been considered in the
literature for the setup considered in this paper.

Other relevant work include \cite{BorkarMitterSahaiTatikonda} which considered
optimization over probability measures for causal and non-causal settings, and
\cite{Teneketzis}, \cite{Mahajan09}, \cite{MahajanTeneketzisJSAC} and
\cite{YukITArXivMultiTerminal} which considered zero-delay coding of Markov
sources in various setups. Structural theorems for zero-delay variable-rate
coding of discrete Markov sources were studied in \cite{KaMe12}.  Recently
\cite{AsWe13} considered the average cost optimality equation for coding of
discrete i.i.d.\ sources with limited lookahead and \cite{JaGo13} studied
real-time joint source-channel coding of a discrete Markov source over a discrete
memoryless channel with  feedback.

A different  model for sequential source coding, called 
causal source coding, is  studied in, e.g., \cite{NeGi82,WeMe05,LiZa06}. In causal
coding,  the reconstruction depends causally
on the source symbols, but in the information transmission process large delays
are permitted, which makes this model  less stringent (and one might argue
less practical) than zero or limited-delay source coding. 

For systems with control, structural results have also been investigated in the
literature. In particular, for linear systems with quadratic cost criteria
(known as LQG optimal control problems), it has been shown that the effect of
the control policies can be decoupled from the estimation error without any
loss.  Under optimal control policies, \cite{YukselAllerton2012} has shown the
equivalence with the control-free setting considered in this paper (see also
\cite{BaSkJo11} and \cite{NaFaZaEv07} for related results in different
structural forms, where in contrast with
\cite{YukselAllerton2012} the encoders have memory).  We also note that the design
results developed here can be used to establish the existence of optimal
quantization and control policies for LQG systems \cite{YukselAllerton2012}.

{\bf Contributions:} In view of the literature review, the main contributions of the
paper can be summarized as follows.

\begin{itemize}
\item [(i)] We establish a useful topology on the
set of quantizers, building on \cite{YukLinSIAM2010}, among other works, and
show the existence of optimal coding policies for finite horizon optimization
problems, under the assumption that the quantizers used have convex
codecells. Notably, the set of sources considered includes LQG systems, i.e.,
linear systems driven by Gaussian noise under the quadratic cost criterion. The
analysis requires the development of a series of technical results which
facilitate establishing measurable selection criteria, reminiscent of those in
\cite{HernandezLermaMCP}.

\item[(ii)] We establish, for the first time to our
knowledge, the optimality of Markov (i.e., Walrand-Varaiya type) coding policies for {\it infinite-horizon}
sequential quantization problems, using a new approach. The prior work reviewed above strictly build on dynamic programming (which is only suitable for finite-horizon problems) or does not consider the question of global optimality of Markov policies.

\item[(iii)] We show the existence of optimal
stationary, possibly randomized, policies which are globally optimal, for a
large class of sources including LQG systems. As detailed above, the
assumptions are weaker than those that have appeared in prior work.
\end{itemize}

\section{ Quantizer actions and controlled Markov process construction}\label{SectionSpaceQuant}
In this section, we formally define the space of quantizers considered in the
paper building  on the construction in \cite{YukLinSIAM2010}.  Recall the notation
$\sM=\{1,\ldots,M\}$.

\begin{definition} \label{quant_def}  An $M$-cell quantizer $Q$ on $\R^d$ is a (Borel) measurable
  mapping $Q:\R^d\to \sM$. We let $\mathcal{Q}$ denote the
  collection of all $M$-cell quantizers on $\R^d$.
\end{definition}

Note that each $Q\in \mathcal{Q}$ is uniquely characterized by
its \emph{quantization cells} (or bins)  $B_i=Q^{-1}(i) =  \{x: Q(x)=i\} $,
$i=1,\ldots, M$ which form a measurable
partition of~$\R^d$.

\smallskip

\pagebreak[2]

\begin{remark} 
\mbox{}
\begin{itemize}
\item[(i)] We allow for the possibility that
  some of the cells of the  quantizer  are empty.
\item[(ii)] In source coding theory (see, e.g., \cite{GrNe98}), a quantizer is a mapping $Q: \, \R^d
  \to \R^d$ with a finite range. In this definition, $Q$ is specified by a
  partition $\{B_1,\ldots,B_M\}$ of $\R^d$ and reconstruction values $\{
  c_1,\ldots,c_M\}\subset \R^d$ through the mapping rule $Q(x) = c_i$ if
  $x \in B_i$. In our definition, we do not include the reconstruction values.
\end{itemize}
\end{remark}

In view of Theorem~\ref{WalrandVaraiyaTheorem}, any admissible quantization
policy can be replaced by a Walrand-Varaiya-type
policy. The class of all such policies is denoted by
$\Pi_W$ and is formally defined as follows.

\begin{definition} \label{WVdef} An (admissible) quantization policy $\Pi=\{\eta_t\}$
  belongs to $\Pi_W$ if there exist a sequence of mappings $\{\heta_t\}$ of
  the type $\heta_t: \P(\R^d) \to \Q$ such  that for $Q_t=\heta_t(\pi_t)$
  we have $q_t=Q_t(x_t)=\eta_t(I_t)$.
\end{definition}

Suppose we use a quantizer policy $\Pi=\{\heta_t\}$ in $\Pi_W$.  Let
$P(dx_{t+1}|x_t)$ denote the transition kernel of the process $\{x_t\}$ determined
by the system dynamics \eqref{sourceChannelModel11} and note that $P(q_t| \pi_t,
x_t)$ is determined by the quantizer policy as $P(q_t | \pi_t,x_t) =
1_{\{Q_t(x_t)=q_t\}}$, where $Q_t=\heta_t(\pi_t)$ and $1_A$ denotes the
indicator of event $A$. Then standard properties of
conditional probability can be used to obtain the following filtering equation
for the evolution of $\pi_t$:
\begin{eqnarray}
\pi_{t+1}(dx_{t+1}) \!\!\!\! & = & \!\!\!  \frac{P(dx_{t+1}, q_t| q_{[0,t-1]})}{ P(q_t|
  q_{[0,t-1]})   }{} \nonumber \\*
&=&  \!\!\! \frac{\int_{\R^d} \pi_t(dx_t) P(q_t | \pi_t, x_t)
  P(dx_{t+1}|x_t)}{\int_{\R^d} \int_{\R^d} \pi_t(dx_t) P(q_t
  | \pi_t, x_t) P(dx_{t+1}|x_t)} \nonumber \\
&=&  \!\!\!\!\!  \frac{1}{\pi_t(Q^{-1}(q_t))} \int_{Q^{-1}(q_t)} \!\!\!\!\!\!\!\! P(dx_{t+1}|
x_t)\pi_t(dx_t).  \label{filtre}
\end{eqnarray}
Hence $\pi_{t+1}$ is determined by $\pi_t$, $Q_t$, and $q_t$, which
implies that $\pi_{t+1}$ is conditionally independent of
$(\pi_{[0,t-1]},Q_{[0,t-1]})$ given $\pi_t$ and $Q_t$. Thus $\{\pi_t\}$ can be
viewed as $\P(\R^d)$-valued controlled Markov process
\cite{HernandezLermaMCP,HernandezLermaMCP1} with $\Q$-valued control $\{Q_t\}$
and average cost up to time $T-1$ given by
\[
 E^{\Pi}_{\pi_0}\biggl[ \frac{1}{T}\sum_{t=0}^{T-1}
  c(\pi_t,Q_t)\biggr]= \inf_{\gamma} J_{\pi_0}(\Pi,\gamma,T),
\]
where
\begin{equation}
\label{eq_cdef}
c(\pi_t,Q_t): = \sum_{i=1}^M \inf_{u\in \sU} \int_{Q_t^{-1}(i)}  \pi_t(dx) c_0(x,u).
\end{equation}
In this context, $\Pi_W$ corresponds to the class of deterministic Markov
control policies. Note that this definition of average cost assumes that the
decoder uses an optimal receiver policy  for encoder
policy $\Pi$ given by \eqref{eq_cdef}; thus our focus is on the encoding operation.

Recall that by Assumption~\ref{AssumptionA} the density $\phi(\,\cdot\,|x)$ of
$f(x,w_t)$ for fixed $x$ is bounded, positive, and Lipschitz, uniformly in
$x$. By \eqref{sourceChannelModel11} and \eqref{filtre}  $\pi_t$ admits a density, which we
also denote by $\pi_t$, given by
\[
\pi_t(z)  = \int_{\R^d}  \phi(z|x_{t-1}) P(dx_{t-1}|
q_{[0,t-1]}), \quad z\in \R^d, \quad t\ge 1.
\]
Thus for any policy $\Pi$, with probability 1 we have $0<\pi_t(z)\le C$ for all
$z$ and $t\ge 1$, where $C$ is an upper bound on $\phi$. Also, if $\phi(z|x)$ is
Lipschitz in $z$  with constant $C_1$ for all $x$, then the bound
\begin{eqnarray*}
\lefteqn{
|\pi_t(z) -\pi_t(z')| } \\
 &\le  & \int_{\R^d}  \bigl|\phi(z|x_{t-1})-\phi(z'|x_{t-1})\bigr|  P(dx_{t-1}|
q_{[0,t-1]}),
\end{eqnarray*}
implies that $\{\pi_t\}_{t\ge 1}$ is uniformly Lipschitz with constant $C_1$.
The collection of all densities with these properties will play an important
part in this paper.

\begin{definition}
\label{defsetS}
Let
$\S$ denote the set of all probability measures on $\R^d$ admitting  densities
that are bounded by $C$ and Lipschitz with constant $C_1$.  
\end{definition}

Note that
viewed as a class of densities, $\S$ is uniformly bounded and equicontinuous.
Lemma~\ref{lemma_sqconv}  in the Appendix shows that $\S$ is closed in
$\P(\R^d)$. Also, the preceding argument implies the following useful lemma.

\begin{lemma}\label{technicalLemma} For any policy $\Pi\in \Pi_W$, we have
$\pi_t \in \S$ for all $t\ge 1$ with probability~1.
\end{lemma}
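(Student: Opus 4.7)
The plan is to verify the two defining properties of $\S$---uniform boundedness by $C$ and uniform Lipschitz continuity with constant $C_1$---directly from the explicit density formula
\begin{equation*}
\pi_t(z) = \int_{\R^d} \phi(z\,|\,x_{t-1})\, P(dx_{t-1}\,|\,q_{[0,t-1]}), \qquad t \geq 1,
\end{equation*}
which was derived just above the lemma from the filtering equation \eqref{filtre} together with Assumption~\ref{AssumptionA}. The qualifier ``with probability 1'' accounts for the measure-zero event on which the denominator $\pi_t(Q_t^{-1}(q_t))$ in \eqref{filtre} vanishes, so that the regular conditional probability $\pi_t$ is well-defined outside a null set, uniformly in $t$ (countable union of null sets).

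First I would verify the uniform bound. Since, by Assumption~\ref{AssumptionA}, $\phi(z\,|\,x) \leq C$ for all $z,x \in \R^d$, integrating the inequality $\phi(z\,|\,x_{t-1}) \leq C$ against the probability measure $P(dx_{t-1}\,|\,q_{[0,t-1]})$ immediately gives $\pi_t(z) \leq C$ for every $z\in\R^d$. (Strict positivity of $\pi_t$ likewise follows from $\phi(\,\cdot\,|\,x)>0$, although it is not needed for membership in $\S$.)

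Next I would verify uniform Lipschitz continuity: for any $z,z'\in\R^d$, apply the uniform-in-$x$ Lipschitz bound $|\phi(z\,|\,x) - \phi(z'\,|\,x)| \leq C_1 \|z-z'\|$ inside the integral to obtain
\begin{equation*}
|\pi_t(z)-\pi_t(z')| \leq \int_{\R^d} \bigl|\phi(z\,|\,x_{t-1}) - \phi(z'\,|\,x_{t-1})\bigr|\, P(dx_{t-1}\,|\,q_{[0,t-1]}) \leq C_1\|z-z'\|.
\end{equation*}
Combining the two bounds yields $\pi_t \in \S$ for each $t\geq 1$ almost surely, and taking a countable intersection gives the claim for all $t\geq 1$ simultaneously.

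There is essentially no deep obstacle here: the lemma is a consolidation of the computation already sketched in the paragraph preceding its statement. The only mild technicalities are (i) ensuring that the boundedness and Lipschitz estimates on $\phi(\,\cdot\,|\,x)$ pass cleanly under the integral in $x_{t-1}$, which is precisely what the \emph{uniformity in $x$} part of Assumption~\ref{AssumptionA} provides, and (ii) discarding the null set on which the filtering recursion is degenerate so that the density representation of $\pi_t$ is legitimate.
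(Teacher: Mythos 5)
Your proof is correct and follows essentially the same route as the paper, which proves the lemma via exactly the argument in the paragraph preceding its statement: the density representation $\pi_t(z)=\int \phi(z|x_{t-1})\,P(dx_{t-1}|q_{[0,t-1]})$ obtained from \eqref{filtre}, the uniform bound $C$ on $\phi$, and the uniform Lipschitz constant $C_1$ passed under the integral. Your handling of the probability-one qualifier via a countable union of null sets is a reasonable explicit account of a point the paper leaves implicit.
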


For technical reasons in most of what follows we restrict the set of
quantizers by only allowing ones that have convex cells. Formally, this
quantizer class $\Q_c$ is defined by
\[
  \Q_c=\{ Q\in \Q: Q^{-1}(i)\subset \R^d \text{ is  convex for $i=1,\ldots,M$}\},
\]
where by convention we declare the empty set convex. Note that each nonempty cell of a
$Q\in \Q_c$ is a convex polytope in $\R^d$. The class of policies $\Pi^C_W$ is obtained by replacing $\Q$ with
$\Q_c$ in Definition~\ref{WVdef}:
\begin{definition} \label{WVCdef}
$\Pi^C_W$  denotes the set of all quantization policies
$\Pi=\{\heta_t\} \in \Pi_W$
such that $\heta_t: \P(\R^d) \to \Q_c$, i.e.,  $Q_t=\heta_t(\pi_t) \in
\mathcal{Q}_c$ for all $t\ge 0$.
\end{definition}

\pagebreak[2]
\begin{remark}  \label{rem_qc} 
\mbox{}
  \begin{itemize}
  \item[(i)] The assumption of convex codecells is adopted for technical
    reasons: the structure of $\Q_c$ detailed below will let us endow it with a
    well-behaved topology.   $\Q_c$ is a fairly powerful class; for
    example, it includes as a proper subset the class of nearest-neighbor
    quantizers considered in \cite{BorkarMitterTatikonda}. Furthermore, it was
    proved in \cite{GyorgyLinder2003} that $\Q_c$ contains all $M$-level optimal
    entropy-constrained quantizers when the source has a density, while the set
    of all $M$-level nearest neighbor quantizers is clearly suboptimal in this
    sense. On the other hand, it is likely that the convex codecell assumption
    results in a loss of system optimality in our case. This can be conjectured
    from the results of \cite{Ant12} where it was shown that in multiresolution
    quantization, requiring that quantizers have convex codecells may preclude
    system optimality even for continuous sources. We note that the
    convex codecell assumption is often made when provably optimal and fast
    algorithms are sought for the design of multiresolution, multiple
    description, and Wyner-Ziv quantizers; see, e.g., \cite{MuEf08}
    and \cite{DuWu09}.

\item[(ii)] As opposed to general quantizers in  $\Q$, any $Q\in \Q_c$ has a parametric
representation. Let such a $Q$ have cells $\{B_1,\ldots,B_M\}$.  As discussed in
\cite{GyorgyLinder2003}, by the separating hyperplane theorem, there exist pairs
of complementary closed half spaces $\{(H_{i,j},H_{j,i}):\, 1\le i,j\le M, i\neq
j\}$ such that  $B_i \subset \bigcap_{j\ne i}
H_{i,j}$ for all $i$. Since $\bar{B}_i\coloneqq \bigcap_{j\ne i} H_{i,j}$ is a closed convex
polytope for each $i$, if  $P\in \P(\R^d)$ admits a density,
then $P(\bar{B}_i\setminus B_i)=0$ for all $i$. We  thus
obtain a $P$-almost sure representation of $Q$ by the $M(M-1)/2$ hyperplanes
$h_{i,j}=H_{i,j}\cap H_{j,i}$. One can represent such a hyperplane $h$ by a
vector $(a_1,\dots,a_d,b)\in \mathbb{R}^{d+1}$ with $\sum_k |a_k|^2=1$ such that
$h = \{x \in \R^d: \sum_i a_i x_i = b\}$, thus obtaining a
parametrization over $\mathbb{R}^{(d+1)M(M-1)/2}$ of all quantizers in
$\mathcal{Q}_c$.
\end{itemize}
\end{remark}

In order to facilitate the stochastic control analysis of the quantization
problem we need an alternative representation of quantizers.  As discussed in,
e.g., \cite{BorkarMitterSahaiTatikonda} and \cite{YukLinSIAM2010}, a quantizer
$Q$ with cells $\{B_1,\ldots,B_M\}$ can also be identified with the stochastic
kernel (regular conditional probability), also denoted by $Q$, from
$\R^d$ to $\sM$ defined by
\[
Q(i|x)= 1_{\{x \in B_i\}}, \quad  i=1,\ldots,M.
\]

We will endow the set of quantizers $\Q_c$ with a topology induced by the
stochastic kernel interpretation. If $P$ is a probability measure on
$\R^d$ and $Q$ is a stochastic kernel from $\R^d$ to
$ \sM$, then $PQ$ denotes the resulting joint probability measure on
$\R^d \times  \sM$ defined through $PQ(dx\,dy)=P(dx)Q(dy|x)$.
For some fixed $P\in \P(\R^d)$ let
\[
\Gamma_P\coloneqq\{PQ\in \P(\R^d\times \sM): Q\in \Q_c\}
\]
It follows from \cite[Thm.~5.8]{YukLinSIAM2010} that $\Gamma_P$ is a compact
subset of $\P(\R^d\times \sM)$ if $P$ admits a density.  If we introduce the
equivalence relation $Q\equiv Q'$ if and only if $PQ=PQ'$, then the resulting
set of equivalence classes, denoted by $(\Q_c)_P$, can be equipped with the
quotient topology inherited from $\Gamma_P$. In this topology $Q_n\to Q$ if and
only if (for representatives of the equivalence classes) $PQ_n\to PQ$
weakly. Also, if $PQ=PQ'$ for $P$ admitting a positive density, then the (convex
polytopal) cells of $Q$ and $Q'$ may differ only in their boundaries, and it
follows that $(\Q_c)_{P}=(\Q_c)_{P'}$ for any $P'$ also admitting a positive
density. From now on we will identify $\Q_c$ with $(\Q_c)_P$ and endow it with
the resulting quotient topology, keeping in mind that this definition does not
depend on $P$ as long as it has a positive density.  Lemma~\ref{lemma_sqconv} in
the Appendix shows that $\Q_c$ is compact. Note that $\pi_t$ for $t\ge 1$ always
has a positive density due to Assumption~\ref{AssumptionA}. However, in some of
the results we will allow $\pi_0$ to violate this assumption (e.g., by letting
$\pi_0$ be a point mass at a given $x_0\in \R^d$). 

For a given policy $\Pi\in \Pi^C_W$, we will consider
$\{(\pi_t,Q_t)\}$ as an $\S\times \Q_c$-valued process.

\section{Existence of optimal policies: Finite horizon setting}\label{sec_finite}

For any quantization policy $\Pi$ in $\Pi_W$ and any  $T\ge 1$ we define
\[
J_{\pi_0}(\Pi,T)\coloneqq \inf_{\gamma} J_{\pi_0}(\Pi,\gamma,T)
= E^{\Pi}_{\pi_0}\biggl[ \frac{1}{T}\sum_{t=0}^{T-1}
  c(\pi_t,Q_t)\biggr],
\]
where $c(\pi_t,Q_t)$ is defined in \eqref{eq_cdef}.

\pagebreak[2]

\begin{assumption}\label{AssumptionACompactness} 
\mbox{}
\begin{itemize}
  \item[(i)] The cost  $c_0: \R^d \times \sU \to \mathbb{R}$ is nonnegative,
    bounded, and continuous.
\item[(ii)] $\sU$ is compact.
\end{itemize}
\end{assumption}

\begin{theorem}\label{ExistenceOfOptimalReceiverPolicy}
Under Assumptions~\ref{AssumptionA} and
  \ref{AssumptionACompactness} an optimal receiver policy always exists, i.e.,
  for any $\Pi\in \Pi_W$ there exist $\gamma=\{\gamma_t\}$ such that
  $J_{\pi_0}(\Pi,\gamma,T) =J_{\pi_0}(\Pi,T)$.

\end{theorem}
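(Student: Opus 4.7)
My plan is to construct the optimal $\gamma$ explicitly, time-step by time-step, by exploiting the fact that the receiver only sees elements of the \emph{finite} message space $\sM^{t+1}$. Since $\Pi = \{\heta_t\} \in \Pi_W$ is deterministic Markov, given $q_{[0,t-1]}$ the conditional measure $\pi_t$ and the quantizer $Q_t = \heta_t(\pi_t)$ are determined, so the joint law of $(x_t, q_{[0,t]})$ under $\Pi$ is a fixed measure depending only on $\Pi$ and $\pi_0$. Using the tower property together with the fact that $u_t = \gamma_t(q_{[0,t]})$ depends only on the observed messages, one has
\[
E^{\Pi,\gamma}_{\pi_0}\bigl[c_0(x_t,u_t)\bigr] = E^{\Pi}_{\pi_0}\!\left[\int_{Q_t^{-1}(q_t)} c_0\bigl(x,\gamma_t(q_{[0,t]})\bigr)\,\pi_t(dx)\right],
\]
so the cost $J_{\pi_0}(\Pi,\gamma,T)$ separates into a sum over $t$ and over realizations of $q_{[0,t]}$, and each summand can be minimized independently in $\gamma_t(q_{[0,t]})$. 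Hence it suffices, for every fixed trajectory $q_{[0,t]}$, to minimize $u \mapsto \int_{Q_t^{-1}(q_t)} c_0(x,u)\,\pi_t(dx)$ over $u \in \sU$.

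The next step is to show this pointwise minimum is attained. For any fixed Borel set $B \subset \R^d$ and any probability measure $\mu$ on $\R^d$, the map $u \mapsto \int_B c_0(x,u)\,\mu(dx)$ is continuous on $\sU$: this follows from Assumption~\ref{AssumptionACompactness}(i) by the dominated convergence theorem, since $c_0$ is continuous in $u$ and uniformly bounded. Because $\sU$ is compact by Assumption~\ref{AssumptionACompactness}(ii), the infimum
\[
\inf_{u \in \sU} \int_{Q_t^{-1}(q_t)} c_0(x,u)\,\pi_t(dx)
\]
is attained for each realization of $(\pi_t, Q_t, q_t)$.

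Finally, since $\sM^{t+1}$ is a finite set, no nontrivial measurable selection argument is required: for each of the finitely many values $q_{[0,t]} \in \sM^{t+1}$, simply pick any minimizer and call it $\gamma_t(q_{[0,t]})$; the resulting map is automatically Borel measurable. Doing this for $t = 0,1,\ldots,T-1$ produces a receiver policy $\gamma = \{\gamma_t\}$, and by construction the per-stage expectation equals $E^{\Pi}_{\pi_0}[c(\pi_t,Q_t)]$ with $c$ defined in \eqref{eq_cdef}. Summing and dividing by $T$, we obtain $J_{\pi_0}(\Pi,\gamma,T) = E^{\Pi}_{\pi_0}[\frac{1}{T}\sum_{t=0}^{T-1} c(\pi_t,Q_t)] = J_{\pi_0}(\Pi,T)$, completing the proof. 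I do not anticipate any serious obstacle: the compactness of $\sU$ together with the boundedness and joint continuity of $c_0$ makes the pointwise optimization routine, and the finiteness of the channel alphabet eliminates the usual measurable-selection difficulty that would arise if the encoder's output alphabet were continuous.
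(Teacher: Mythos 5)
Your proof is correct and follows essentially the same route as the paper's, which simply observes that the optimal receiver must minimize $u\mapsto\int P(dx_t|q_{[0,t]})\,c_0(x_t,u)$ and that a minimizer exists by compactness of $\sU$ and continuity of the bounded $c_0$; you have just filled in the routine details (tower-property decomposition, dominated convergence, and the observation that finiteness of $\sM^{t+1}$ makes measurability of $\gamma_t$ automatic). The only blemish is that your displayed identity omits the normalization $1/\pi_t(Q_t^{-1}(q_t))$ of the conditional law $P(dx_t\mid q_{[0,t]})$ when the outer expectation is taken over the full message history including $q_t$, but since this positive factor does not depend on $u$ it does not change the minimizer and the argument is unaffected.
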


\emph{Proof.} At any $t\ge 0$ an optimal receiver has to minimize $\int
P(dx_t|q_{[0,t]}) c_0(x_t,u)$ in $u$. Under Assumption~\ref{AssumptionACompactness},
the existence of a minimizer then follows from a standard argument, see, e.g.,
 \cite[Theorem 3.1]{YukLinSIAM2010}.  \qed

 The following result states the existence of optimal policies in $\Q_c$ for the
 finite horizon setting. The proof is given in
 Section~\ref{ProofOptQuantizerSectionB} of the Appendix.

\begin{theorem}\label{MeasurableSelectionApplies}   Suppose $\pi_0$ admits a
  density or it is a point mass $\pi_0=\delta_{x_0}$ for some $x_0\in \R^d$. For
  any $T\ge 1$, under Assumptions \ref{AssumptionA} and
  \ref{AssumptionACompactness}, there exists a policy $\Pi$ in $\Pi^C_W$ such
  that
\begin{equation}
\label{eq_opt}
J_{\pi_0}(\Pi,T) = \inf_{\Pi' \in \Pi^C_W} J_{\pi_0}(\Pi',T).
\end{equation}
Let $J^T_T(\,\cdot\,)\coloneqq0$ and define $J^T_t(\pi)$ for
$t=T-1,T-2,\ldots,1$, $\pi\in \S$ and    $t=0$, $\pi=\pi_0$, 
by the  dynamic programming recursion
\begin{eqnarray}
\lefteqn{J^T_t(\pi) } \nonumber \\
 &= & \!\! \!\!\!\!\min_{Q \in \mathcal{Q}_c}
\!\!\bigg(  \frac{1}{T}  c(\pi,Q)\! +  \!
E\bigl[J^T_{t+1}(\pi_{t+1})|\pi_t\!=\!\pi,Q_t\!=\!Q\bigr]\!\! \bigg). \label{DPRecursion}
\end{eqnarray}
Then 
\[
J^T_0(\pi_0)=\min_{\Pi\in \Pi^C_W}  J_{\pi_0}(\Pi,T).
\]
\end{theorem}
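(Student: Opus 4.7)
The plan is to view \eqref{DPRecursion} as the value iteration for a finite-horizon Markov decision process with state $\pi_t$, action $Q_t\in\Q_c$, one-stage cost $\tfrac{1}{T}c(\pi_t,Q_t)$, and transition kernel $T(\pi,Q;\,\cdot\,)\in\P(\P(\R^d))$ describing the conditional law of $\pi_{t+1}$ produced by the filtering equation \eqref{filtre}, and then to invoke a standard measurable selection theorem (in the style of \cite[Ch.~3]{HernandezLermaMCP}) to obtain a Borel minimizer $\heta_t:\P(\R^d)\to\Q_c$ at each stage. Stringing these selectors together yields the required policy in $\Pi^C_W$ and simultaneously gives both \eqref{eq_opt} and the identity $J_0^T(\pi_0)=\min_{\Pi\in\Pi^C_W}J_{\pi_0}(\Pi,T)$. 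Such a theorem requires (a) a Borel state space and a compact action space, (b) lower semi-continuity of the per-stage cost, and (c) weak continuity of the transition kernel.

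Hypothesis (a) is already at hand: $\Q_c$ is compact in the stochastic-kernel quotient topology by Lemma~\ref{lemma_sqconv}, while by Lemma~\ref{technicalLemma} the process $\{\pi_t\}_{t\ge 1}$ almost surely lies in $\S$, which is closed in the Polish space $\P(\R^d)$. The main obstacle is (b) and (c); both rest on one pivotal fact, namely that whenever $\pi_n\to\pi$ in $\S$ and $Q_n\to Q$ in $\Q_c$, the joint measures $\pi_n Q_n$ converge weakly to $\pi Q$ on $\R^d\times\sM$ (a consequence of the quotient topology on $\Q_c$ combined with the uniform boundedness and equicontinuity of densities in $\S$). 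Granting this, I would rewrite
\[
c(\pi,Q)\,=\,\inf_{(u_1,\ldots,u_M)\in\sU^M}\int_{\R^d\times\sM} c_0(x,u_y)\,\pi Q(dx,dy),
\]
and apply Berge's maximum theorem---using joint continuity and boundedness of the integrand (Assumption~\ref{AssumptionACompactness}) and compactness of $\sU^M$---to conclude that $c$ is continuous on $\S\times\Q_c$. For (c), $T(\pi,Q)$ is the finite mixture with weights $\pi(Q^{-1}(i))$ (continuous in $(\pi,Q)$ by the $\sM$-marginal of $\pi_n Q_n\to\pi Q$) and atoms $\pi^{Q,i}\in\S$ given by the filtering update; the uniform boundedness and Lipschitz regularity of $\phi(\,\cdot\,|x)$ from Assumption~\ref{AssumptionA} together with the uniform bounds defining $\S$ permit exchange of limit and integral in \eqref{filtre}, yielding $\pi^{Q_n,i}_n\to\pi^{Q,i}$ in $\S$ and hence weak continuity of $T$.

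The proof is then finished by backward induction in $t$. At $t=T-1$, the map $(\pi,Q)\mapsto\tfrac{1}{T}c(\pi,Q)$ is continuous and $\Q_c$ compact, so measurable selection produces a Borel $\heta_{T-1}:\S\to\Q_c$ attaining the minimum and a continuous bounded $J_{T-1}^T$. Assuming inductively that $J_{t+1}^T$ is continuous and bounded, weak continuity of $T$ yields continuity of $(\pi,Q)\mapsto \int J_{t+1}^T(\pi')\,T(\pi,Q;d\pi')$; adding $\tfrac{1}{T}c(\pi,Q)$ and invoking measurable selection again closes the induction step, producing $\heta_t$ and a continuous bounded $J_t^T$. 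The initial time $t=0$ is handled identically whether $\pi_0$ admits a density or equals a point mass $\delta_{x_0}$, since the recursion at $t=0$ is merely a minimization over the compact $\Q_c$ of the continuous function $Q\mapsto \tfrac{1}{T}c(\pi_0,Q)+E[J_1^T(\pi_1)\mid\pi_0,Q_0=Q]$.
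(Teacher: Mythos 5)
Your proposal follows essentially the same route as the paper's proof (Theorem~\ref{thmexistencehypothesis} in the Appendix): backward induction on the recursion \eqref{DPRecursion}, continuity of $(\pi,Q)\mapsto c(\pi,Q)$ on $\S\times\Q_c$, weak-Feller continuity of the filter kernel, compactness of $\Q_c$, and a measurable selection step. The one point you understate is the mode of convergence of $\pi_nQ_n$: weak convergence alone does not let you ``exchange limit and integral'' in \eqref{filtre}, because the relevant integrand $x\mapsto \phi(z|x)1_{\{x\in Q^{-1}(i)\}}$ is only bounded and measurable --- Assumption~\ref{AssumptionA} gives no continuity of $\phi(z|\,\cdot\,)$ in the conditioning variable, and the indicator of a cell is discontinuous. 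What is actually needed, and what the paper establishes in Lemmas~\ref{keyTechnicalLemma} and \ref{lemma_sqconv} and then exploits in Lemma~\ref{TotalVarConvBins}, is convergence in \emph{total variation}: the uniform boundedness and equicontinuity of the densities in $\S$ upgrade $\pi_n\to\pi$ from weak to total variation convergence, and the convexity of the codecells (so that cell boundaries are Lebesgue-null, hence $\pi$-null) turns weak convergence of the joint measures into $\pi_n(B^n_i\bigtriangleup B_i)\to 0$, whence $\pi_nQ_n\to\pi Q$ in total variation and $\hat{\pi}(m,\pi_n,Q_n)\to\hat{\pi}(m,\pi,Q)$. You have named exactly the ingredients that deliver this upgrade, so the gap is one of precision rather than of substance; with that strengthening your induction goes through, and your Berge-maximum-theorem treatment of $c(\pi,Q)$ and explicit appeal to a measurable selection theorem are cosmetic variants of the paper's Lemmas~\ref{continuityofc} and \ref{ContinuityofOptimalValue}.
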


\section{The finite horizon problem for quadratic cost}\label{LQGCaseSection}

Linear systems driven by Gaussian noise are important in many applications in
control, estimation, and signal processing.  For such linear systems with quadratic
cost (known as LQG optimal control problems), it has been shown that the effect
of the control policies can be decoupled from the estimation error without any
loss (see \cite{YukselAllerton2012}, \cite{TatikondaSahaiMitter} and for a review \cite{YukselBasarBook}). In this section we consider the finite horizon problem under conditions that cover LQG systems. Let $\|x\|$ denote the Euclidean norm of $x\in \R^d$.  We
replace  Assumption~\ref{AssumptionACompactness} of the preceding sections with the
following.

\begin{assumption}\label{AssumptionA'} 
\mbox{}
  \begin{itemize}
  \item[(i)] The function $f$ in the system dynamics
    \eqref{sourceChannelModel11} satisfies $\|f(x,w)\| \le K\bigl(
    \|x\|+\|w\|\bigr) $ for some $K>0$
    and all $x,w\in \R^d$.
\item[(ii)] $\sU=\R^d$ and the cost is given by $c_0(x,u)=\|x-u\|^2$.
  \item[(iii)] The common distribution  $ \nu_w$ of the $w_t$ satisfies $\int \|z\|^2
    \nu_w(dz)<\infty$.
\item[(iv)] $\pi_0$ admits a density such that  $E_{\pi_0}[\|x_0\|^2]<\infty$ or
  it is a point mass $\pi_0=\delta_{x_0}$.
\end{itemize}
\end{assumption}

\begin{remark}  \label{rem3}
\mbox{}
  \begin{itemize}
\item[(i)] The above conditions cover the case of a linear-Gaussian system
\[
x_{t+1} = Ax_t + w_t, \quad t =0, 1, 2,\ldots,
\]
where $\{w_t\}$ is an i.i.d.\ Gaussian noise sequence with zero mean, $A$ is a
square matrix, and $\pi_0$ admits a Gaussian density having zero mean.

\item[(ii)] Assumption~\ref{AssumptionA'}(i)
    implies
\[
\|x_{t}\|^2 \le  \hat{K}  \biggl( \|x_0\|^2 + \sum_{i=0}^{t-1}
  \|w_{i}\|^2\biggl)
\]
for some $\hat{K}$ that depends on $t$ (see \eqref{eq_xtbound}). Together with
Assumptions~\ref{AssumptionA'}(iii) and (iv), this implies
$E_{\pi_0}\bigl[\|x_t\|^2\bigr]<\infty $ for all $t\ge 0$ under any quantization
policy . Therefore
\[
 \int_{\R^d} \|x_t\|^2P(dx_t|q_{[0,t]})\, dx<\infty
\]
and  an
optimal receiver  policy exists and is given by
\begin{equation}
\label{eq_gammaopt}
\gamma_t(q_{[0,t]})=\int_{\R^d} x_t  P(dx_t|q_{[0,t]}).
\end{equation}

\end{itemize}
\end{remark}

The following is a restatement of Theorem~\ref{MeasurableSelectionApplies} under
conditions that  allow unbounded cost.  The proof is relegated to
Section~\ref{MeasurableApplies2Proof} of the Appendix.

\begin{theorem}\label{MeasurableSelectionApplies2}
Under Assumptions  \ref{AssumptionA} and  \ref{AssumptionA'}, for any $T\ge 1$
there exists an optimal policy in $\Pi^C_W$ in the sense of \eqref{eq_opt} and
the dynamic programming recursion  \eqref{DPRecursion} for $J^T_t(\pi_t)$ also
holds.
\end{theorem}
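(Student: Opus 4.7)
The plan is to follow the same strategy as Theorem~\ref{MeasurableSelectionApplies}---dynamic programming combined with a measurable selection theorem on the compact action space $\Q_c$---while replacing the uniform boundedness of $c_0$ used there by an integrable moment bound that plays the same role in the necessary dominated convergence arguments.

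The first step is to record the uniform second-moment estimate foreshadowed in Remark~\ref{rem3}(ii). Iterating Assumption~\ref{AssumptionA'}(i) yields $\|x_t\|^2\le\hat K_t\bigl(\|x_0\|^2+\sum_{i=0}^{t-1}\|w_i\|^2\bigr)$ for a constant $\hat K_t$ depending only on $K$ and $t$; Assumptions~\ref{AssumptionA'}(iii)--(iv) then furnish $E_{\pi_0}[\|x_t\|^2]\le M_T<\infty$ for all $t\le T-1$, uniformly in the quantization policy. Hence the MMSE receiver \eqref{eq_gammaopt} exists, the optimal per-stage distortion satisfies $c(\pi_t,Q_t)\le E[\|x_t\|^2\mid q_{[0,t-1]}]$ almost surely, and $E_{\pi_0}[c(\pi_t,Q_t)]\le M_T$. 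This integrable majorant will take the place of a sup-norm bound on $c_0$ in every argument that previously used boundedness of the cost.

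The second step is to verify continuity of $Q\mapsto c(\pi,Q)$ on $\Q_c$ for each fixed $\pi\in\S$ with $\int\|x\|^2\pi(dx)<\infty$. Under the MMSE receiver, $c(\pi,Q)=\int\|x\|^2\pi(dx)-\sum_{i:\pi(B_i)>0}\pi(B_i)^{-1}\bigl\|\int_{B_i}x\pi(dx)\bigr\|^2$ with $B_i=Q^{-1}(i)$. By the definition of the topology on $\Q_c$, if $Q_n\to Q$ then $\pi Q_n\to\pi Q$ weakly on $\R^d\times\sM$; the uniform integrability provided by the common bound $\int\|x\|^2\pi(dx)$ then lifts this weak convergence to convergence of $\pi(B_i)$ and $\int_{B_i}x\pi(dx)$, so $Q\mapsto c(\pi,Q)$ is continuous.

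The remainder of the proof then mirrors that of Theorem~\ref{MeasurableSelectionApplies}: a backward induction on $t$ establishes that $J_t^T:\S\to[0,\infty)$ is lower semicontinuous and finite, the compactness of $\Q_c$ (Lemma~\ref{lemma_sqconv}) yields attainment of the minimum in~\eqref{DPRecursion}, and a standard measurable selection theorem for lower semicontinuous criteria on a compact action space (e.g.\ \cite{HernandezLermaMCP}, Appendix~D) produces a measurable selector $\heta_t^\ast:\P(\R^d)\to\Q_c$. Concatenating these selectors gives a policy in $\Pi^C_W$ realizing $J_0^T(\pi_0)$. The main obstacle I expect is the inductive verification that $(\pi,Q)\mapsto E[J_{t+1}^T(\pi_{t+1})\mid\pi_t=\pi,Q_t=Q]$ is lower semicontinuous, since $J_{t+1}^T$ is itself only l.s.c.\ and the pushforward under the filtering equation~\eqref{filtre} must be analyzed with the unbounded quadratic penalty in mind; the moment bound $M_T$ is exactly what licenses the dominated convergence step at the heart of this verification.
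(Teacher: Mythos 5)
Your high-level architecture (backward induction, compactness of $\Q_c$, measurable selection, with second-moment estimates replacing boundedness of $c_0$) is the same as the paper's, and your preliminary moment bound and the fixed-$\pi$ continuity of $Q\mapsto c(\pi,Q)$ are sound. The genuine gap sits exactly where you flag the ``main obstacle'' and then wave at it: the backward induction needs \emph{joint} regularity of $(\pi,Q)\mapsto c(\pi,Q)$ and of $(\pi,Q)\mapsto E\bigl[J^T_{t+1}(\pi_{t+1})\mid \pi_t=\pi,Q_t=Q\bigr]$ on $\S\times\Q_c$, not regularity in $Q$ for each fixed $\pi$ (without joint regularity you cannot conclude any semicontinuity of $J^T_t$ in $\pi$, which is the induction hypothesis you must pass to stage $t-1$). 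With the unbounded quadratic cost, total-variation convergence $\pi_n\to\pi$ in $\S$ does \emph{not} give $c(\pi_n,Q_n)\to c(\pi,Q)$, and your bound $M_T$ on $E_{\pi_0}[\|x_t\|^2]$ is an expectation along the trajectory; it says nothing about an arbitrary deterministic sequence of beliefs $\pi_n\to\pi$, so it cannot ``license'' the dominated convergence you invoke. The paper's fix is to weaken the continuity notion to continuity along sequences satisfying the uniform integrability condition \eqref{eq_unint}, and the induction then closes only because of two facts absent from your proposal: (i) the filter update preserves \eqref{eq_unint}, i.e., if $\{\pi_n\}$ is uniformly integrable and $\pi(Q^{-1}(m))>0$ then $\{\hat{\pi}(m,\pi_n,Q_n)\}$ is uniformly integrable (Lemma~\ref{UnifIntegrableAllSamplePaths}, which uses the linear growth $\|f(x,w)\|\le K(\|x\|+\|w\|)$); and (ii) for cells with $\pi(Q^{-1}(m))=0$ one must show $J^T_{t+1}(\hat{\pi}(m,\pi_n,Q_n))\,\pi_n(Q_n^{-1}(m))\to 0$, which fails to follow from boundedness (as in Theorem~\ref{thmexistencehypothesis}) and instead requires the growth bound $J^T_{t+1}(\mu)\le E\bigl[\frac{1}{T}\sum_{i=t+1}^{T-1}\|x_i\|^2\bigr]$ coming from \eqref{eq_secmombound1} together with the propagated estimate \eqref{eq_xtbound}.

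A secondary concern: switching from continuity to lower semicontinuity does not obviously make life easier. The per-stage cost $c(\pi,Q)$ is an infimum over reconstruction points ranging over the non-compact set $\R^d$, and an infimum of l.s.c.\ functions need not be l.s.c.; you would have to argue joint l.s.c.\ of $F_t$ on $\S\times\Q_c$ from scratch. The paper instead proves two-sided convergence $c(\pi_n,Q_n)\to c(\pi,Q)$ along uniformly integrable sequences (Lemma~\ref{lem_cconv}) precisely so that the sandwich argument of Lemma~\ref{ContinuityofOptimalValue} (in the form of Lemma~\ref{ContinuityofOptimalValue1}) yields continuity of $\min_Q F_t(\pi,Q)$, which is then fed back into the induction. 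To complete your proof you should either establish joint l.s.c.\ of $F_t$ directly (including the infimum over $\gamma$ and the degenerate-cell terms) or adopt the paper's restricted continuity notion and prove its propagation under the filter.
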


\section{Infinite horizon setting} \label{sec_infinite}

For the infinite horizon setting, one may consider the discounted cost problem
where the goal is to find policies that achieve
\begin{eqnarray}\label{LQGopt22}
V^{\beta}(\pi_0)=\inf_{\Pi\in \Pi_W^C} J_{\pi_0}^{\beta}(\Pi)
\end{eqnarray}
for some $\beta \in (0,1)$, where
\[J^{\beta}_{\pi_0}(\Pi)  = \inf_{\gamma} \lim_{T \to \infty} E^{\Pi,\gamma}_{\pi_0}\biggl[\, \sum_{t=0}^{T-1} \beta^t c_0(x_t,u_t)\biggr].
\]

The existence of optimal policies for this problem follows from the results
in the previous section. In particular, it is well known that the value
iteration  algorithm (see, e.g., \cite{CTCN}) will converge to an optimal
solution, since the cost function is bounded and the measurable selection
hypothesis is applicable in view of Theorem
\ref{MeasurableSelectionApplies}. This leads to the fixed point
equation
\begin{eqnarray*}
\lefteqn{V^{\beta}(\pi)} \\
& = & \!\!\!\!\min_{Q \in {\cal Q}_c}\!\! \bigg(\!c(\pi,Q) + \beta\!\! \int_{\R^d} \!\!\!\!
P(d\pi_{t+1}|\pi_t\!=\!\pi,Q_t\!=\!Q) V^{\beta}(\pi_{t+1})  \bigg). \nonumber
\end{eqnarray*}

The more challenging case is the average cost problem where one considers
\begin{equation}
\label{infiniteCost2}
J_{\pi_0}(\Pi)  = \inf_{\gamma} \limsup_{T \to \infty}
E^{\Pi,\gamma}_{\pi_0}\biggl[\frac{1}{T}   \sum_{t=0}^{T-1} c_0(x_t,u_t)\biggr]
\end{equation}
and the goal is to find an optimal policy attaining
\begin{eqnarray}\label{LQGoptAVG}
J_{\pi_0} \coloneqq   \inf_{\Pi\in \Pi_A}  J_{\pi_0}(\Pi) .
\end{eqnarray}

For the infinite horizon setting the structural results in
Theorems~\ref{witsenhausenTheorem} and \ref{WalrandVaraiyaTheorem} are not
available in the literature, due to the fact that the proofs are based on
dynamic programming which starts at a finite terminal time stage and optimal
policies are computed backwards. Recall that $\pi^*\in \P(\R^d)$ is called an
invariant measure for $\{x_t\}$ if setting $\pi_0=\pi^*$ results in $P(x_t\in B)
= \pi^*(B)$ for every  $t$ and Borel set $B$ (in this case $\{x_t\}$ is a 
strictly stationary process). The next result proves an infinite-horizon analog of
Theorem~\ref{WalrandVaraiyaTheorem} under the assumption that an invariant
measure $\pi^*$ for $\{x_t\}$ exists and $\pi_0=\pi^*$.

\begin{theorem}
\label{thm_QWopt}
Assume the cost $c_0$ is bounded and an invariant measure $\pi^*$ exists. If
$\{x_t\}$ starts from $\pi^*$, then there
exists an optimal policy in $\Pi_W$ solving the minimization problem
(\ref{LQGoptAVG}), i.e., there exists $\Pi\in \Pi_W$ such that
\[
\limsup_{T \to \infty}  E^{\Pi}_{\pi^*}\biggl[ \frac{1}{T}  \sum_{t=0}^{T}
 c(\pi_t,Q_t)\biggr] = J_{\pi^*}.
\]
\end{theorem}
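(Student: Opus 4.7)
The plan is to pass from the finite-horizon results of Section~\ref{sec_finite} to the infinite-horizon problem. Since $\Pi_W \subset \Pi_A$, the nontrivial inequality is $\inf_{\Pi \in \Pi_W} J_{\pi^*}(\Pi) \le J_{\pi^*}$, with attainment. For each finite $T$, Theorems~\ref{WalrandVaraiyaTheorem} and~\ref{MeasurableSelectionApplies} jointly yield a policy $\hat\Pi^{(T)} \in \Pi_W^C$ attaining $v_T \coloneqq \inf_{\Pi \in \Pi_A} J_{\pi^*}(\Pi, T)$; boundedness of $c_0$ combined with the elementary inf/limsup interchange then gives $\liminf_{T \to \infty} v_T \le J_{\pi^*}$. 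My goal is therefore to build a single WV policy $\hat\Pi \in \Pi_W$ whose infinite-horizon cost is bounded above by $\liminf_T v_T$, by extracting a limit from the family $\{\hat\Pi^{(T)}\}$.

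For the limit extraction I would use the compactness of $\Q_c$ (Lemma~\ref{lemma_sqconv}) together with the closure of $\S$ to view the joint law of the state-action sequence $(\pi_t, Q_t)_{t \ge 0}$ under $\hat\Pi^{(T)}$ as a probability measure on $(\S \times \Q_c)^{\mathbb{N}}$. Tightness of these laws (from stationarity of $\{x_t\}$ starting at $\pi^*$ and compactness of $\Q_c$), combined with a Prohorov/diagonal argument, would deliver a weakly convergent subsequence whose limiting law corresponds to a (possibly randomized) Markov policy on the controlled process with state $\pi_t$, action $Q_t$, and per-stage cost $c(\pi_t, Q_t)$. A measurable selection step in the spirit of the proof of Theorem~\ref{MeasurableSelectionApplies}, combined with continuity of $c$ in $(\pi, Q)$, would then extract a deterministic $\hat\Pi \in \Pi_W^C \subset \Pi_W$ whose long-run average cost is dominated by the liminf of the finite-horizon averages $v_T$.

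The main obstacle is twofold. First, the Walrand-Varaiya reduction is inherently backward dynamic programming, so $\hat\Pi^{(T)}$ genuinely depends on $T$ and there is no direct ``stabilization'' of $\hat\eta^{(T)}_t$ as $T \to \infty$ to exploit. Second, and more delicate, is bounding $J_{\pi^*}(\hat\Pi)$ (an infinite-horizon limsup along the \emph{limit} policy) by the liminf of the finite-horizon averages $v_T$, since these two notions of cost are structurally different: weak convergence of joint laws supplies information about fixed-time marginals but not directly about Ces\`aro limits of random sample paths. Stationarity of the source from $\pi^*$ is essential here, both for the tightness required by Prohorov and for supplying the shift-invariant structure needed to pass $\limsup_T$ through the weak limit and link back to the bound involving $v_T$; this is precisely where the hypothesis $\pi_0 = \pi^*$ must enter, explaining why the result cannot be stated for arbitrary initial distributions.
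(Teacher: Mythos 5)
Your first step is sound and coincides with the paper's: using Theorem~\ref{WalrandVaraiyaTheorem} one obtains, along a sequence of horizons $T_k$, policies $\Pi_k\in\Pi_W$ with $J_{\pi^*}(\Pi_k,T_k)\le J_{\pi^*}+2/k$. The gap is in your second step. Extracting a weak limit of the joint laws of $(\pi_t,Q_t)_{t\ge 0}$ under the finite-horizon optimal policies $\hat{\Pi}^{(T)}$ does not produce a policy whose \emph{infinite-horizon} average cost is controlled by $\liminf_T v_T$: weak convergence of these laws constrains only fixed-time marginals, and for fixed $t$ the per-stage cost $E^{\hat{\Pi}^{(T)}}[c(\pi_t,Q_t)]$ bears no uniform relation to the horizon-$T$ average $v_T$ (finite-horizon optimal policies are strongly time-inhomogeneous near the end of the horizon, and the controlled belief process $(\pi_t,Q_t)$ is not stationary even when $\{x_t\}$ is, since the policy is time-varying). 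You identify this as ``the main obstacle'' but do not resolve it, so the argument as written does not close; moreover the limiting object on $(\S\times\Q_c)^{\mathbb{N}}$ need not be realizable by any Walrand--Varaiya policy at all.

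The paper avoids limit extraction entirely by a concatenation construction: the $k$-th near-optimal finite-horizon policy $\Pi_k$ is run repeatedly, $n_k$ times in succession, each repetition restarting the encoder's belief at $\pi^*$. Stationarity of $\{x_t\}$ --- this is precisely where the hypothesis $\pi_0=\pi^*$ enters --- guarantees that every repetition incurs expected cost exactly $T_k J_{\pi^*}(\Pi_k,T_k)$, and the repetition counts $n_k$ are chosen via \eqref{eq_nk} so that both the contribution of all earlier blocks and the partial-block boundary terms vanish in the Ces\`aro average (estimates \eqref{eq_Tbound1}--\eqref{eq_Tbound3}). This yields a single explicit $\Pi\in\Pi_W$ with $\limsup_T$ average cost at most $\limsup_k J_{\pi^*}(\Pi_k,T_k)\le J_{\pi^*}$, using no compactness or weak-convergence machinery. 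To salvage your route you would need a genuinely new ingredient to pass from finite-dimensional weak limits to a bound on the long-run average; the block construction is the direct way to do this.
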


The proof of the theorem relies on a construction that pieces together
policies from $\Pi_W$ that on time segments of appropriately large lengths
increasingly well approximate the minimum infinite-horizon cost achievable by policies in
$\Pi_A$. Since the details are somewhat tedious, the proof is relegated to
Section~\ref{Opt_PW_policy} of the Appendix. We note that the condition that
$c_0$ is bounded is not essential and, for example, the theorem holds for the
quadratic cost if the invariant measure has a finite second moment.

\begin{remark} \label{rem_approxopt} 
\mbox{}
\begin{itemize}
\item[(i)] If the source is a positive Harris recurrent Markov chain
  \cite{MeynBook}, then the policy constructed in the proof of
  Theorem~\ref{thm_QWopt} achieves the optimal average cost corresponding to the
  stationary source even when the chain is not started from the invariant
  distribution $\pi^*$. This can be shown by inspecting the details of the proof
  and using the continuity of the \emph{value function} as stated by
  Theorem~\ref{thmexistencehypothesis} in the Appendix, combined with the fact
  that  $P(x_t\in \,\cdot \,)\to \pi^* $ in
  total variation   as $t\to \infty$ for any initial distribution $\pi_0$. 

\item[(ii)] The proof of Theorem~\ref{thm_QWopt} demonstrates that for every
  $\epsilon>0$  there exists a finite memory encoding policy whose performance is
  within $\epsilon$ of the optimum value $J_{\pi^*}$.  This scheme can be
  computed using finite horizon dynamic programming, giving the result
   practical relevance. 
\end{itemize}
\end{remark}

The optimal policy constructed in the proof of Theorem~\ref{thm_QWopt} may not
be stationary. In general, a stationary policy in a given class of policies is
called optimal if it performs as well as any other policy in that class. In the
next section we establish the existence of an optimal stationary policy in
$\Pi^C_W$ if randomization is allowed.

\subsection{Classes of randomized quantization policies}

We will consider two classes of randomized policies.

\emph{Randomized Walrand-Varaiya-type (Markov) policies:} These policies,
denoted by $\bar{\Pi}_W^C$, are randomized over $\Pi^C_W$, the
Walrand-Varaiya-type Markov policies with quantizers having convex cells
(Definition~\ref{WVCdef}).  Each $\Pi\in \bar{\Pi}_W^C$ consists of a sequence
of stochastic kernels $\{\bar{\eta}_t\}$ from $ \P(\R^d)$ to $\Q_c$. Thus, under
$\Pi$, for any $t\ge0$,
\begin{eqnarray*}
\lefteqn{P^{\Pi}\bigl(Q_t(x_t)=q_t | q_{[0,t-1]}, Q_{[0,t-1]}, \pi_{[0,t]}\bigr)} \nonumber
\qquad\qquad \\
 &=&\!\!\!\int_{\mathcal{Q}_c} \biggl( \int_{\R^d}
 1_{\{Q(x)=q_t\}} \pi_t(dx)\biggr) \bar{\eta}(dQ|\pi_t). 
\end{eqnarray*}

It follows from, e.g., \cite{Gikhman} or \cite{SLY}  that an equivalent model
for randomization 
can be obtained by considering an i.i.d.\ randomization sequence $\{r_t\}$,
independent of $\{x_t\}$ and uniformly distributed on $[0,1]$, and a sequence of
(measurable) randomized encoders $\{\heta_t\}$ of the form $\heta_t:
\P(\R^d)\times[0,1] \to \Q_c$ and $Q_t$ such that  $Q_t=\heta_t(\pi_t,r_t)$. In this case the
induced stochastic kernel encoder  $\bar{\eta}_t$ is determined by
\[
\bar{\eta}_t(D|\pi_t)= u\bigl\{ r: \heta(\pi_t,r)\in D \bigr\}
\]
for any Borel subset $D$ of $\Q_c$, where $u$ denotes the uniform distribution
on $[0,1]$. For randomized policies we assume that all the randomization
information is shared between the encoder and the decoder, that is
\[
I^r_t\coloneqq (q_{[0,t-1]}, r_{[0,t-1]})
\]
is known at the decoder which can  therefore track  $\pi_t$ given by
\[
\pi_t(A) \coloneqq P(x_t\in A | q_{[0,t-1]}, r_{[0,t-1]})
\]
for any Borel set $A\subset \R^d$.

We note that the cost $c(\pi_t,Q_t)$ is still defined by \eqref{eq_cdef} since
the decoder, having access to $I^r_t$ can also track $Q_t$. Also, in computing the
cost $ E^{\Pi}_{\pi_0}\Bigl[\frac{1}{T} \sum_{t=0}^{T} c(\pi_t,Q_t)\Bigr]$
of policy $\Pi\in \bar{\Pi}_W^C$  after $T$ time stages,  the
expectation is also taken with respect to the randomization sequence $\{r_t\}$.

\emph{Randomized stationary Walrand-Varaiya-type (Markov) policies:} Denoted by
$\bar{\Pi}_{W,S}^C$, this class consists of all policies in $\bar{\Pi}_W^C$ that
are stationary, i.e., the stochastic kernels $\bar{\eta}_t$ or the randomized encoders
$\heta_t$ do not depend on the time index $t$.

\subsection{Existence of optimal  stationary policies}

\subsubsection{The bounded cost case}
\label{sec_infhor_bounded}

In the infinite horizon setting, we add the following assumption, in
addition to Assumptions \ref{AssumptionA} and \ref{AssumptionACompactness}.
\begin{assumption}\label{AssumptionB} 
The chain $\{x_t\}$ is positive Harris recurrent (see \cite{MeynBook}) with unique invariant measure $\pi^*$ such that for all $x_0\in \R^d$,
\[
\lim_{t \to \infty} E_{\delta_{x_0}}\bigl[\|x_t\|^2\bigr] = \int_{\R^d}
\|x\|^{2} \pi^*(dx)< \infty.
\]
\end{assumption}

\begin{remark} A sufficient condition for Assumption~\ref{AssumptionB} to hold
  is that $f(x,w)$ in (\ref{sourceChannelModel11}) is continuous in $x$ and
  satisfies $\|f(x,w)\|\le K\bigl(\|x\|+\|w\|\bigr)$ for some $K<1$, and $w_t$
  has zero mean and second moment $E\bigl[\|w_t\|^2\bigr] <\infty$. This follows
  since the upper bound on $f$ and a  straightforward calculation imply that
  the drift condition \cite{MeynBook}
\[
E[V(x_{t+1})|x_t=x] \leq V(x) - g(x) + b1_{\{x \in C\}}
\]
holds with  $V(x)=\|x\|^2$, $g(x)=(1-(K^2+\epsilon))\|x\|^2$,  $0 <
  \epsilon < 1 - K^2$, and  $C=\{\|x\| \leq M\}$ (a compact set), where
\[
M=\frac{K^2E[\|w\|]+\sqrt{(K^2E[\|w\|])^2 + K^2\epsilon E[\|w\|^2] }}{\epsilon}
\] 
and $b= K^2(E[\|w\|^2]+ 2 E[\|w\|]M)$.  The continuity of $f(x,w)$ in $x$
implies that the chain is weak Feller \cite{MeynBook}. This and the drift
condition imply through \cite[Theorem 2.2]{YukMeynTAC2013} that there exists an
invariant probability measure with a finite second moment. The irreducibility
and aperiodicity \cite{MeynBook} of the chain under Assumption~\ref{AssumptionA}
implies the uniqueness of the invariant probability measure and positive Harris
recurrence, leading to Assumption~\ref{AssumptionB}.
  \end{remark}

To show the existence of an optimal stationary policy, we adopt the convex
analytic approach of \cite{Borkar} (see \cite{survey} for a detailed
discussion). Here we only present the essential steps.

Fix a policy $\Pi\in \bar{\Pi}_W^C$ and an initial distribution $\pi_0$ .  Let
$v_t\in \P(\P(\R^d)\times \Q_c) $ be the sequence of  expected  occupation measures
determined by
\[
v_t(D) = E_{\pi_0}^{\Pi} \biggl(  \frac{1}{t}  \sum_{i=0}^{t-1} 1_{\{(\pi_i,Q_i)
 \in D\}}  \biggr)
\]
for any Borel subset $D$ of $\P(\R^d) \times \Q_c$.

Let $P(d\pi_{t+1} | \pi_t,Q_t)=P^{\Pi}(d\pi_{t+1} | \pi_t,Q_t) $ be the
transition kernel determined by the filtering equation $\eqref{filtre}$ and note
that it does not depend on $\Pi$ and~$t$. Also note that $P(\S | \pi,Q)=1$ for
any $\pi$ and $Q$, where $\S \subset \P(\R^d)$ is the set of probability
measures, defined in Definition~\ref{defsetS}, which admit densities that
satisfy the same upper bound and Lipschitz condition as the density of the
additive noise $w_t$ ($\S$ contains the set of reachable states for $\{\pi_t\}$
under any quantization policy).

If $\sX$ is a topological space, let $\C_b(\sX)$ denote the set of all bounded
and continuous real-valued functions on $\sX$.
Let   $\G$  be the set of so-called ergodic occupation measures on
$\P(\R^d)  \times \Q_c$, defined by
\begin{eqnarray*}
\G &= & \biggl\{v \in \P(\P(\R^d)  \times \Q_c)  :
 \int f(\pi) v(d\pi\, dQ) \\
& &\!\!\!\!\!\!\!\! \!\!\!\!  =\iint f(\pi') P( d\pi'|
\pi, Q) v(d\pi\, dQ)    \text{ for all $f\in \C_b(\P(\R^d))$}   \biggr\} .
\end{eqnarray*}
Note that any $v\in \G$ is supported on $\S\times \Q_c$.

Any $v\in \G$ can be disintegrated as $v(d\pi\, dQ) =
\hat{v}(d\pi)\bar{\eta}(dQ|\pi)$, where $\bar{\eta}$ is a stochastic kernel from
$\P(\R^d)$  to $\Q_c$ which  corresponds to the randomized stationary policy
$\Pi=\{\bar{\eta}_t\}$ in $\bar{\Pi}^C_{W,S}$ such that $\bar{\eta}_t=\bar{\eta}$
for all $t$. Then the transition kernel of the process $\{(\pi_t,Q_t)\}$  induced by  $\Pi$ does not depend on $t$ and is
given by
\[
P^{\Pi}( d\pi_{t+1} \, dQ_{t+1}| \pi_t, Q_t)  = P( d\pi_{t+1}| \pi_t, Q_t)
\bar{\eta}(dQ_{t+1}|\pi_t).
\]
In fact, it directly follows from the definition of $\G$ that
\begin{eqnarray}
\lefteqn{\int g(\pi,Q) v(d\pi\, dQ) } \nonumber \\
&=& \int \int g(\pi',Q') P^{\Pi}( d\pi' \, dQ'|
\pi, Q) v(d\pi\, dQ) \label{eq_vinv}
\end{eqnarray}
for all $g\in  \C_b(\P(\R^d)\times \Q_c )$, i.e., $v$ is
an invariant measure for the transition kernel $P^{\Pi}$.

The following proposition, proved in Section~\ref{StationaryOptimalProof}, will
imply the existence of optimal stationary policies.

\begin{proposition}\label{weakLimitTheorem}
\begin{itemize}
\item[\rm (a)]  For any initial distribution $\pi_0$ and policy  $\Pi\in \bar{\Pi}_W^C$,
if $\{v_{t_n}\}$ is a subsequence of the expected occupation measures  $\{v_t\}$ such that $v_{t_n}\to \bar{v}$ weakly, then
$\bar{v}\in \G$. Furthermore
\begin{eqnarray}
\lefteqn{ \lim_{n\to  \infty} \int_{\P(\R^d) \times \Q_c }c(\pi,Q)
  v_{t_n}(d\pi\, dQ)} \qquad \quad  \nonumber \\
&  = &
\int_{\P(\R^d) \times \Q_c }c(\pi,Q) \bar{v}(d \pi\,  dQ).   \label{eq_cvtconv}
\end{eqnarray}
\item[(b)]  For any  $x_0\in \R^d$, initial distribution $\pi_0=\delta_{x_0}$,
  and policy $\Pi\in \bar{\Pi}_W^C$, $\{v_t\}$ is relatively compact.
\item[\rm (c)] $\G$ is compact.
\end{itemize}
\end{proposition}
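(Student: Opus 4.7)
The plan is to treat the three parts in order, with the weak Feller property of the filtering kernel $(\pi,Q)\mapsto P(d\pi_{t+1}|\pi,Q)$ on $\S\times \Q_c$ doing the heavy lifting; this is the same joint continuity already exploited in the finite-horizon proofs behind Theorems \ref{MeasurableSelectionApplies} and \ref{MeasurableSelectionApplies2}.

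For part (a), the core is a telescoping identity. Since $P(d\pi_{t+1}|\pi_t,Q_t)$ is the one-step law of $\pi_{t+1}$ given $(\pi_t,Q_t)$, for any $f\in \C_b(\P(\R^d))$ the definition of $v_t$ gives
\[
\int f(\pi)\,v_t(d\pi\,dQ) - \iint f(\pi')P(d\pi'|\pi,Q)\,v_t(d\pi\,dQ) = \frac{1}{t}\bigl(E^{\Pi}_{\pi_0}[f(\pi_0)]-E^{\Pi}_{\pi_0}[f(\pi_t)]\bigr),
\]
whose absolute value is at most $2\|f\|_\infty/t\to 0$. Along $v_{t_n}\to\bar v$, the limit $\bar v$ is supported on the closed set $\S\times \Q_c$ (since $v_{t_n}(\S^c\times \Q_c)\le 1/t_n$, $\S$ is closed by Lemma \ref{lemma_sqconv}, and Portmanteau applied to the open complement forces $\bar v(\S^c\times \Q_c)=0$). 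The first integral converges by weak convergence; the second converges because the weak Feller property makes $(\pi,Q)\mapsto\int f(\pi')P(d\pi'|\pi,Q)$ continuous and bounded on $\S\times \Q_c$. This yields $\bar v\in \G$. Equation \eqref{eq_cvtconv} is handled identically with $c(\pi,Q)$ in place of $f$, using continuity of $c$ on $\S\times \Q_c$ (the same fact underlying measurable selection) and boundedness from Assumption \ref{AssumptionACompactness}.

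For part (b), compactness of $\Q_c$ reduces tightness of $\{v_t\}$ in $\P(\P(\R^d)\times \Q_c)$ to tightness of the $\P(\R^d)$-marginals $\{\hat v_t\}$. Under Assumption \ref{AssumptionB} and $\pi_0=\delta_{x_0}$, $\sup_t E_{\delta_{x_0}}[\|x_t\|^2]<\infty$, so
\[
\int\pi(\{\|x\|>R\})\,\hat v_t(d\pi)=\frac{1}{t}\sum_{i=0}^{t-1}P(\|x_i\|>R)\longrightarrow 0
\]
as $R\to\infty$, uniformly in $t$. Given $\epsilon>0$, pick $R_n\uparrow\infty$ so that this quantity is at most $\epsilon\,2^{-n}/n$ for all $t$, and set
\[
K_\epsilon=\{\delta_{x_0}\}\cup\bigl\{\pi\in\S:\pi(\{\|x\|>R_n\})\le 1/n\text{ for all }n\bigr\}.
\]
This set is tight in $\P(\R^d)$ (hence relatively compact by Prokhorov) and closed (using closedness of $\S$ and weak lower semicontinuity on the open sets $\{\|x\|>R_n\}$), hence compact. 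Markov's inequality combined with Lemma \ref{technicalLemma} gives $\hat v_t(K_\epsilon)\ge 1-\epsilon$ uniformly in $t$.

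Part (c) has two halves. Closedness of $\G$ follows from the weak Feller passage-to-the-limit used in (a): if $v_n\in \G$ converge weakly to $v$, both sides of the invariance identity converge to their counterparts evaluated at $v$. For tightness, I exploit invariance: disintegrating $v\in \G$ as $v(d\pi\,dQ)=\hat v(d\pi)\bar\eta(dQ|\pi)$ identifies $\bar\eta$ as a randomized stationary policy under which $\hat v$ is invariant, so the source-level marginal $\rho_v(A)\coloneqq\int\pi(A)\hat v(d\pi)=P(x_t\in A)$ under $\pi_0\sim\hat v$ is invariant for the source kernel. Uniqueness in Assumption \ref{AssumptionB} forces $\rho_v=\pi^*$, giving the $v$-uniform bound $\int\pi(\{\|x\|>R\})\hat v(d\pi)=\pi^*(\{\|x\|>R\})\to 0$; the construction of part (b) then yields a single compact set $K_\epsilon\times \Q_c$ with $v$-mass at least $1-\epsilon$ for every $v\in \G$. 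The main obstacle throughout is verifying weak Feller continuity of the filtering kernel jointly on $\S\times \Q_c$, but this is exactly the continuity already developed in the finite-horizon analysis.
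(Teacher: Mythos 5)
Your proposal follows the paper's strategy in all essentials: the telescoping identity plus the weak Feller property of the filtering kernel for part (a) and for closedness in (c), moment bounds from Assumption~\ref{AssumptionB} for tightness in (b), and the identification of the source-level marginal $\rho_v$ with the unique invariant measure $\pi^*$ for tightness of $\G$ in (c). Two remarks. First, in part (b) the paper packages tightness into a single lemma using the compact sets $K_\alpha=\{\pi:\int\|x\|^2\pi(dx)\le\alpha\}$ and a one-line Markov inequality at the level of the occupation measures; your construction via nested tail sets $\{\pi:\pi(\{\|x\|>R_n\})\le 1/n\}$ and a union bound is correct (closedness follows from lower semicontinuity of $\pi\mapsto\pi(U)$ for open $U$, as you say) but is more laborious and, unlike the paper's lemma, is not directly reusable verbatim for the $(2+\epsilon)$-moment variant needed in Theorem~\ref{StationaryOptimalQuadratic}. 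Second, and this is the one place where you are too quick: you assert that \eqref{eq_cvtconv} is ``handled identically with $c$ in place of $f$,'' but $c$ is only known to be continuous on $\S\times\Q_c$, while weak convergence $v_{t_n}\to\bar v$ only lets you pass to the limit against functions continuous and bounded on all of $\P(\R^d)\times\Q_c$; since $v_{t_n}$ carries mass $1/t_n$ off $\S\times\Q_c$ (the $t=0$ term), you need an extra step. The paper closes this by extending $c$ to $\tilde c\in\C_b(\P(\R^d)\times\Q_c)$ via the Tietze--Urysohn theorem and using $v_{t_n}(\S\times\Q_c)\ge 1-1/t_n$ together with boundedness to control $\int|\tilde c-c|\,dv_{t_n}$; you have all the ingredients (the mass bound, closedness of $\S\times\Q_c$, boundedness of $c$) but should state the extension explicitly.
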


For any initial distribution $\delta_{x_0}$ and policy $\Pi\in
\bar{\Pi}_{W}$, we have
\begin{eqnarray*}
\lefteqn{ \liminf_{T \to \infty} E_{\delta_{x_0}}^{\Pi}  \biggl[\frac{1}{T}
\sum_{t=0}^{T-1} c(\pi_t,Q_t) \biggr] } \qquad \\
& = & \liminf_{T\to  \infty} \int_{\P(\R^d) \times \Q_c }c(\pi,Q)
v_T(d \pi\,  dQ).
\end{eqnarray*}
Let
$\{v_{T_n}\}$  be a subsequence of $\{v_T\}$ such that
\begin{eqnarray*}
\lefteqn{ \liminf_{T\to  \infty} \int_{\P(\R^d) \times \Q_c }c(\pi,Q)
v_T(d \pi\,  dQ) } \qquad \\
&= &  \lim_{n\to  \infty} \int_{\P(\R^d) \times \Q_c }c(\pi,Q)
v_{T_n}(d \pi\,  dQ).
\end{eqnarray*}
By Proposition~\ref{weakLimitTheorem}(b) there exists a subsequence
of $\{v_{T_n}\}$, which we  also denote by $\{v_{T_n}\}$,  weakly converging to
some $\bar{v}$. By Proposition~\ref{weakLimitTheorem}(a) we have $\bar{v}\in \G$ and  $\int c\, dv_{T_n} \to \int c\, d\bar{v}$.  Therefore
\begin{eqnarray*}
  \liminf_{T \to \infty} E_{\delta_{x_0}}^{\Pi}  \biggl[\frac{1}{T}
\sum_{t=0}^{T-1} c(\pi_t,Q_t) \biggr]  \!\!\!
&=  &  \!\!\!
\int_{\P(\R^d) \times \Q_c }  \!\!\! c(\pi,Q) \bar{v}(d \pi\,  dQ)\\
&\ge &  \!\!\!  \inf_{v\in \G} \int_{\P(\R^d) \times \Q_c } \!\!\!  \!\!\!
\!\!\! c(\pi,Q)v(d \pi\,  dQ). 
\end{eqnarray*}
In addition, since $c$ is continuous on $\S\times \Q_c$ (by
Lemma~\ref{continuityofc}) and each  $v\in \G$ is supported on $\S\times \Q_c$,
the mapping
$v\mapsto \int c\, dv$ is continuous on $\G$. Since  $\G$ is compact
by Proposition~\ref{weakLimitTheorem}(c), there exists $v^*\in \G$
achieving the above infimum.  Hence
\begin{eqnarray}
c^* &\coloneqq  & \int_{\P(\R^d) \times \Q_c }c(\pi,Q) v^*(d \pi\,  dQ)
\nonumber\\ &= &  \min_{v\in \G} \int_{\P(\R^d) \times \Q_c }c(\pi,Q)
v(d\pi,dQ) \label{eq_optvstar} 
 \end{eqnarray}
provides an ultimate lower bound on the infinite-horizon cost of any policy.

The following theorem shows the existence of a stationary policy achieving this
lower bound if we consider the initial distribution $\pi_0$ as a ``design
parameter'' we can freely choose.

\begin{theorem}\label{StationaryOptimal}
  Under Assumptions \ref{AssumptionA}, \ref{AssumptionACompactness} and
  \ref{AssumptionB}, there exists a stationary policy $\Pi^*$  in
  $\bar{\Pi}^C_{W,S}$ that is optimal in the sense that with an appropriately
  chosen initial distribution $\pi_0^*$,
\[
\lim_{T \to \infty} E_{\pi_0^*}^{\Pi^*}  \biggl[\frac{1}{T}
\sum_{t=0}^{T-1} c(\pi_t,Q_t) \biggr] \le  \liminf_{T \to \infty} E_{\delta_{x_0}}^{\Pi}  \biggl[\frac{1}{T}
\sum_{t=0}^{T-1} c(\pi_t,Q_t) \biggr]
\]
for any $x_0\in \R^d$ and  $\Pi\in \bar{\Pi}^C_{W}$.
\end{theorem}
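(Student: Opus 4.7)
My plan is to apply the convex-analytic lower bound $c^* = \min_{v\in\G}\int c\, dv$ established just before the theorem, and then exhibit a stationary randomized policy together with an initial distribution whose expected occupation measure is exactly $v^*$, so that its time-average cost matches the lower bound. Compactness of $\G$ from Proposition~\ref{weakLimitTheorem}(c), together with the continuity of $c$ on $\S\times\Q_c$ (Lemma~\ref{continuityofc}) and the fact that every $v\in\G$ is supported on $\S\times\Q_c$, already guarantees a minimizer $v^*\in\G$ with $\int c\,dv^* = c^*$, so the real work is the ``inverse'' step of realizing $v^*$ as the stationary law of $(\pi_t,Q_t)$ under some $\Pi^*\in\bar{\Pi}^C_{W,S}$.

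I would disintegrate $v^*(d\pi\,dQ)=\hat{v}^*(d\pi)\,\bar{\eta}^*(dQ|\pi)$, where $\hat{v}^*$ is the $\P(\R^d)$-marginal of $v^*$ and $\bar{\eta}^*$ is a measurable stochastic kernel obtained from standard disintegration on Polish spaces. Since $v^*$ is supported on $\S\times\Q_c$, $\bar{\eta}^*(\,\cdot\,|\pi)$ is $\Q_c$-valued for $\hat{v}^*$-a.e.\ $\pi$, and so the time-homogeneous choice $\bar{\eta}_t=\bar{\eta}^*$ for all $t$ defines a randomized stationary Walrand-Varaiya policy $\Pi^*$ in $\bar{\Pi}^C_{W,S}$. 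For the initial condition I would let the shared randomization produce a random measure $\pi_0^*\sim\hat{v}^*$ at time $0$ and then sample $x_0\sim\pi_0^*$, so that $(\pi_0,Q_0)$ has joint law $v^*$.

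By identity \eqref{eq_vinv}, $v^*$ is invariant for the transition kernel $P^{\Pi^*}$, hence $(\pi_t,Q_t)\sim v^*$ for every $t\ge 0$ and
\[
E^{\Pi^*}_{\pi_0^*}\biggl[\frac{1}{T}\sum_{t=0}^{T-1}c(\pi_t,Q_t)\biggr] = \int_{\P(\R^d)\times\Q_c} c(\pi,Q)\,v^*(d\pi\,dQ) = c^*
\]
for every $T\ge 1$, so the limit on the left side of the theorem equals $c^*$. Combined with the inequality
\[
c^* \le \liminf_{T\to\infty}E_{\delta_{x_0}}^{\Pi}\biggl[\frac{1}{T}\sum_{t=0}^{T-1}c(\pi_t,Q_t)\biggr]
\]
valid for all $x_0\in\R^d$ and all $\Pi\in\bar{\Pi}^C_W$ (derived in the discussion preceding the theorem from Proposition~\ref{weakLimitTheorem}(a)--(c)), this yields the stated inequality.

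The main obstacle is the tension between the formalism, in which $\pi_0$ denotes a single deterministic element of $\P(\R^d)$ playing the role of the prior on $x_0$, and the convex-analytic construction, which naturally produces a probability measure $\hat{v}^*$ \emph{on} $\P(\R^d)$ rather than a single point of it. The resolution, consistent with the paper's explicit allowance of randomization shared between encoder and decoder, is to use a common random variable at time $0$ to sample $\pi_0^*$ from $\hat{v}^*$; both encoder and decoder then agree on $\pi_0^*$ and the filtering recursion \eqref{filtre} proceeds verbatim. Checking measurability of $\bar{\eta}^*$ and preservation of the convex-codecell property under disintegration is routine given the topological structure on $\Q_c$ developed in Section~\ref{SectionSpaceQuant}, but the legitimacy of this randomized initialization is the one step deserving explicit care.
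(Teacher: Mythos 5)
Your construction of $\Pi^*$ from the minimizing ergodic occupation measure $v^*$ and the lower bound $c^*\le \liminf_T E^{\Pi}_{\delta_{x_0}}[\cdots]$ match the paper exactly, but the final step deviates in a way that leaves a genuine gap. The theorem asserts the existence of a single, deterministic initial distribution $\pi_0^*\in\P(\R^d)$ (a fixed common prior on $x_0$) for which $\lim_{T\to\infty}E^{\Pi^*}_{\pi_0^*}\bigl[\frac{1}{T}\sum_{t=0}^{T-1}c(\pi_t,Q_t)\bigr]$ \emph{exists} and is at most $c^*$. Your resolution --- sampling $\pi_0^*\sim\hat{v}^*$ via shared randomization at time $0$ --- establishes instead that $E^{\Pi^*}_{\hat{v}^*}\bigl[\frac{1}{T}\sum_{t}c(\pi_t,Q_t)\bigr]=c^*$ for all $T$, i.e., exactly the identity \eqref{eq_vstar}. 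That is a statement about the average over $\pi_0$, not about any particular $\pi_0$; it is the content of the ``fictitious'' randomized-initialization scheme that the paper treats separately in Theorem~\ref{deterministicOptimal2}, and it does not by itself yield the claimed inequality for a fixed $\pi_0^*$.

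The missing ingredient is the individual ergodic theorem for the Markov chain $\{(\pi_t,Q_t)\}$ with invariant measure $v^*$: it guarantees that $f(\pi_0)\coloneqq\lim_{T\to\infty}E^{\Pi^*}_{\pi_0}\bigl[\frac{1}{T}\sum_{t=0}^{T-1}c(\pi_t,Q_t)\bigr]$ exists for $\hat{v}^*$-a.e.\ $\pi_0$ and that $\int f\,d\hat{v}^*=c^*$, whence some $\pi_0$ in the support of $\hat{v}^*$ satisfies $f(\pi_0)\le c^*$ and can be taken as $\pi_0^*$. Note that a Fatou-type argument applied to your identity would only produce a $\pi_0$ with $\liminf_T E^{\Pi^*}_{\pi_0}[\cdots]\le c^*$, which is not enough because the existence of the limit is part of the theorem's claim. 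So either you must invoke the ergodic theorem to complete the proof of Theorem~\ref{StationaryOptimal} as stated, or you are in fact proving the modified statement of Theorem~\ref{deterministicOptimal2}, where the randomized initialization is made explicit in the hypotheses.
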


\begin{proof} We must prove the existence of $\Pi^* \in \bar{\Pi}^C_{W,S}$ which
  achieves infinite horizon cost $c^*$ for some initial distribution $\pi_0^*$.
  Consider $v^*$ achieving the minimum in \eqref{eq_optvstar}, disintegrate it
  as $v^*(d\pi\, dQ) = \hat{v}^*(d\pi)\bar{\eta}^*(dQ|\pi)$, and let $\Pi^* \in
  \bar{\Pi}^C_{W,S}$ be the policy corresponding to $\bar{\eta}^*$. Since $v^*$
  is an invariant measure for the transition kernel $P^{\Pi^*}$ (see
  \eqref{eq_vinv}), for any $T\ge 1$,
\[
c^* =
E_{\hat{v}^*}^{\Pi^*} \biggl[ \frac{1}{T} \sum_{t=0}^{T-1}c(\pi_t,Q_t)\biggr],
\]
where the notation $E_{\hat{v}^*}^{\Pi^*}$ signifies that the initial
distribution $\pi_0$ is picked randomly with distribution $\hat{v}^*$.
Thus
\begin{equation}
  \label{eq_vstar}
c^* = \lim_{T \to \infty}E_{\hat{v}^*}^{\Pi^*} \biggl[ \frac{1}{T}
\sum_{t=0}^{T-1}c(\pi_t,Q_t)\biggr].
\end{equation}
From  the individual ergodic theorem (see
\cite{HernandezLermaLasserre}) the limit
\[
f(\pi_0)\coloneqq \lim_{T \to \infty}E_{\pi_0}^{\Pi^*} \biggl[ \frac{1}{T}
\sum_{t=0}^{T-1}c(\pi_t,Q_t)\biggr]
\]
exists for $\hat{v}^*$-a.e.\ $\pi_0$ and
\[
 \int_{\P(\R^d)} f(\pi_0)\hat{v}^*(d\pi_0) =c^*.
\]
Hence for some $\pi_0$ in the support of $\hat{v}^*$ we must have
\[
\lim_{T \to \infty}E_{\pi_0}^{\Pi^*} \biggl[ \frac{1}{T}
\sum_{t=0}^{T-1}c(\pi_t,Q_t)\biggr]\le c^*
\]
Any such $\pi_0$ can be picked as $\pi_0^*$
so that the claim of the theorem holds.
\end{proof}

In the preceding theorem the initial state distribution $\pi_0$ is a design
parameter which is chosen along with the quantization policy to
optimize the cost. This assumption may be unrealistic. However, consider the
fictitious optimal stationary policy in \eqref{eq_vstar} which is allowed to
pick the initial distribution $\pi_0$ according to $\hat{v}^*$. It follows from
the analysis in the proof of Proposition~\ref{weakLimitTheorem} (see
\eqref{eq_vinariant}) that the expectation of $\pi_0$ according to $\hat{v}^*$
is precisely the invariant distribution $\pi^*$ for $\{x_t\}$.  Based on this,
one can prove the following, more realistic version of the optimality
result. The proof, which is not given here, is an expanded  and more
refined version of the proof of
Theorem~\ref{StationaryOptimal}.

\begin{theorem}\label{deterministicOptimal2} Under the setup of
  Theorem~\ref{StationaryOptimal}, assume that $\{x_t\}$ is started from the
  invariant distribution $\pi^*$. If the optimal stationary policy $\Pi^* \in
  \bar{\Pi}^C_{W,S}$ is used in such a way that the encoder and decoder's
  initial belief $\pi_0$ is picked randomly according to $\hat{v}^*$ \emph{(}but
  independently of $\{x_t\}\, $\emph{)}, then $\Pi^*$ is still optimal in the
  sense of Theorem~\ref{StationaryOptimal}.
\end{theorem}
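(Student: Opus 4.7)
The proof refines the argument of Theorem \ref{StationaryOptimal} to handle the decoupling between the true state $x_0 \sim \pi^*$ and the encoder-decoder's initial belief $\pi_0 \sim \hat{v}^*$; they are no longer coupled by $x_0 | \pi_0 \sim \pi_0$. My plan has three steps.

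\emph{Step 1 (Mean-consistency).} First I would verify the identity $\pi^*(A) = \int \pi(A)\, \hat{v}^*(d\pi)$ for every Borel $A \subset \R^d$ that is claimed in the paragraph preceding the theorem. Applying the invariance relation \eqref{eq_vinv} with test function $g(\pi, Q) = \pi(A)$ reduces its right-hand side to $\iint \bigl(\int \pi(dx) P(A|x)\bigr) v^*(d\pi\, dQ) = \int_{\R^d} P(A | x)\, \tilde{\pi}(dx)$, where $\tilde{\pi}(A) := \int \pi(A)\, \hat{v}^*(d\pi)$. Hence $\tilde{\pi}$ is invariant under the transition kernel of $\{x_t\}$, and uniqueness of the invariant measure (Assumption \ref{AssumptionB}) gives $\tilde{\pi} = \pi^*$.

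\emph{Step 2 (Joint chain).} Let $Z_t := (x_t, \pi_t, Q_t)$ under the stationary policy $\Pi^*$; its transition kernel is insensitive to the initial joint law of $(x_0, \pi_0)$. In the nominal setup of Theorem \ref{StationaryOptimal} (with $x_0 | \pi_0 \sim \pi_0$), the initial distribution $\mu^*(dx_0\, d\pi_0\, dQ_0) = \hat{v}^*(d\pi_0)\, \pi_0(dx_0)\, \bar{\eta}^*(dQ_0 | \pi_0)$ is stationary for $\{Z_t\}$ (the filtering identity \eqref{filtre} gives $x_1 | \pi_1 \sim \pi_1$, propagating the coupling), and its $(\pi, Q)$-marginal is $v^*$, so $\int c\, d\mu^* = c^*$. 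In the real setup ($x_0 \sim \pi^*$ independent of $\pi_0 \sim \hat{v}^*$), the initial joint has the same $x_0$- and $\pi_0$-marginals as $\mu^*$, by Step 1 and by construction respectively, but the two couplings differ.

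\emph{Step 3 (Ces\`aro equivalence and the main obstacle).} Define $f(\pi_0, x_0) := \lim_{T\to\infty} E^{\Pi^*}_{(\pi_0, x_0)}\bigl[\frac{1}{T}\sum_{t=0}^{T-1} c(\pi_t, Q_t)\bigr]$ wherever the limit exists. The crucial claim is that $f(\pi_0, x_0)$ depends only on $\pi_0$ up to $\hat{v}^* \otimes \pi^*$-null sets; granted this, both the nominal and real long-run costs equal $\int f(\pi_0)\, \hat{v}^*(d\pi_0) = c^*$, the nominal value being provided by Theorem \ref{StationaryOptimal}'s proof. The main obstacle is justifying this $x_0$-independence of $f$: although $P^{\Pi^*}(x_t \in \cdot | x_0) \to \pi^*$ in total variation by Harris recurrence of $\{x_t\}$, the filter $\pi_t$ is a deterministic function of $(\pi_0, q_{[0,t-1]})$ and so indirectly depends on $x_0$ through the observed messages. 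Decoupling this in the Ces\`aro limit requires either establishing Harris-type recurrence of $\{Z_t\}$ on $\R^d \times \S \times \Q_c$, or constructing a coupling showing that the total-variation distance between the real and nominal time-$t$ marginals of $(\pi_t, Q_t)$ vanishes. Continuity and boundedness of $c$ on $\S \times \Q_c$ (Lemma \ref{continuityofc} and Assumption \ref{AssumptionACompactness}(i)) then upgrade such convergence to convergence of $E[c(\pi_t, Q_t)]$, and hence of the Ces\`aro averages, to $c^*$.
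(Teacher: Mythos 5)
Your Step 1 is correct and is precisely the computation the paper records as \eqref{eq_vinariant}: applying the invariance relation to $g(\pi,Q)=\pi(A)$ and using \eqref{eq_inv1} shows that $\tilde{\pi}=\int \pi(\cdot)\,\hat{v}^*(d\pi)$ is invariant for $\{x_t\}$, hence equals $\pi^*$ (modulo the technicality, which the paper handles, that $\pi\mapsto\pi(A)$ is only continuous on $\S$ for $A$ with Lebesgue-null boundary, so one must restrict to such $A$ and then invoke a separating-class argument). Step 2's identification of the joint chain $Z_t=(x_t,\pi_t,Q_t)$ and of the fact that the nominal and real setups differ only in the coupling of the initial pair $(x_0,\pi_0)$ is also the right framing. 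Note, however, that the paper does not actually supply a proof of this theorem --- it states only that the proof is an ``expanded and more refined version'' of that of Theorem~\ref{StationaryOptimal} --- so there is no written argument to compare your Step 3 against.

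That said, your proposal as written has a genuine gap, and you identify it yourself: the entire content of the theorem is the claim in Step 3 that the Ces\`aro limit of $E[c(\pi_t,Q_t)]$ is insensitive to replacing the consistent coupling $x_0\,|\,\pi_0\sim\pi_0$ by the product coupling $\pi^*\otimes\hat{v}^*$, and you do not establish it. The difficulty is real: under the product coupling the filter $\pi_t$ computed via \eqref{filtre} from $\pi_0$ is \emph{mismatched} (it is no longer the true posterior of $x_t$), so $P(q_t=m\,|\,\text{history})\neq\pi_t(Q_t^{-1}(m))$ and $\{(\pi_t,Q_t)\}$ is not a Markov chain with kernel $P^{\Pi^*}$ in the real setup; one cannot simply reuse the invariance of $v^*$. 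Neither of the two routes you name is carried out: Harris recurrence of the joint chain $Z_t$ on $\R^d\times\S\times\Q_c$ is not known here (and is the kind of filter-stability statement that is typically hard), and mutual absolute continuity of the two path laws (which does hold, since $\pi_0\in\S$ and $\pi^*$ both have positive densities) guarantees that the a.s.\ Ces\`aro limit exists under the real coupling but does \emph{not} by itself imply that its expectation is unchanged, since the expectations are taken under different measures. Without a merging or ergodic-decomposition argument closing this loop, the proposal is an accurate diagnosis of what must be proved rather than a proof.
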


\begin{remark}  We have not shown that an optimal stationary policy is
  deterministic. In the convex analytic approach, the existence of an optimal
  deterministic stationary policy directly follows if one can show that the
  extreme points of ergodic occupation measures satisfy the following: (i) They
  are induced by deterministic policies; and (ii) under these policies the state
  invariant measures are ergodic. This property of the extreme points of the set
  of ergodic occupation measures has been proved  by Meyn in
  \cite[Proposition~9.2.5]{CTCN} for countable state spaces and by Borkar in
  \cite{Borkar} and \cite{ABG} for a specific case involving $\R^d$ as
  the state space and a non-degeneracy condition which amounts to having a
  density assumption on the one-stage transition kernels. Unfortunately, these
  approaches do not seem to apply in our setting.
\end{remark}

\subsubsection{The quadratic cost case}
In the infinite horizon setting for the important case of the (unbounded)
quadratic cost function, we add the following assumption, in addition to
Assumption~\ref{AssumptionA'}.
\begin{assumption}\label{AssumptionB'}
The chain $\{x_t\}$ is positive Harris recurrent with unique invariant measure
$\pi^*$ such that  for some $\epsilon > 0$ and  all $x_0\in \R^d$,
\[
\lim_{t \to \infty} E_{\delta_{x_0}}\bigl[\|x_t\|^{2+\epsilon}\bigr] =
\int_{\R^d} \|x\|^{2+\epsilon} \pi^*(dx)< \infty.
\]
\end{assumption}

\begin{remark}  A sufficient condition for Assumption~\ref{AssumptionB'} to
  hold is that $f$ in (\ref{sourceChannelModel11}) satisfies $\|f(x,w)\|\le
  K\bigl(\|x\|+\|w\|\bigr)$ for some $K<1$ and $w_t$ has zero mean and finite
  $(2+\epsilon)$th
  moment $E\bigl[\|w_t\|^{(2+\epsilon)}\bigr] <\infty$. In particular, the
  assumption holds  for the
  LQG case $x_{t+1}=Ax_t + w_t$, with $A$ being a $d\times d$ matrix having
  eigenvalues of absolute value less than $1$ and $w_t$ having a nondegenerate
  Gaussian distribution with zero mean.
\end{remark}

\begin{theorem}\label{StationaryOptimalQuadratic}
  Under Assumptions \ref{AssumptionA'} and \ref{AssumptionB'}, there exists a stationary policy $\Pi^*$  in
  $\bar{\Pi}^C_{W,S}$ that is optimal in the sense that with an appropriately
  chosen initial distribution $\pi_0^*$,
\[
\lim_{T \to \infty} E_{\pi_0^*}^{\Pi^*}  \biggl[\frac{1}{T}
\sum_{t=0}^{T-1} c(\pi_t,Q_t) \biggr] \le  \liminf_{T \to \infty} E_{\delta_{x_0}}^{\Pi}  \biggl[\frac{1}{T}
\sum_{t=0}^{T-1} c(\pi_t,Q_t) \biggr]
\]
for any $x_0\in \R^d$ and $\Pi\in \bar{\Pi}^C_{W}$. Furthermore, if $\{x_t\}$ is
started from the invariant distribution $\pi^*$ and  the optimal stationary
policy $\Pi^* \in \bar{\Pi}^C_{W,S}$ is used in such a way that the encoder and
decoder's initial belief $\pi_0$ is picked randomly according to $\hat{v}^*$
\emph{(}but independently of $\{x_t\}\, $\emph{)}, then $\Pi^*$ is still optimal
in the above sense (with $\pi^*$ replacing $\pi_0^*$).
\end{theorem}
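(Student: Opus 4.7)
The plan is to follow the convex-analytic approach used to prove Theorem~\ref{StationaryOptimal}, adapting each step to accommodate the unbounded quadratic cost. That is, I will again work with the expected occupation measures $v_t \in \P(\P(\R^d)\times \Q_c)$ induced by $\Pi\in \bar{\Pi}^C_W$ starting from $\delta_{x_0}$, the set $\G$ of ergodic occupation measures, and seek to minimize $v \mapsto \int c(\pi,Q)\,v(d\pi\,dQ)$ over $\G$. Once a minimizer $v^*$ is obtained, its disintegration $v^*(d\pi\,dQ)=\hat{v}^*(d\pi)\bar{\eta}^*(dQ|\pi)$ gives the stationary randomized policy $\Pi^*$, and the two assertions of the theorem follow exactly as in the proofs of Theorems~\ref{StationaryOptimal} and \ref{deterministicOptimal2} via the individual ergodic theorem and the identification of the $\pi$-marginal of $\hat{v}^*$ with~$\pi^*$.

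To make this machinery run, I would reprove the three parts of Proposition~\ref{weakLimitTheorem} in the unbounded-cost setting. For (b), relative compactness of $\{v_t\}$: since $\Q_c$ is compact and $\S$ is closed in $\P(\R^d)$, I only need tightness of the $\pi$-marginals, which I would obtain by showing $E_{\delta_{x_0}}^{\Pi}\bigl[\int \|x\|^{2+\epsilon}\pi_t(dx)\bigr] = E_{\delta_{x_0}}^{\Pi}[\|x_t\|^{2+\epsilon}]$ is uniformly bounded in $t$, using Assumption~\ref{AssumptionB'} combined with the linear-growth bound $\|f(x,w)\|\le K(\|x\|+\|w\|)$ from Assumption~\ref{AssumptionA'}(i) (and the fact that these moment bounds do not depend on the quantization policy). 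Parts (a) invariance of weak limits under the transition kernel and (c) closedness of $\G$, are essentially algebraic/topological and carry over directly, since the transition kernel $P(d\pi_{t+1}|\pi_t,Q_t)$ inherited from the filtering equation does not involve the cost.

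The main obstacle is controlling the cost functional under weak convergence, because $c(\pi,Q)$ is no longer bounded. The key observation is that, since $\sU=\R^d$ and the reconstructions may be chosen freely, for any $u\in\R^d$
\begin{equation*}
c(\pi,Q) \;\le\; \int_{\R^d} \|x-u\|^2\, \pi(dx)
\;\le\; 2\int \|x\|^2 \pi(dx) + 2\|u\|^2,
\end{equation*}
so in particular $c(\pi,Q) \le \int \|x\|^2\pi(dx)$ by choosing $u=0$. Applying Jensen's inequality with exponent $1+\epsilon/2$,
\begin{equation*}
E^{\Pi}_{\delta_{x_0}}\bigl[c(\pi_t,Q_t)^{1+\epsilon/2}\bigr]
\;\le\; E^{\Pi}_{\delta_{x_0}}\bigl[\|x_t\|^{2+\epsilon}\bigr],
\end{equation*}
and the right side is uniformly bounded in $t$ by Assumption~\ref{AssumptionB'}. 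This yields uniform $(1+\epsilon/2)$-integrability of $c(\pi_t,Q_t)$ with respect to the expected occupation measures, which together with continuity of $c$ on $\S\times\Q_c$ (Lemma~\ref{continuityofc}) upgrades weak convergence $v_{t_n}\to\bar{v}$ to convergence of the integrals $\int c\,dv_{t_n}\to\int c\,d\bar{v}$, giving the analogue of \eqref{eq_cvtconv}. Simultaneously, this uniform integrability shows that the cost functional is finite and lower semicontinuous on $\G$, which together with compactness of $\G$ guarantees attainment of the minimum at some $v^*\in\G$.

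With $v^*$ in hand, the remainder mirrors Theorems~\ref{StationaryOptimal} and~\ref{deterministicOptimal2}: disintegrate $v^*$ to extract the stationary kernel $\bar{\eta}^*$, use invariance \eqref{eq_vinv} to obtain $c^* = E^{\Pi^*}_{\hat{v}^*}\bigl[\frac{1}{T}\sum_{t=0}^{T-1}c(\pi_t,Q_t)\bigr]$ for every $T$, and apply the individual ergodic theorem (now in $L^1$, justified by the uniform integrability above) to identify an initial $\pi_0^*$ in the support of $\hat{v}^*$ achieving the infinite-horizon cost $c^*$. The second assertion, concerning starting $\{x_t\}$ from $\pi^*$ with independent randomized initial belief $\sim\hat{v}^*$, follows from the same identification as in Theorem~\ref{deterministicOptimal2} of the $x$-marginal of $\hat{v}^*$ with the invariant distribution $\pi^*$.
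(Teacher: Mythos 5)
Your proposal is correct and follows essentially the same route as the paper: the proof there is a one-paragraph reduction to Theorems~\ref{StationaryOptimal} and \ref{deterministicOptimal2}, using Assumption~\ref{AssumptionB'} to get the uniform $(2+\epsilon)$-moment bounds on the occupation measures (the analogues of \eqref{eq_vttight} and \eqref{eq_moments}), exactly as you do via $c(\pi,Q)\le \int \|x\|^2\pi(dx)$ and Jensen. One small correction: for quadratic cost the relevant continuity statement is not Lemma~\ref{continuityofc} (which assumes bounded $c_0$) but Lemma~\ref{lem_cconv}/Lemma~\ref{ContinuityofOptimalValue1}, i.e., continuity of $c$ only along uniformly integrable sequences --- which is precisely what your moment bound supplies, so the argument goes through unchanged.
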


\begin{proof} The proof is almost identical to that of
  Theorems~\ref{StationaryOptimal} and \ref{deterministicOptimal2}, with the
  following minor adjustments, which are needed to accommodate the unboundedness
  of the quadratic cost function. This modification is facilitated by
  Assumption~\ref{AssumptionB'} which implies that, similar to
  \eqref{eq_vttight} and \eqref{eq_moments} in the proof of
  Proposition~\ref{weakLimitTheorem}, for the sequence  of
  expected occupation measures $\{v_t\}$ corresponding to any initial distribution
  $\delta_{x_0}$, we have
\[
\sup_{t\ge 0}  \int_{\P(\R^d)\times \Q_c}  \biggl( \int_{\R^d}  \|x\|^{2+\epsilon}\pi(dx)
\biggr) v_t(d\pi\, dQ) <\infty,
\]
as well as for all $v\in \G$,
\begin{eqnarray*}
  \lefteqn{ \int_{P(\R^d)\times \Q_c} \biggl(\int_{\R^d}
\|x\|^{2+\epsilon}\pi(dx) \biggl) v(d\pi\, dQ)  }\qquad \qquad  \\
&= & \int_{\R^d} \|x\|^{2+\epsilon}\pi^*(dx) <\infty.
\end{eqnarray*}
These uniform integrability properties of $\{v_t\}$ and $\G$ allow us to  use the
continuity result Lemma~\ref{ContinuityofOptimalValue1} for $c(\pi,Q)$. All
other parts of the proof remain unchanged. \end{proof}

\section{Concluding remarks}
\label{sec_conclusions}

In this paper we established structural and existence results concerning optimal
quantization policies for Markov sources. The key ingredient of our analysis was
the characterization of quantizers as a subset of the space of stochastic
kernels.  This approach allows one to introduce a useful topology with respect
to which the set of quantizers with a given number of convex codecells is
compact, facilitating the proof of existence results. We note that both our
assumption of convex-codecell quantizers and the more restrictive assumption of
nearest neighbor-type quantizers in Borkar \emph{et al$.$}
\cite{BorkarMitterTatikonda} may preclude global optimality over all zero-delay 
quantization policies. The existence and finer structural characterization of
such globally optimal policies are still open problems.

The existence and the structural results can be useful for the design of
networked control systems where decision makers have imperfect observation of a
plant to be controlled. The machinery presented here is particularly useful in
the context of optimal quantized control of a linear system driven by unbounded noise: For
LQG optimal control
problems  it has been shown that the effect of the control policies can be
decoupled from the estimation error and the design results here can be used to
establish existence of optimal quantization and control policies for LQG
systems.

The approach developed in this paper can also be applied to the case where
$\{x_t\}$ is a Markov chain with finite state space $\sX$. In this case stronger
results can be obtained with significantly less technical complications. In
particular, when the state space is finite one does not need the convex codecell
assumption since there are only a finite number of $M$-level quantizers on
$\sX$. Also, the global optimality of Walrand-Varaiya type policies for the
infinite horizon discounted cost problem can be easily proved if $\sX$ is
finite.  In addition, one can prove the optimality of \emph{deterministic}
stationary policies for the average cost problem 
under the irreducibility condition  $P(x_{t+1}=b|x_t=a) > 0$ for all
$a, b \in \sX$. Similar to \cite{BorkarMitterTatikonda}, such an optimality
result follows from a \emph{vanishing discount} argument (see, e.g.,
\cite[Theorem 5.2.4]{HernandezLermaMCP}) using arguments similar to Lemmas~4.1
and 4.2 in \cite{BorkarMitterTatikonda} and the fact that the filtering process
forgets its initial state exponentially fast under the irreducibility condition.

A further research direction is the formulation of the communication problem
over a channel with feedback. The tools and the topological
analysis developed in this paper could be useful in establishing optimal coding
and decoding policies  and the derivation of error-exponents with
feedback. Relevant efforts in the literature on this topic include
\cite{TatikondaMitter}.

\section{Appendix}

\subsection{Auxiliary results}\label{ProofOptQuantizerA}

Recall that a sequence of probability measures $\{\mu_n\}$ in $\P(\sX)$ converges to $\mu\in \mathcal{P}(\sX)$ \emph{weakly} if $
\int_{\sX} c(x) \mu_n(dx) \to \int_{\sX}c(x) \mu(dx)$ for
every continuous and bounded $c: \sX \to \R$.
For  $\mu, \nu \in \P(\sX$)
the \emph{total variation} metric is defined by
\begin{eqnarray}
\!\!\!\!\! d_{TV}(\mu,\nu)\!\!\!\! &\coloneqq & \!\!\!\!2 \sup_{B \in \B(\sX))}
|\mu(B)-\nu(B)|\nonumber  \\* 
 &=&  \!\!\!\! \!\!\! \!\sup_{g: \, \|g\|_{\infty} \leq 1} \bigg| \int
g(x)\mu(dx) -\!\! \int g(x)\nu(dx) \bigg|,
\label{TValternative}
\end{eqnarray}
where the second supremum is over all measurable real functions $g$ such that
$\|g\|_{\infty} \coloneqq \sup_{x \in \sX} |g(x)|\le 1$.

\begin{definition}[\cite{YukLinSIAM2010}] \label{defnConvPaper} Let $P\in
  \P(\R^d)$. A quantizer sequence $\{Q_n\}$ converges to $Q$ weakly at $P$
  \emph{(}$Q_n \to Q$ weakly at $P$\emph{)} if $PQ_n \to PQ$ weakly. Similarly,
  $\{Q_n\}$ converges to $Q$ in total variation at $P$ \emph{(}$Q_n \to Q$ in total
  variation at $P$\emph{)} if $PQ_n \to PQ$ in total variation.
\end{definition}

The following lemma will be very useful in the upcoming optimality proofs.

\pagebreak[2]
\begin{lemma}\label{keyTechnicalLemma}
\begin{itemize}
\item[\em (a)] Let $\{\mu_n\}$ be a sequence of probability density functions on
  $\R^d$  which are  uniformly equicontinuous and uniformly
  bounded  and assume $\mu_n\to \mu$ weakly. Then $\mu_n\to \mu$  in
  total variation.

\item[\em (b)] Let $\{Q_n\}$ be a sequence in $\mathcal{Q}_c$ such that $Q_n \to
  Q$ weakly at $P$ for some $Q\in \mathcal{Q}_c$. If $P$ admits a density, then
  $Q_n \to Q$ in total variation at $P$. If the density of  $P$ is positive, then
  $Q_n \to Q$ in total variation at \emph{any} $P'$ admitting a density.

\item[\em (c)]  Let $\{Q_n\}$ be a sequence in $\mathcal{Q}_c$ such that $Q_n \to
  Q$ weakly at $P$ for some $Q\in \mathcal{Q}_c$ where  $P$ admits a positive
  density.  Suppose
  further that $P'_n \to P'$ in total variation where $P'$ admits a
  density. Then $P'_nQ_n \to P'Q$
  in total variation.

\end{itemize}
\end{lemma}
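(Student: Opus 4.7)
The plan is to prove the three parts in sequence, with (a) as the short Scheffé/Arzelà--Ascoli case, (b) as the geometric heart of the lemma, and (c) as a quick consequence of (b) via a contraction/triangle inequality.

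For part (a) the plan is first to produce a continuous bounded density for the weak limit $\mu$. Uniform equicontinuity and uniform boundedness of $\{\mu_n\}$, together with Arzelà--Ascoli and a diagonalization over an exhaustion of $\R^d$ by compacta, yield that every subsequence admits a further subsequence converging locally uniformly to a nonnegative continuous function $m$ inheriting the same uniform bound. Testing against compactly supported continuous $\phi$, dominated convergence gives $\int \phi \mu_n \, dx \to \int \phi m \, dx$, while weak convergence gives $\int \phi \, d\mu_n \to \int \phi \, d\mu$; hence $\mu$ admits $m$ as density. Uniqueness of this density propagates the convergence to the entire sequence, and Scheffé's theorem (probability densities converging pointwise) upgrades pointwise convergence to $L^1$, which is exactly total variation.

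For part (b) the main difficulty is to convert weak convergence of the joint measures $PQ_n$ on $\R^d \times \sM$ into Lebesgue-a.e.\ pointwise convergence of the indicators of the convex cells. The plan is to use the compactness of $\mathcal{Q}_c$ in the quotient topology (Lemma~\ref{lemma_sqconv}) together with the parametric representation in Remark~\ref{rem_qc}(ii): every subsequence of $\{Q_n\}$ admits a sub-subsequence $\{Q_{n_k}\}$ whose defining hyperplanes converge in the appropriately compactified parameter space (hyperplanes escaping to infinity simply drop out as vacuous constraints), yielding limit cells $B_i^*$. Away from the finite union of limiting hyperplanes (a Lebesgue-null set) one has $1_{B_i^{n_k}}(x) \to 1_{B_i^*}(x)$. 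Uniqueness of the weak limit at $P$ forces $PQ^* = PQ$, and since $P$ has a density this yields $p(x)\,1_{B_i^*}(x) = p(x)\,1_{B_i}(x)$ Lebesgue-a.e. Dominated convergence with dominator $p$ then gives $\int p\,|1_{B_i^{n_k}} - 1_{B_i}| \, dx \to 0$, i.e.\ $P(B_i^{n_k} \triangle B_i) \to 0$; summing in $i$ produces $d_{TV}(PQ_{n_k}, PQ) \to 0$, and since every subsequence has such a sub-subsequence with the common limit $PQ$, the whole sequence converges in total variation at $P$. The ``any $P'$'' assertion follows by the same route: when $p$ is strictly positive, the quotient identification rigidifies the cells up to Lebesgue-null sets, so $1_{B_i^{n_k}} \to 1_{B_i}$ Lebesgue-a.e., and DCT with dominator $p'$ applies to any $P'$ admitting density~$p'$.

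For part (c) the plan is a triangle inequality,
\[
d_{TV}(P'_n Q_n, P'Q) \le d_{TV}(P'_n Q_n, P'Q_n) + d_{TV}(P'Q_n, P'Q).
\]
Any measurable $C \subset \R^d \times \sM$ decomposes as $\bigcup_i A_i \times \{i\}$, so $|(P'_n Q_n)(C) - (P' Q_n)(C)| \le \sum_i |P'_n - P'|(A_i \cap B_i^n) \le d_{TV}(P'_n, P')$, which vanishes by hypothesis; the kernel operation contracts total variation. The second term vanishes by the ``any $P'$'' clause of (b) since $P$ has a positive density. I expect the principal technical obstacle to lie in the geometric step in (b), namely extracting Lebesgue-a.e.\ pointwise convergence of the indicator functions from the quotient convergence of convex-cell quantizers; this is where compactness of $\mathcal{Q}_c$ combined with the parametric hyperplane description does the essential work.
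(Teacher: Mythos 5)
Your proposal is correct, and parts (a) and (c) follow essentially the same route as the paper: Arzel\`a--Ascoli plus diagonalization plus Scheff\'e and a sub-subsequence argument for (a), and the triangle inequality together with the fact that applying a fixed kernel contracts total variation for (c). Part (b) is where you genuinely diverge. The paper first invokes the inequality $d_{TV}(PQ_n,PQ)\le \sum_i P(B_i^n\bigtriangleup B_i)$ and then obtains $P(B_i^n\bigtriangleup B_i)\to 0$ purely from the Portmanteau theorem: since the cells of $Q$ are convex, each $\partial B_i$ is Lebesgue-null, hence $PQ$-null when $P$ has a density, so weak convergence of $PQ_n$ gives $P(B_i\cap B_j^n)\to P(B_i\cap B_j)$ directly, and the partition property finishes the job; the extension to arbitrary $P'$ with a density is then just absolute continuity of $P'$ with respect to a positive-density $P$. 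You instead extract, via compactness of the hyperplane parametrization of Remark~\ref{rem_qc}(ii), a sub-subsequence along which the indicators of the cells converge Lebesgue-a.e.\ to those of a limit quantizer, identify that limit with $Q$ through uniqueness of the weak limit, and apply dominated convergence. Your route is sound but carries extra bookkeeping that the paper avoids: you must compactify the offset parameter $b$ (treating hyperplanes escaping to infinity as vacuous or empty constraints), and you must handle the Lebesgue-null discrepancy between a cell $B_i^n$ and the polytope $\bar{B}_i^n=\bigcap_{j\ne i}H^n_{i,j}$, since the pointwise convergence you get is really for the indicators of the latter; both issues are resolvable (countable unions of null sets are null, and the final statement only involves measures with densities), but they should be stated. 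What your approach buys is independence from the Portmanteau characterization and a concrete geometric picture of the limit; what the paper's buys is brevity, using convexity only through the null-boundary property.
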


\smallskip \emph{Proof.} (a) We will denote a density and its induced
probability measure by the same symbol. By the Arzel\`a-Ascoli theorem
 the sequence of densities $\{\mu_n\}$, when restricted to a given
compact subset of $\R^d$, is relatively compact with respect to the supremum
norm. Considering the sequence of increasing closed balls $K_i=\{x:\|x\|\le i\}$
of radius $i=1,2,\ldots$, one can use Cantor's diagonal argument as in
\cite[Lemma 4.3]{YukLinSIAM2010} to obtain a subsequence $\{\mu_{n_k}\}$ and a
nonnegative continuous function $\hat{\mu}$ such that $\mu_{n_k}(x)\to
\hat{\mu}(x)$ for all $x$, where the convergence is uniform over compact
sets. Since $\int_B | \mu_{n_k}(x) -\hat{\mu}(x)|\, dx \to 0 $ for any bounded
Borel set $B$, and since $\{\mu_n\}$ is tight by weak convergence, it follows
that $\hat{\mu}$ is a probability density. Since $\mu_{n_k}$ converges to
$\hat{\mu}$ pointwise, by Scheffe's theorem \cite{Bil86} $\mu_{n_k}$ converges
to $\hat{\mu}$ in the $L_1$ norm, which is equivalent to convergence in total
variation.  Since $\mu_n\to \mu$ weakly, we must have $\mu=\hat{\mu}$.

The preceding argument implies that any subsequence of $\{\mu_n\}$ has a further
subsequence that converges to $\mu$ in (the metric of) total variation. This
implies that $\mu_n\to \mu$ in total variation.

(b)
It was shown in the proof of Theorem~5.7 of  \cite{YukLinSIAM2010} that
\begin{eqnarray}
d_{TV}(PQ_n,PQ)  &\le &  \sum_{i=1}^M P(B^n_i \bigtriangleup B_i), \label{diff_conv}
\end{eqnarray}
where $B^n_1,\ldots,B^n_M$ and $B_1,\ldots,B_M$ are the cells of $Q_n$
and $Q$, respectively, and $B^n_i \bigtriangleup B_i\coloneqq (B_i^n\setminus
B_i)\cup (B_i\setminus B_i^n)$. Since $Q$ has convex cells, the
boundary $\partial B_i$ of each cell $B_i$ has zero Lebesgue measure,
so $P(\partial B_i)=0$ because $P$ has a density. Since
$\partial(B_i\times \{j\})= \partial B_i \times \{j\}$, and $PQ(A \times \{j\})
= P(A \cap B_j)$,
 we have
\[PQ(\partial(B_i\times \{j\}))= P(\partial B_i \cap B_j)=0,\] for all
$i$ and $j$. Thus if $PQ^n \to PQ$ weakly, then
$PQ^n(B_i\times \{j\})
\to PQ(B_i\times \{j\})$  by the Portmanteau theorem, which is equivalent to
\[
P(B_i \cap B_j^n )\to P(B_i\cap B_j)
\]
for all $i$ and $j$. Since  $\{B^n_1,\ldots,B^n_M\}$  and $\{B_1,\ldots,B_M\}$
are both partitions of $\R^d$, this implies  $P(B^n_i
\bigtriangleup B_i)\to 0$ for all $i$, which in turns proves that $PQ^n\to PQ$ in
total variation via~\eqref{diff_conv}.

If $P$ has a positive density  and $P'$ admits a density, then $P'$ is
absolutely continuous with respect to $P$ and   so  $P(B^n_i
\bigtriangleup B_i)\to 0$ implies  $P'(B^n_i\bigtriangleup B_i)\to
0$. Combined with the preceding argument this proves the second statement in
part~(b).

(c) For any $A\in \B(\sX\times \sM)$ let
$A(x)\coloneqq \{y: (x,y) \in A\}$. Then
\begin{eqnarray*}
\lefteqn{ |P_n'Q_n(A)-P'Q_n(A)|} \\
& =&\left| \int_{\R^d}   Q_n(A(x)|x) P'_n(dx) -
 \int_{\R^d}   Q_n(A(x)|x) P'(dx)\right|  \nonumber \\
&\le &d_{TV}(P'_n,P'),
\end{eqnarray*}
where the inequality is due to \eqref{TValternative}. Taking the  supremum over all
$A$ yields
\[
d_{TV}(P'_nQ_n,P'Q_n) \le d_{TV}(P'_n,P').
\]

Hence
 \begin{eqnarray*}
\lefteqn{ d_{TV}(P'_nQ_n, P'Q) }\quad \\*
  &\le &  d_{TV}(P'_nQ_n, P'Q_n) + d_{TV}(P'Q_n, P'Q)\nonumber \\
 &\le&   d_{TV}(P'_n, P) + d_{TV}(P'Q_n, P'Q).
\end{eqnarray*}
From part~(b) we know that $Q_n\to Q$ in total variation at $P'$. Since $P'_n\to
P$ in total variation, we obtain $d_{TV}(P'_nQ_n, P'Q)\to 0$.
\qed

Recall from Definition~\ref{defsetS} in Section~\ref{SectionSpaceQuant} the
set $\S\subset \P(\R^d)$ of probability measures admitting
densities that are uniformly bounded and uniformly Lipschitz (with constants
determined by the conditional density $\phi(\,\cdot\,|x)$ of
$x_{t+1}=f(x_t,w_t)$ given $x_t=x$).  In Lemma~\ref{technicalLemma} we showed
that $\S$ contains all reachable states, i.e., $\pi_t \in \S$ for all $t\ge 1$
with probability 1 under any policy $\Pi\in \Pi_W$.

Lemma~\ref{keyTechnicalLemma}(a) immediately implies that for any sequence
$\{\mu_n\}$ in $\S$ and $\mu\in \S$,  $\mu_n\to \mu$ weakly if and only if
$\mu_n\to \mu$ in total variation. In this case  we simply say that
$\{\mu_n\}$ converges to $\mu$ in $\S$.

As discussed in Section~\ref{SectionSpaceQuant}, we can define the (quotient)
topology on $\Q_c$ induced by weak convergence of sequences at a given $P$
admitting a positive density. Lemma~\ref{keyTechnicalLemma}(b) implies that any
sequence in $\Q_c$ converging in this topology will converge both weakly and in
total variation at any $P'$ admitting a density.  In the rest of this section,
to say that $\{Q_n\}$ converges in $\Q_c$ will mean \emph{convergence in this
  topology}. We equip $S\times\Q_c$ with the corresponding product topology, and
continuity of any $F:\S\times \Q_c\to \R$ \emph{will be meant in this sense},
unless specifically stated otherwise.

\begin{lemma} \label{lemma_sqconv}
\begin{itemize}
\item[\em (a)] $\S$ is closed in $\P(\R^d)$.
\item[\em (b)] $\Q_c$ is compact.
\item[\em (c)] If $\{(\mu_n,Q_n)\}$ converges in
  $\S\times \Q_c$ to $(\mu,Q)\in \S\times \Q_c$ then  $\mu_nQ_n\to \mu
  Q$ in total variation. Thus any  $F:\S\times \Q_c\to \R$ is continuous if $F(\mu_n,Q_n)\to
    F(\mu,Q)$ whenever $\mu_nQ_n\to \mu  Q$ in total variation.

\end{itemize}

\end{lemma}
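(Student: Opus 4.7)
The plan is to prove the three parts in sequence, leveraging Lemma~\ref{keyTechnicalLemma} and the compactness result in \cite[Thm.~5.8]{YukLinSIAM2010}. Parts (a) and (c) are essentially extractions from the hypothesis and machinery already set up, while part (b) is a one-line quotient-topology observation.

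For part (a), I would take $\{\mu_n\}\subset \S$ with $\mu_n\to \mu$ weakly in $\P(\R^d)$ and argue that the limit density (which exists by Lemma~\ref{keyTechnicalLemma}(a)) still satisfies the bound $C$ and Lipschitz constant $C_1$. Concretely, the densities $\{\mu_n\}$ are uniformly bounded and equicontinuous, so by the Arzel\`a--Ascoli theorem together with Cantor's diagonal argument across the balls $\{\|x\|\le i\}$ (precisely the scheme used in the proof of Lemma~\ref{keyTechnicalLemma}(a)), some subsequence $\mu_{n_k}$ converges uniformly on compacts to a continuous $\hat\mu$. Pointwise passage to the limit in $\mu_{n_k}(x)\le C$ and $|\mu_{n_k}(x)-\mu_{n_k}(y)|\le C_1\|x-y\|$ preserves both inequalities for $\hat\mu$, and Lemma~\ref{keyTechnicalLemma}(a) forces $\hat\mu$ to be the density of $\mu$. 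Hence $\mu\in \S$.

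For part (b), I would recall that $\Q_c$ carries the quotient topology inherited from the space $\Gamma_P=\{PQ:Q\in \Q_c\}\subset \P(\R^d\times \sM)$ for any fixed $P$ with positive density. By \cite[Thm.~5.8]{YukLinSIAM2010}, $\Gamma_P$ is a compact subset of $\P(\R^d\times \sM)$. Since the quotient map $\Gamma_P\to \Q_c$ is continuous and surjective and continuous images of compact spaces are compact, $\Q_c$ is compact.

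For part (c), suppose $(\mu_n,Q_n)\to (\mu,Q)$ in $\S\times \Q_c$. Convergence in $\S$ means weak convergence in $\P(\R^d)$, which by Lemma~\ref{keyTechnicalLemma}(a) and part (a) upgrades to total variation convergence $\mu_n\to \mu$ with $\mu\in \S$; in particular $\mu$ admits a density. Convergence in $\Q_c$ means $Q_n\to Q$ weakly at some (equivalently, every) $P$ with positive density. These are exactly the two hypotheses of Lemma~\ref{keyTechnicalLemma}(c), which immediately yields $\mu_n Q_n\to \mu Q$ in total variation. The trailing statement about continuity of $F:\S\times \Q_c\to \R$ then follows tautologically: if $F$ respects total-variation convergence of the joint measure, then $(\mu_n,Q_n)\to (\mu,Q)$ in the product topology implies $\mu_nQ_n\to \mu Q$ in total variation by what was just shown, hence $F(\mu_n,Q_n)\to F(\mu,Q)$.

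The main obstacle I anticipate is really just the bookkeeping in part (a): verifying that the limit density inherits both the bound and the Lipschitz constant, and then identifying it with the density of $\mu$. Parts (b) and (c) are short consequences of results already stated, with (c) essentially amounting to a check that the hypotheses of Lemma~\ref{keyTechnicalLemma}(c) hold in the product-topology setting.
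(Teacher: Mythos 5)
Your proposal is correct and follows essentially the same route as the paper: part (a) via Arzel\`a--Ascoli and pointwise passage to the limit combined with Lemma~\ref{keyTechnicalLemma}(a), part (b) by citing the compactness of $\Gamma_P$ from \cite[Thm.~5.8]{YukLinSIAM2010}, and part (c) by verifying the hypotheses of Lemma~\ref{keyTechnicalLemma}(c). The only cosmetic difference is that you spell out the quotient-map argument in (b), which the paper leaves implicit.
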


\begin{proof} (a) Recall that $\S$ is a uniformly bounded and uniformly
  equicontinuous family of densities.  Lemma~\ref{keyTechnicalLemma}(a) shows
  that if $\{\mu_n\}$ is a sequence in $\S$ and $\mu_n\to \mu$ weakly, then
  $\mu$ has a density. The proof also shows that some subsequence of (the
  densities of) $\{\mu_n\}$ converges to (the density of) $\mu$
  \emph{pointwise}. Thus $\mu$ must admit the same uniform upper bound and
  Lipschitz constant as all densities in $\S$, proving that $\mu\in \S$.

(b) The compactness of $\Q_c$  was shown in \cite[Thm. 5.8]{YukLinSIAM2010}.

(c)   If $\{(\mu_n,Q_n)\}$ converges in
  $\S\times \Q_c$ to $(\mu,Q)\in \S\times \Q_c$ then $\mu_n\to \mu$ in total
  variation. Since $\mu$ has a density, $Q_n\to Q$ in $\Q_c$ implies that
  $Q_n\to Q$ in total variation at $\mu$. Thus  $\mu_nQ_n\to \mu  Q$ in total
  variation by   Lemma~\ref{keyTechnicalLemma}(c).
\end{proof}

\subsection{Proof of Theorem  \ref{MeasurableSelectionApplies}} \label{ProofOptQuantizerSectionB}

The  first
statement of the following
theorem immediately implies
Theorem~\ref{MeasurableSelectionApplies}.

\begin{theorem}\label{thmexistencehypothesis}
 For  $t=T-1,\ldots,0$ define the
 value function $J^T_{t}$ at time $t$ recursively by
\[
J^T_{t}(\pi) =
 \inf_{Q \in \mathcal{Q}_c} \biggl( \frac{1}{T}c(\pi,Q) + E[J^T_{t+1}(\pi_{t+1})|
   \pi_t=\pi,Q_t=Q ]  \biggr)
\]
with $J^T_T\coloneqq0$ and $c(\pi,Q)$ defined in \eqref{eq_cdef}. Then for any
$t\ge 1$ and $\pi\in \mathcal{S}$ or $t=0$ and $\pi\in \S\cup \{\pi_0\}$,  the
infimum is achieved by some $Q$ in $\mathcal{Q}_c$.  Moreover, $J^T_t(\pi)$ is
continuous on $\mathcal{S}$.
\end{theorem}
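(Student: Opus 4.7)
I would prove Theorem~\ref{thmexistencehypothesis} by backward induction on $t$ from $T$ down to $0$, simultaneously establishing that $J^T_t$ is bounded and continuous on $\mathcal{S}$ and that the infimum over $\Q_c$ in the dynamic programming recursion is attained. The base case $t=T$ is immediate since $J^T_T\equiv 0$. For the inductive step, assuming $J^T_{t+1}$ is bounded and continuous on $\mathcal{S}$, define
\begin{equation*}
F_t(\pi,Q) \coloneqq \tfrac{1}{T}\, c(\pi,Q) + E\bigl[J^T_{t+1}(\pi_{t+1}) \,\big|\, \pi_t=\pi,\, Q_t=Q\bigr]
\end{equation*}
on $\mathcal{S}\times \Q_c$. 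The plan is to show $F_t$ is continuous and bounded, and then invoke compactness of $\Q_c$ (Lemma~\ref{lemma_sqconv}(b)) together with Berge's maximum theorem to simultaneously conclude that a minimizer $Q\in \Q_c$ exists for each $\pi$ and that $J^T_t(\pi)=\min_{Q\in \Q_c}F_t(\pi,Q)$ is continuous in $\pi$. Boundedness propagates trivially because $\|c_0\|_\infty<\infty$ forces $c(\pi,Q)\le \|c_0\|_\infty$.

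\textbf{Continuity of the one-stage cost.} The first auxiliary step is continuity of $c(\pi,Q)=\sum_{i=1}^M \inf_{u\in \sU}\int_{Q^{-1}(i)}c_0(x,u)\pi(dx)$ on $\mathcal{S}\times \Q_c$. If $(\pi_n,Q_n)\to (\pi,Q)$ in $\mathcal{S}\times \Q_c$, then Lemma~\ref{lemma_sqconv}(c) gives $\pi_nQ_n\to \pi Q$ in total variation. Continuity and boundedness of $c_0$ on $\R^d\times \sU$ (Assumption~\ref{AssumptionACompactness}(i)), together with compactness of $\sU$ (Assumption~\ref{AssumptionACompactness}(ii)), imply that $(\pi,Q,u)\mapsto \int c_0(x,u)1_{Q^{-1}(i)}(x)\pi(dx)$ is jointly continuous. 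Berge's theorem then gives continuity of $\inf_{u\in \sU}$ in $(\pi,Q)$ (and the infimum is attained, which is what the existence of an optimal receiver policy in Theorem~\ref{ExistenceOfOptimalReceiverPolicy} already provides). Summing over $i$ yields continuity of $c$.

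\textbf{Continuity of the continuation term (main obstacle).} The hard part is showing that $(\pi,Q)\mapsto E[J^T_{t+1}(\pi_{t+1})\mid \pi_t=\pi,Q_t=Q]$ is continuous on $\mathcal{S}\times \Q_c$. The filtering equation~\eqref{filtre} shows that, conditional on $\pi_t=\pi$ and $Q_t=Q$, the updated posterior $\pi_{t+1}$ takes at most $M$ values: on $\{q_t=i\}$ (having probability $\pi(Q^{-1}(i))$) it equals the measure $\pi^i_{\pi,Q}$ with density
\begin{equation*}
\pi^i_{\pi,Q}(z)=\frac{1}{\pi(Q^{-1}(i))}\int_{Q^{-1}(i)}\phi(z|x)\pi(dx),
\end{equation*}
so the conditional expectation equals $\sum_{i=1}^M \pi(Q^{-1}(i))\, J^T_{t+1}(\pi^i_{\pi,Q})$. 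Now suppose $(\pi_n,Q_n)\to (\pi,Q)$ in $\mathcal{S}\times \Q_c$. Total variation convergence $\pi_nQ_n\to \pi Q$ (Lemma~\ref{lemma_sqconv}(c)), applied to the bounded measurable function $(x,j)\mapsto 1_{\{j=i\}}$, gives $\pi_n(Q_n^{-1}(i))\to \pi(Q^{-1}(i))$; applied to $(x,j)\mapsto \phi(z|x)1_{\{j=i\}}$ for each fixed $z$, it gives the pointwise convergence $\pi^i_{\pi_n,Q_n}(z)\to \pi^i_{\pi,Q}(z)$ on cells with $\pi(Q^{-1}(i))>0$. Because $\phi(\,\cdot\,|x)\le C$ and is uniformly Lipschitz with constant $C_1$ (Assumption~\ref{AssumptionA}), each $\pi^i_{\pi_n,Q_n}$ lies in $\mathcal{S}$, and Scheffe's theorem (equivalently Lemma~\ref{keyTechnicalLemma}(a)) promotes pointwise to total variation convergence, i.e., convergence in $\mathcal{S}$. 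The inductive continuity of $J^T_{t+1}$ then yields termwise convergence, and cells with $\pi(Q^{-1}(i))=0$ contribute vanishingly because $J^T_{t+1}$ is bounded. Combined with the first step, $F_t$ is continuous and bounded on $\mathcal{S}\times \Q_c$, and the induction closes by compactness of $\Q_c$ and Berge's theorem. The main obstacle is exactly the two-level continuity argument just described: one needs the refined topology on $\Q_c$ (and the fact that reachable posteriors stay in the equicontinuous class $\mathcal{S}$) to simultaneously control the ``weights'' $\pi(Q^{-1}(i))$ and the ``next-step posteriors'' $\pi^i_{\pi,Q}$; without the regularity provided by Assumption~\ref{AssumptionA} the continuation term can fail to be continuous.
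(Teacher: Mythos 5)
Your proposal is correct and follows essentially the same route as the paper: backward induction, continuity of $c(\pi,Q)$ on $\S\times\Q_c$, decomposition of the continuation term into the $M$ posteriors $\hat{\pi}(m,\pi,Q)$ weighted by $\pi(Q^{-1}(m))$, Scheff\'e's theorem to upgrade pointwise to total-variation convergence of those posteriors, and a Berge-type maximum argument (which the paper proves directly as Lemma~\ref{ContinuityofOptimalValue}). The only item you omit is the separate treatment of $t=0$ when $\pi_0\notin\S$ (e.g., a point mass $\delta_{x_0}$), but this is a one-line observation since in that case any $Q$ is optimal, and when $\pi_0$ merely admits a positive density continuity in $Q$ alone suffices.
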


The rest of this section is devoted to proving
Theorem~\ref{thmexistencehypothesis}. The proof is through backward induction in
$t$ combined with a series of lemmas
that show the continuity of both $c(\pi,Q)$ and $E[J^T_{t+1}(\pi_{t+1})|
\pi_t=\pi,Q_t=Q ]$ in $(\pi,Q)$.

\begin{lemma}\label{continuityofc} $c(\pi,Q)$ is continuous on $\mathcal{S}\times
\mathcal{Q}_c$.
\end{lemma}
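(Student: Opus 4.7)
The plan is to rewrite $c(\pi,Q)$ so that its dependence on the joint measure $\pi Q$ and on $u$ is transparent, then establish joint continuity of the integrand in $(\pi,Q,u)$, and finally pass to the infimum using compactness of $\sU$.

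First I would write
\[
c(\pi,Q) = \sum_{i=1}^M \inf_{u\in\sU} g_i(\pi,Q,u), \quad g_i(\pi,Q,u) \coloneqq \int_{\R^d\times\sM} \mathbf{1}_{\{y=i\}}\, c_0(x,u)\, (\pi Q)(dx\,dy).
\]
Since $c$ is a finite sum, it suffices to show that each map $(\pi,Q)\mapsto \inf_u g_i(\pi,Q,u)$ is continuous on $\S\times\Q_c$.

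Second, I would establish joint continuity of $g_i:\S\times\Q_c\times\sU\to\R$. Suppose $(\pi_n,Q_n,u_n)\to(\pi,Q,u)$. By Lemma~\ref{lemma_sqconv}(c), $\pi_n Q_n\to\pi Q$ in total variation. Write
\[
|g_i(\pi_n,Q_n,u_n)-g_i(\pi,Q,u)|
\le \int \bigl|c_0(x,u_n)-c_0(x,u)\bigr|\,\pi_n(dx) + \|c_0\|_\infty\, d_{TV}(\pi_n Q_n,\pi Q).
\]
The second term vanishes. For the first, I would use tightness plus uniform continuity: since $\pi_n\to\pi$ in total variation the sequence $\{\pi_n\}$ is tight, so given $\epsilon>0$ one can choose a compact $K\subset\R^d$ with $\sup_n \pi_n(K^c)<\epsilon$; on the compact set $K\times\sU$ the continuous function $c_0$ is uniformly continuous, so $|c_0(x,u_n)-c_0(x,u)|<\epsilon$ on $K$ for all large $n$. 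Splitting the integral over $K$ and $K^c$ then bounds the first term by $\epsilon + 2\|c_0\|_\infty\,\epsilon$.

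Third, since $\sU$ is compact (Assumption~\ref{AssumptionACompactness}(ii)) and $g_i$ is jointly continuous, a standard application of Berge's maximum theorem gives continuity of $(\pi,Q)\mapsto \inf_{u\in\sU} g_i(\pi,Q,u)$ on $\S\times\Q_c$. Summing over $i=1,\dots,M$ yields the claim.

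The main obstacle I anticipate is controlling the first-term integral in the second step: because $\pi_n$ varies with $n$, mere pointwise convergence of $c_0(\,\cdot\,,u_n)$ together with the dominated convergence theorem against a fixed measure does not suffice. The combination of total-variation-induced tightness of $\{\pi_n\}$ and uniform continuity of $c_0$ on compacta of $\R^d\times\sU$ (compactness of $\sU$ is essential here) is what closes the argument and forces the use of the full topology on $\S\times\Q_c$ set up in Lemma~\ref{lemma_sqconv}.
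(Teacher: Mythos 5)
Your proof is correct and follows essentially the same route as the paper: both arguments hinge on Lemma~\ref{lemma_sqconv}(c) to upgrade convergence in $\S\times\Q_c$ to total-variation convergence of the joint measures $\pi_nQ_n\to\pi Q$, and then pass the convergence through the infimum over reconstruction values using the compactness of $\sU$ and the boundedness and continuity of $c_0$. The only difference is that the paper delegates this second step to the proof of Theorem~3.4 of \cite{YukLinSIAM2010}, whereas you supply the details (the total-variation/tightness splitting and the Berge-type argument) explicitly.
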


\begin{proof}
If $\{(\pi_n,Q_n)\}$  converges in $\mathcal{S}\times
\mathcal{Q}_c$  then
$\pi_nQ_n\to \pi Q$ in total variation by Lemma~\ref{lemma_sqconv}(c).
We have to show that in this case
\begin{eqnarray*}
c(\pi_n,Q_n) &=& \inf_{\gamma} \int_{\R^d} \pi_n(dx)\sum_{i=1}^M Q_n(i|x) c_0(x,
  \gamma(i))  \\* 
&&  \to
\inf_{\gamma} \int_{\R^d} \pi(dx)\sum_{i=1}^M Q(i|x)  c_0(x, \gamma(i))\\
& =& c(\pi,Q).
\end{eqnarray*}
This follows verbatim from the proof of
\cite[Thm. 3.4]{YukLinSIAM2010}  where for any bounded  $c_0$
the convergence for a  fixed $\pi$ and $Q_n\to Q$
was shown.
\end{proof}

We now start proving Theorem~\ref{thmexistencehypothesis}. At $t=T-1$ we have
\[
J^T_{T-1}(\pi) = \inf_{Q\in \Q_c} c(\pi,Q).
\]
By Lemma~\ref{continuityofc} and the compactness of the set of
quantizers  $\mathcal{Q}_c$ (Lemma~\ref{lemma_sqconv}(b)) there exists an
optimal quantizer that achieves the infimum. The following lemma will
be useful.

\begin{lemma}\label{ContinuityofOptimalValue}
If $F: {\cal S} \times {\cal Q}_c \to \mathbb{R}$ is continuous then
$\inf_{Q\in \mathcal{Q}_c} F(\pi,Q)$ is achieved by some $Q$ in
$\mathcal{Q}_c $ and $\min\limits_{Q} F(\pi,Q)$
is continuous in $\pi$ on $\S$.
\end{lemma}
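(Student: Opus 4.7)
The plan is to use the compactness of $\mathcal{Q}_c$ established in Lemma~\ref{lemma_sqconv}(b) together with continuity of $F$ to run a standard Berge-style maximum-theorem argument; no sophisticated machinery is needed.

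First I would handle the existence of a minimizer. Fix $\pi\in \mathcal{S}$. Since $F(\pi,\,\cdot\,):\mathcal{Q}_c\to \R$ is the restriction of a continuous map to the first slice, it is itself continuous on $\mathcal{Q}_c$. Because $\mathcal{Q}_c$ is compact, $F(\pi,\,\cdot\,)$ attains its infimum, so there exists $Q^\ast=Q^\ast(\pi)\in \mathcal{Q}_c$ with $F(\pi,Q^\ast)=\min_{Q\in\mathcal{Q}_c} F(\pi,Q)$. Let $v(\pi)\coloneqq \min_{Q\in \mathcal{Q}_c} F(\pi,Q)$; the goal is to show $v$ is continuous on $\mathcal{S}$.

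Next I would prove sequential continuity of $v$ by establishing the two inequalities $\limsup v(\pi_n)\le v(\pi)$ and $\liminf v(\pi_n)\ge v(\pi)$ whenever $\pi_n\to\pi$ in $\mathcal{S}$. For the upper semicontinuity, fix a minimizer $Q^\ast\in \mathcal{Q}_c$ at $\pi$; then $v(\pi_n)\le F(\pi_n,Q^\ast)\to F(\pi,Q^\ast)=v(\pi)$ by joint continuity of $F$ along the sequence $(\pi_n,Q^\ast)\to(\pi,Q^\ast)$, giving $\limsup_n v(\pi_n)\le v(\pi)$.

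For the lower semicontinuity, I would argue by contradiction along a subsequence. Suppose $\liminf_n v(\pi_n) < v(\pi)$; then there is a subsequence $\{\pi_{n_k}\}$ with $v(\pi_{n_k})\to \alpha<v(\pi)$. Pick minimizers $Q_{n_k}^\ast\in \mathcal{Q}_c$ so that $F(\pi_{n_k},Q_{n_k}^\ast)=v(\pi_{n_k})$. By compactness of $\mathcal{Q}_c$, pass to a further subsequence along which $Q_{n_k}^\ast\to \tilde{Q}\in \mathcal{Q}_c$. Then $(\pi_{n_k},Q_{n_k}^\ast)\to (\pi,\tilde{Q})$ in $\mathcal{S}\times \mathcal{Q}_c$, and continuity of $F$ yields $F(\pi_{n_k},Q_{n_k}^\ast)\to F(\pi,\tilde Q)$, hence $\alpha=F(\pi,\tilde Q)\ge v(\pi)$, contradicting $\alpha<v(\pi)$. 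Combining the two inequalities gives $v(\pi_n)\to v(\pi)$, proving continuity. The only nonroutine ingredient is the compactness of $\mathcal{Q}_c$, which has already been secured, so I do not anticipate any genuine obstacle in the proof.
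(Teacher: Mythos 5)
Your proposal is correct and follows essentially the same route as the paper: existence via compactness of $\mathcal{Q}_c$ plus continuity of $F$, upper semicontinuity of the value by evaluating at a fixed minimizer for $\pi$, and lower semicontinuity by extracting a convergent subsequence of minimizers and deriving a contradiction. The paper packages the two directions into a single bound by $\max\bigl(F(\pi_n,Q)-F(\pi,Q),\,F(\pi,Q_n)-F(\pi_n,Q_n)\bigr)$, but the underlying argument is identical.
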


\emph{Proof.}  The existence of an optimal $Q$ in $\mathcal{Q}_c$ achieving
$\inf_{Q\in \mathcal{Q}_c} F(\pi,Q)$ is a consequence of the continuity of $F$
and the compactness of $\mathcal{Q}_c$. Assume  $\pi_n \to \pi$ in $\S$   and let   $Q_n$ be
optimal for $\pi_n$ and  $Q$ optimal for $\pi$. Then
\begin{eqnarray*}
\lefteqn{ \big|\min_{Q'} F(\pi_n,Q') - \min_{Q'} F(\pi,Q')\big| }
\nonumber \\* 
& & \hspace{-10pt} \leq \max\! \bigg(\! F(\pi_n,Q) - F(\pi,Q), F(\pi,Q_n) - F(\pi_n,Q_n)
\bigg). 
\end{eqnarray*}
The first term in the maximum converges to zero since $F$ is continuous. To show
that the second converges to zero, suppose to the contrary that for some
$\epsilon > 0$ and for a subsequence $\{(\pi_{n_k},Q_{n_k})\}$,
\begin{equation}
\label{eq:contr}
|F(\pi,Q_{n_k}) - F(\pi_{n_k},Q_{n_k})| \geq \epsilon.
\end{equation}
By  Lemma~\ref{lemma_sqconv}(a), there is a further subsequence $\{n'_k\}$ of
$\{n_k\}$ such that 
$\{Q_{n'_k}\}$  converges to some $Q'$ in $\Q_c$. Then $\{(\pi, Q_{n'_k})\}$ and
$\{(\pi_{n'_k}, Q_{n'_k})\}$  both converge to $(\pi,Q')$,
which contradicts \eqref{eq:contr} since $F$ is continuous. \qed

As a consequence of Lemmas~\ref{continuityofc} and
\ref{ContinuityofOptimalValue}, $J^T_{T-1}(\pi)$ is continuous on
$\S$, proving Theorem~\ref{thmexistencehypothesis} for $t=T-1$. 
To prove the theorem
for all $t=T-2,\ldots,0$, we apply backward induction. Assume that the both
statements of the theorem hold for $t'=T-1,\ldots,t+1$.
We want to show  that the
minimization problem
\begin{eqnarray}
\lefteqn{J^T_t(\pi) } \nonumber \\
 & & \!\! \!\!\!\!\!\!\!\! = \min_{Q \in \mathcal{Q}_c}
\!\!\bigg(  \frac{1}{T}  c(\pi,Q)\! +  \!
E\bigl[J^T_{t+1}(\pi_{t+1})|\pi_t\!=\!\pi,Q_t\!=\!Q\bigr]\!\!
\bigg) \label{recursionDP2} 
\end{eqnarray}
has a  solution and $J^T_{t}(\pi)$ is continuous on $\S$.

Consider  the conditional probability distributions given by
\begin{eqnarray}
\lefteqn{\!\!\!\! \hat{\pi}(m,\pi,Q)(C) 
 \coloneqq  P(x_{t+1} \in C | \pi_t=\pi,Q_t=Q, q_t = m) }\nonumber \\
& & \!\!\!\!\! \!\!\!\!\!\!\!\!   = \frac{1}{\pi(Q^{-1}(m))} \int_C  \bigg(\int_{\R^d}\pi(dx) 1_{\{x \in B_m\}}
\phi(z|x)  \bigg)\,  dz \label{updateQuantizationNext} 
\end{eqnarray}
(if $\pi(Q^{-1}(m))=0$, then $\hat{\pi}(m,\pi,Q)$ is set arbitrarily). Note that
\begin{eqnarray}
\lefteqn{ E\bigl[J^T_{t+1}(\pi_{t+1}) |\pi_t=\pi,Q_t=Q\bigr]  }\qquad  \nonumber \\*
&= &  \sum_{m=1}^M   J^T_{t+1}\bigl(\hat{\pi}(m,\pi,Q)\bigr)
\pi\bigl(Q^{-1}(m)\bigr), \label{eq_totalexp}
\end{eqnarray}
where
\[
 \pi\bigl(Q^{-1}(m)\bigr)=   P(q_{t}=m | \pi_{t}=\pi,Q_{t}=Q).
\]

The following lemma will imply that if $(\pi_n, Q_n)\to ( \pi, Q)$ in $\S\times \Q_c$, then
\begin{eqnarray}
\lefteqn{  J^T_{t+1}\bigl(\hat{\pi}(m,\pi_n,Q_n)\bigr)
  \pi_n\bigl(Q_n^{-1}(m)\bigr)} \nonumber \qquad \qquad  \\
&&    \to   J^T_{t+1}\bigl(\hat{\pi}(m,\pi,Q)\bigr) \pi\bigl(Q^{-1}(m)\bigr)
 \label{eq_sumconv}
\end{eqnarray}
for all $m$.

\begin{lemma} \label{TotalVarConvBins}
If $\pi_n Q_n \to
\pi Q$ in total variation, then   $\hat{\pi}(m,\pi_n,Q_n)\to
\hat{\pi}(m,\pi,Q))$ in total variation for every $m=1,\ldots,M$ with $\pi(Q^{-1}(m)) >
0$.

\end{lemma}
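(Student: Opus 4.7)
The plan is to pass to densities and combine total variation convergence of the joint measures $\pi_n Q_n\to\pi Q$ with the uniform boundedness of the Markov kernel density $\phi(\,\cdot\,|x)$. From \eqref{updateQuantizationNext}, $\hat{\pi}(m,\pi,Q)$ has Lebesgue density
\[
g_{m,\pi,Q}(z)=\frac{h_{m,\pi,Q}(z)}{\pi(Q^{-1}(m))},\qquad h_{m,\pi,Q}(z)\coloneqq\int_{\R^d\times\sM}\!\!\!\!1_{\{y=m\}}\phi(z|x)\,(\pi Q)(dx\,dy),
\]
so that $d_{TV}(\hat{\pi}(m,\pi_n,Q_n),\hat{\pi}(m,\pi,Q))=\|g_{m,\pi_n,Q_n}-g_{m,\pi,Q}\|_1$. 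The strategy is to control the numerators $h_{m,\pi_n,Q_n}$ separately from the denominators $\pi_n(Q_n^{-1}(m))$ and then combine.

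First, since $\pi_n Q_n\to\pi Q$ in total variation, evaluating on the measurable set $\R^d\times\{m\}$ yields $\pi_n(Q_n^{-1}(m))\to\pi(Q^{-1}(m))$, a limit that is strictly positive by hypothesis. In particular the denominators are bounded away from $0$ for all large $n$. Next, for each fixed $z\in\R^d$ the function $(x,y)\mapsto 1_{\{y=m\}}\phi(z|x)$ has supremum norm at most $C$, the uniform bound on $\phi$ from Assumption~\ref{AssumptionA}; hence by \eqref{TValternative},
\[
\sup_{z\in\R^d}|h_{m,\pi_n,Q_n}(z)-h_{m,\pi,Q}(z)|\le C\,d_{TV}(\pi_n Q_n,\pi Q)\longrightarrow 0,
\]
so the numerator densities converge pointwise (indeed uniformly) on $\R^d$.

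Uniform convergence alone does not give $L^1$ convergence on $\R^d$, but the $L^1$ norm of $h_{m,\pi_n,Q_n}$ is exactly $\pi_n(Q_n^{-1}(m))$, which converges to $\|h_{m,\pi,Q}\|_1=\pi(Q^{-1}(m))$ by the first step. Since the $h_{m,\pi_n,Q_n}$ are nonnegative and converge pointwise with converging $L^1$ norms, Scheff\'e's theorem gives $\|h_{m,\pi_n,Q_n}-h_{m,\pi,Q}\|_1\to 0$. Writing
\[
g_{m,\pi_n,Q_n}-g_{m,\pi,Q}=\frac{h_{m,\pi_n,Q_n}-h_{m,\pi,Q}}{\pi_n(Q_n^{-1}(m))}+h_{m,\pi,Q}\!\left(\frac{1}{\pi_n(Q_n^{-1}(m))}-\frac{1}{\pi(Q^{-1}(m))}\right)
\]
and taking $L^1$ norms, both terms vanish as $n\to\infty$, yielding the claimed total variation convergence. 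The only subtle point—which I expect to be the main obstacle if one tries a more direct route—is precisely this: pointwise/uniform convergence of the numerator densities has to be upgraded to $L^1$ convergence, and this is exactly where Scheff\'e (enabled by nonnegativity and matching total mass) is essential.
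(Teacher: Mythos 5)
Your proof is correct and follows essentially the same strategy as the paper's: both reduce the claim to Scheff\'e's theorem after establishing pointwise convergence of the (numerator) densities $z\mapsto \int_{Q^{-1}(m)}\phi(z|x)\,\pi(dx)$ using the uniform bound $C$ on $\phi$. The only real difference is cosmetic but tidy: your one-line estimate $\sup_z|h_{m,\pi_n,Q_n}(z)-h_{m,\pi,Q}(z)|\le C\, d_{TV}(\pi_nQ_n,\pi Q)$, obtained from the dual form \eqref{TValternative} applied to the joint measures, replaces the paper's two-term bound $C\bigl[\pi_n(B_m^n\bigtriangleup B_m)+d_{TV}(\pi_n,\pi)\bigr]$ and the accompanying symmetric-difference bookkeeping.
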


\emph{Proof.} Let $B_1,\ldots,B_M$ and $B_1^n,\ldots,B_M^n$ denote the cells of
$Q$ and $Q_n$, respectively. Since for any Borel set $A$, $\pi_n Q_n(A\times
\{j\}) =\pi_n(A\cap B^n_j)$, the convergence of $\pi_n Q_n$ to $\pi Q$ implies
that $\pi_n(A\cap B^n_m)\to \pi(A\cap B_m)$. This implies $\pi_n(B_i \cap
B_j^n)\to \pi(B_i\cap B_j)$ for all $i$ and $j$, from which we obtain for all
$m=1,\ldots,M$, 
\begin{equation}
\label{eq_pnmbconv}
\pi_n(B_m^n)\to \pi(B_m), \quad \pi_n(B_m^n\bigtriangleup B_m)\to
0, 
\end{equation}
where $B_m^n \bigtriangleup B_m = (B_m^n \setminus B_m)\cup (B_m\setminus B_m^n)$.

If $\pi(B_m)>0$, the probability distribution
$\hat{\pi}(m,\pi,Q)$ has density
\[
\hat{\pi}(m,\pi,Q)(z)= \frac{1}{\pi(B_m)}  \int_{B_m} \pi(dx)
 \phi(z|x)
\]
so by Scheffe's theorem \cite{Bil86} it suffices to show that
$\hat{\pi}(m,\pi_n,Q_n)(z) \to \hat{\pi}(m,\pi,Q)(z)$  for all $z$. As $\pi(B_m)>0$ by assumption and $\pi_n(B_m^n)\to \pi(B_m)$, it is enough to
establish the convergence of $ v_n^m(z)\coloneqq \int_{B_m^n} \pi_n(dx) \phi(z |x) $ to  $ v^m(z)\coloneqq \int_{B_m} \pi(dx)
\phi(z|x) $.

For any $z\in \R^d$ we have
\begin{eqnarray}
\lefteqn{  |v_n^m(z)-v^m(z)| } \nonumber \\*
&\le & \left| \int_{\R^d} \pi_n(dx) \bigl (1_{\{x\in
     B_m^n\}} - 1_{\{x\in B_m\}} \bigr)  \phi(z|x) \right|
  \nonumber \\
 & & \mbox{} +   \left| \int_{\R^d} 1_{\{x\in
    B_m\}}\bigl(\pi_n(x)-\pi(x)\bigr)  \phi(z|x)\, dx \right|
  \nonumber\\
 &\le & \int_{B_m^n\bigtriangleup B_m} \!\!\!\!\!\! \pi_n(dx)  \phi(z|x)
   +   \int_{\R^d}\bigl|\pi_n(x)-\pi(x)\bigr|
  \phi(z|x) \, dx  \nonumber\\
 &\le & C \biggl[ \pi_n(B_m^n\bigtriangleup B_m) +
    d_{TV}(\pi_n,\pi)\biggl],
\end{eqnarray}
where $C$ is a uniform upper bound on $\phi$. Since both terms in the
brackets converge to zero as $n\to \infty$, the proof is complete.  \qed

Now if $(\pi_n, Q_n)\to ( \pi, Q)$ in $\S\times \Q_c$, then by
Lemma~\ref{lemma_sqconv} and \eqref{eq_pnmbconv} we have $
\pi_n\bigl(Q_n^{-1}(m)\bigr)\to \pi\bigl(Q^{-1}(m)\bigr)$ for all $m$. If
$\hat{\pi}(m,\pi,Q) >0$ for some $m$, then Lemma~\ref{TotalVarConvBins} implies
that $\hat{\pi}(m,\pi_n,Q_n)\to \hat{\pi}(m,\pi,Q)$ in total variation; hence
\eqref{eq_sumconv} holds in this case by the continuity of $J^T_{t+1}$. If $
\pi\bigl(Q^{-1}(m)\bigr)=0$, then by \eqref{eq_pnmbconv} and the boundedness of
the cost \eqref{eq_sumconv} holds again. In view of \eqref{eq_totalexp}, we
obtain that $E\bigl[J^T_{t+1}(\pi_{t+1}) |\pi_{t}=\pi,Q_{t}=Q\bigr]$ is
continuous on $\S\times \Q_c$.

We have shown that both expressions on the right side of (\ref{recursionDP2})
are continuous on $\S\times \Q_c$. By Lemma~\ref{ContinuityofOptimalValue} the
minimization problem \eqref{recursionDP2} has a solution and $J^T_{t}(\pi)$ is
continuous on $\S$, proving the induction hypotesis for $t'=t$. 

To finish the
proof we have to consider the last step $t=0$ separately. For $t=0$ we have that
if $\pi$ admits a positive  density, then there exists a minimizing $Q$ for
  \[
J^T_{0}(\pi) =
 \inf_{Q \in \mathcal{Q}_c} \biggl(
 \frac{1}{T}c(\pi_0,Q) + E[J^T_{1}(\pi_{1})|
   \pi_0=\pi,Q_t=Q ]   \biggl)
\]
by Lemma~\ref{ContinuityofOptimalValue} since the preceding proofs readily imply
that both $c(\pi,Q)$ and $E[J^T_{1}(\pi_{1})| \pi_0=\pi,Q_t=Q ]$ are continuous in
$Q$ as long as $\pi$ admits a positive density. If $\pi$ is a point mass on $x_0$,
then any $Q$ is optimal. This establishes
Theorem~\ref{thmexistencehypothesis}. \qed

\subsection{Proof of Theorem
  \ref{MeasurableSelectionApplies2}}\label{MeasurableApplies2Proof}

The first statement of the following counterpart of
Theorem~\ref{thmexistencehypothesis}
immediately implies  Theorem~\ref{MeasurableSelectionApplies2}.

\begin{theorem}\label{thmexistencehypothesis1}
  Consider Assumption~\ref{AssumptionA'}.  For $t=T-1,\ldots,0$ define
  the value function $J^T_{t}$ at time $t$ recursively by
\[
J^T_{t}(\pi) =
 \inf_{Q \in \mathcal{Q}_c} \biggl( \frac{1}{T}c(\pi,Q) + E\bigl[J^T_{t+1}(\pi_{t+1})|
   \pi_t=\pi,Q_t=Q \bigr]  \biggr)
\]
with $J^T_T\coloneqq0$ and $c(\pi,Q)$ defined in \eqref{eq_cdef}. Then for any
$t\ge 1$ and $\pi\in \mathcal{S}$ or $t=0$ and $\pi\in \S\cup \{\pi_0\}$ the
infimum is achieved by some $Q$ in $\mathcal{Q}_c$.

Moreover, $J^T_t(\pi)$ is continuous on $\mathcal{S}$ in the sense that if
$\pi_n\to \pi$ and $\{\pi_n\}$ satisfies the uniform integrability condition
\begin{equation}
 \label{eq_unint}
\lim_{L\to \infty} \sup_{n\ge 1} \int_{\{\|x\|^2\ge L\}} \|x\|^2
\pi_n(dx) =0,
\end{equation}
then  $J^T_t(\pi_n)\to J^T_t(\pi)$.
\end{theorem}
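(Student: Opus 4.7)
The plan is to follow the backward induction skeleton of Theorem~\ref{thmexistencehypothesis} but replace the pointwise/boundedness arguments with uniform integrability arguments that are compatible with the quadratic cost.

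First I would establish an unbounded-cost analog of Lemma~\ref{continuityofc}: if $\pi_n\to\pi$ in $\S$ and the sequence $\{\pi_n\}$ satisfies \eqref{eq_unint}, and $Q_n\to Q$ in $\Q_c$, then $c(\pi_n,Q_n)\to c(\pi,Q)$. Here one uses that the optimal receiver at the $i$-th cell is the conditional mean \eqref{eq_gammaopt}, so
\[
c(\pi,Q)=\sum_{i=1}^{M}\int_{Q^{-1}(i)}\bigl\|x-\tfrac{1}{\pi(Q^{-1}(i))}\!\int_{Q^{-1}(i)}\!\!y\,\pi(dy)\bigr\|^{2}\pi(dx),
\]
which is bounded by $\int\|x\|^{2}\pi(dx)$. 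I would split the integral at a level set $\{\|x\|\le L\}$ and use \eqref{eq_unint} to make the tail uniformly small, while arguing on the truncated part by the total-variation convergence $\pi_n Q_n\to\pi Q$ from Lemma~\ref{lemma_sqconv}(c) and dominated convergence (together with $\pi_n(B^n_i)\to\pi(B_i)$ established as in \eqref{eq_pnmbconv}). This is the promised Lemma~\ref{ContinuityofOptimalValue1} invoked later in the proof of Theorem~\ref{StationaryOptimalQuadratic}. With this lemma in hand, the base case $J^T_{T-1}(\pi)=\inf_{Q\in\Q_c}\tfrac{1}{T}c(\pi,Q)$ has a minimizer by compactness of $\Q_c$ (Lemma~\ref{lemma_sqconv}(b)), and the required continuity of $J^T_{T-1}$ follows from a version of Lemma~\ref{ContinuityofOptimalValue} in which continuity is only required along sequences satisfying \eqref{eq_unint}.

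For the inductive step, assume the claim for $t+1$. Writing the expected cost-to-go as in \eqref{eq_totalexp}, the two things I have to propagate are (a) convergence of the updated beliefs $\hat\pi(m,\pi_n,Q_n)\to\hat\pi(m,\pi,Q)$ in $\S$, which follows exactly as in Lemma~\ref{TotalVarConvBins} (the proof there used only the bounded uniformly Lipschitz density $\phi$, not the boundedness of the cost); and (b) that the sequence $\{\hat\pi(m,\pi_n,Q_n)\}_n$ itself satisfies the uniform integrability condition \eqref{eq_unint}, so that the inductive continuity hypothesis applies to $J^T_{t+1}(\hat\pi(m,\pi_n,Q_n))\to J^T_{t+1}(\hat\pi(m,\pi,Q))$. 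For (b) I would use the filtering formula \eqref{updateQuantizationNext} together with Assumption~\ref{AssumptionA'}(i), which gives $\|x_{t+1}\|^{2}\le 2K^{2}(\|x_t\|^{2}+\|w_t\|^{2})$; taking conditional expectations shows
\[
\int\|z\|^{2}\hat\pi(m,\pi_n,Q_n)(dz)\le \frac{2K^{2}}{\pi_n(Q_n^{-1}(m))}\!\int_{Q_n^{-1}(m)}\!\!\|x\|^{2}\pi_n(dx)+2K^{2}\!\int\|w\|^{2}\nu_w(dw),
\]
so tail integrability of $\pi_n$ passes to $\hat\pi(m,\pi_n,Q_n)$ uniformly in $n$ on the cells with $\pi(Q^{-1}(m))>0$; cells of limiting mass zero contribute negligibly since the value function is finite-valued at each argument and is multiplied by $\pi_n(Q_n^{-1}(m))\to 0$ (finiteness of the value function on $\S$ follows inductively from the same second-moment bound propagated from any $\pi\in\S$ with finite second moment). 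Combining (a), (b), and the induction hypothesis, each summand in \eqref{eq_totalexp} is continuous in $(\pi,Q)$ along sequences obeying \eqref{eq_unint}, and summing over $m$ gives continuity of the conditional expectation; adding $\tfrac{1}{T}c(\pi,Q)$ and invoking the restricted version of Lemma~\ref{ContinuityofOptimalValue} yields both the existence of a minimizer in \eqref{recursionDP2} and the continuity of $J^T_t$. The treatment of $t=0$ when $\pi_0=\delta_{x_0}$ is routine: any $Q\in\Q_c$ is cost-equivalent there, and the continuity statement is not needed at the initial point.

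The main obstacle I expect is showing that the uniform integrability \eqref{eq_unint} is genuinely preserved under the filtering update together with the partition refinement by $Q_n$. The estimate above controls the average second moment on each cell, but on cells with small mass one gets a $1/\pi_n(Q_n^{-1}(m))$ factor; I would handle this by separately treating indices $m$ for which $\pi(Q^{-1}(m))>0$ (where the factor is bounded for large $n$) from those where $\pi(Q^{-1}(m))=0$ (where the whole summand vanishes because $J^T_{t+1}$ is finite on $\S$ and multiplied by $\pi_n(Q_n^{-1}(m))\to 0$, which also requires the polynomial growth bound on $J^T_{t+1}$ that one picks up inductively from the estimate $c(\pi,Q)\le \int\|x\|^{2}\pi(dx)$ and \eqref{sourceChannelModel11}). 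Once these bookkeeping details are in place the induction closes exactly as in Theorem~\ref{thmexistencehypothesis}.
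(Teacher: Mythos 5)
Your proposal is correct and follows essentially the same route as the paper: the same backward induction, the same uniform-integrability variants of the continuity lemmas for $c(\pi,Q)$ and for the minimization (Lemmas~\ref{lem_cconv} and \ref{ContinuityofOptimalValue1}), the same propagation of \eqref{eq_unint} through the filter via $\|f(x,w)\|^{2}\le 2K^{2}(\|x\|^{2}+\|w\|^{2})$ (Lemma~\ref{UnifIntegrableAllSamplePaths}), and the same quadratic growth bound $J^T_{t+1}(\pi)\le E[\frac{1}{T}\sum_i\|x_i\|^2]$ to kill the cells with vanishing mass. The only point to tighten is that your displayed estimate for $\hat\pi(m,\pi_n,Q_n)$ controls the full second moment rather than the tail; to get \eqref{eq_unint} itself one splits the indicator as $1_{\{\|f(x,w)\|^{2}\ge L\}}\le 1_{\{\|x\|^{2}\ge L/(4K^{2})\}}+1_{\{\|w\|^{2}\ge L/(4K^{2})\}}$, exactly as the paper does.
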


To prove Theorem~\ref{thmexistencehypothesis1} we need to modify the
proof of Theorem~\ref{thmexistencehypothesis} only in view of the
unboundedness of the cost, which affects the proof of the continuity
of $c(\pi,Q)$ and $ E\bigl[J^T_{t+1}(\pi_{t+1})| \pi_t=\pi,Q_t=Q \bigr]$.

We first establish the continuity of $c(\pi, Q)$ in a more restricted
sense than in Lemma~\ref{continuityofc}.  We know from \eqref{eq_gammaopt}
that given $\pi_t=\pi$ and $Q_t=Q$ with cells $B_1,\ldots,B_M$,
the unique optimal
receiver policy is given, for any $m$ such that $\pi(B_m)>0$, by
\[
\gamma(m)=\int_{B_m} x \pi(dx).
\]
If $\pi(B_m)=0$, then $\gamma(m)$ is arbitrary.
Using this optimal receiver policy, define
$\bar{Q}: \R^d\to \R^d$ by
\[
\bar{Q}(x)= \gamma(Q(x)).
\]
Note that $c(\pi,Q)= \int  \big\|x-\bar{Q}(x)\big\|^2  \pi(dx) $ and that for all $m$,
\begin{eqnarray}
\int  \big\|x-\bar{Q}(x)\big\|^2 1_{\{x\in B_m\}} \pi(dx) &= &
\int_{B_m}\|x-\gamma(m)\|^2 \pi(dx) \nonumber \\
&\le & 
\int_{B_m}\|x\|^2\pi(dx) \label{eq_secmombound} 
\end{eqnarray} 
which implies
\begin{equation}
\label{eq_secmombound1}
c(\pi,Q)\le  \int \|x\|^2\pi(dx).
\end{equation}

\begin{lemma}
\label{lem_cconv}
Assume  $(\pi_n,Q_n)\to (\pi,Q)$ in $\mathcal{S} \times
 \mathcal{Q}_c$ and  $\{\pi_n\}$
satisfies the  uniform integrability condition
\eqref{eq_unint}.
Then $c(\pi_n,Q_n)\to c(\pi,Q)$.
\end{lemma}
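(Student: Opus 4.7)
The plan is to reduce the convergence of the quadratic cost to two cleaner limits using the orthogonality (centroid) decomposition. Since the optimal receiver on each non‑null cell is the centroid, we have
\[
c(\pi,Q)=\int_{\R^d}\|x\|^2\pi(dx)-V(\pi,Q),\qquad V(\pi,Q):=\sum_{i:\,\pi(B_i)>0}\frac{\bigl\|\int_{B_i}x\,\pi(dx)\bigr\|^2}{\pi(B_i)},
\]
where $B_1,\dots,B_M$ are the cells of $Q$ and $B_1^n,\dots,B_M^n$ those of $Q_n$. So it suffices to prove (i) $\int\|x\|^2\pi_n(dx)\to\int\|x\|^2\pi(dx)$ and (ii) $V(\pi_n,Q_n)\to V(\pi,Q)$. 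Part (i) is immediate from $\pi_n\to\pi$ weakly and the uniform integrability hypothesis \eqref{eq_unint}.

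For part (ii), by Lemma~\ref{lemma_sqconv}(c) we have $\pi_nQ_n\to\pi Q$ in total variation, which, as in the proof of Lemma~\ref{TotalVarConvBins}, yields $\pi_n(A\cap B_i^n)\to\pi(A\cap B_i)$ for every Borel $A$, hence in particular $\pi_n(B_i^n)\to\pi(B_i)$ and $\pi(B_i^n\bigtriangleup B_i)\to 0$ for each $i$. The core task is then to show the vector limits
\[
\int_{B_i^n}x\,\pi_n(dx)\;\longrightarrow\;\int_{B_i}x\,\pi(dx)\qquad (i=1,\dots,M).
\]
Split the difference into
\[
\int x\,1_{B_i^n}(\pi_n-\pi)(dx)\;+\;\int x\,(1_{B_i^n}-1_{B_i})\,\pi(dx).
\]
For the second piece, it is bounded by $\int\|x\|\,1_{B_i^n\bigtriangleup B_i}\pi(dx)$, which tends to $0$ because $\pi$ has a finite second moment (hence $\|x\|$ is $\pi$-uniformly integrable) and $\pi(B_i^n\bigtriangleup B_i)\to 0$. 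For the first piece, truncate at level $L$: bound it by $L\,d_{TV}(\pi_n,\pi)+\int_{\|x\|>L}\|x\|(\pi_n+\pi)(dx)$. The first term vanishes as $n\to\infty$ since $\pi_n\to\pi$ in total variation (this is where the \emph{total variation} upgrade in $\S$, provided by Lemma~\ref{keyTechnicalLemma}(a), is essential); the second term is at most $L^{-1}\sup_n\int\|x\|^2(\pi_n+\pi)(dx)$, which is bounded by the uniform integrability condition and made arbitrarily small by taking $L\to\infty$ after $n\to\infty$.

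The remaining subtle point (which I expect to be the main nuisance) is the treatment of cells where $\pi(B_i)=0$ but $\pi_n(B_i^n)>0$, so that the corresponding term in $V(\pi_n,Q_n)$ has a vanishing denominator. For such indices, Cauchy--Schwarz gives
\[
\frac{\bigl\|\int_{B_i^n}x\,\pi_n(dx)\bigr\|^2}{\pi_n(B_i^n)}\;\le\;\int_{B_i^n}\|x\|^2\pi_n(dx),
\]
and because $\pi_n(B_i^n)\to 0$ while $\{\pi_n\}$ is uniformly integrable in $\|x\|^2$, the right-hand side tends to $0$ (for any $\epsilon>0$ pick $L$ with $\sup_n\int_{\|x\|\ge L}\|x\|^2\pi_n(dx)<\epsilon$, then bound by $L^2\pi_n(B_i^n)+\epsilon$). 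Thus the degenerate‑cell terms contribute $0$ in the limit. Assembling the limits of numerators and denominators over the cells with $\pi(B_i)>0$, and discarding the asymptotically negligible degenerate cells, yields $V(\pi_n,Q_n)\to V(\pi,Q)$, and combined with (i) the conclusion $c(\pi_n,Q_n)\to c(\pi,Q)$ follows.
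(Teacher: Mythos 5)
Your proof is correct, but it assembles the ingredients differently from the paper. The paper works directly with $c(\pi_n,Q_n)=\int\|x-\bar{Q}_n(x)\|^2\pi_n(dx)$ cell by cell: it first shows the optimal reconstruction points converge, $\gamma_n(m)\to\gamma(m)$, then uses the resulting uniform bound $\|x-\bar{Q}_n(x)\|^2\le 2\|x\|^2+2D_m$ on non-null cells together with uniform integrability to pass to the limit in each cell's distortion integral, and finally kills the null cells via $c$'s domination by the second moment. You instead use the orthogonality (variance) decomposition $c(\pi,Q)=\int\|x\|^2\pi(dx)-V(\pi,Q)$, which reduces everything to (i) convergence of the global second moment and (ii) convergence of the unnormalized centroids $\int_{B_i^n}x\,\pi_n(dx)$ and the masses $\pi_n(B_i^n)$, with the degenerate cells handled by Cauchy--Schwarz. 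The technical core is the same in both arguments -- the symmetric-difference convergence $\pi_n(B_i^n\bigtriangleup B_i)\to 0$ from total-variation convergence of $\pi_nQ_n$, a truncation-plus-uniform-integrability step for the first-moment integrals over cells, and the observation that cells with $\pi(B_i)=0$ contribute vanishing second-moment mass -- but your decomposition lets you avoid controlling the integrand $\|x-\bar{Q}_n(x)\|^2$ uniformly (the paper's \eqref{eq_unibound}--\eqref{eq_unint1}), replacing that analytic step with algebra on numerators and denominators. The only points worth making explicit are that $\int\|x\|^2\pi(dx)<\infty$ (by Fatou and \eqref{eq_unint}, needed for your second piece) and that $\pi_n(B_i^n)>0$ eventually on the non-degenerate cells so the ratio in $V(\pi_n,Q_n)$ is well defined; both are immediate.
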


\emph{Proof.} \
If $B^n_1,\ldots,B^n_M$ denote the  cells of $Q_n$ and let
$B_1,\ldots,B_M$ be the cells of $Q$. By \eqref{eq_pnmbconv}
we have $\pi_n(B_m^n)\to \pi(B_m)$ and $\pi_n(B_m^n\bigtriangleup
B_m)\to 0$.  Let $I=\bigl\{m\in \{1,\ldots,M\}: \pi(B_m) >0\bigr\}$.
We have for  any $L>0$ and $m\in I$,
\[
\int_{B_m^n} x  1_{\{\|x\|^2< L\}} \pi_n(dx) \to  \int_{B_m} x  1_{\{\|x\|^2<
  L\}} \pi(dx).
\]
This and  a standard truncation argument that makes  use of
\eqref{eq_unint} imply
\[
\int_{B_m^n} x  \pi_n(dx) \to  \int_{B_m} x   \pi(dx),\quad m\in I
\]
so the optimal receiver policy $\gamma_n$ for $Q_n$ satisfies
$\gamma_n(m)\to \gamma(m)$ for all $m\in I$. In particular, this
implies that for all $m\in I$,
\[
D_m\coloneqq   \sup_{n\ge 1} \sup_{x\in  B_m^n} \|\bar{Q}_n(x)\|^2 =\sup_{n\ge 1}
\gamma_n(m)   <\infty.
\]
In turn,  the parallelogram law gives for $m\in I$ and  $x\in B_m^n$,
\begin{equation}
  \label{eq_unibound}
\|x-\bar{Q}_n(x)\|^2 \le 2\|x\|^2 + 2\| \bar{Q}_n(x)\|^2 \le 2\|x\|^2 +2D_m
\end{equation}
so for $m\in I$ we obtain
\begin{eqnarray}
\lefteqn{ \lim_{L \to \infty} \sup_{n\ge 1} \int_{B_m^n} \pi_n(dx)
  \big\|x-\bar{Q}_n(x)\big\|^2 1_{\{ \|x\|^2  \geq L \}}
}\nonumber \\
& \le  &    \lim_{L \to \infty} \sup_{n\ge 1} \int_{\R^d}
\pi_n(dx)   \bigl( 2\|x\|^2+ 2D_m\bigr)  1_{\{ \|x\|^2\ge L  \}} \nonumber \\
&=& 0 ,\label{eq_unint1}
\end{eqnarray}
where the second  limit is zero due to \eqref{eq_unint}.

Since
$\pi_n\to \pi$ in total variation,  $\pi_n(B_m^n)\to \pi(B_m)$ and
$\pi_n(B_m^n\bigtriangleup
B_m)\to 0$, and since   $\|x-\bar{Q}_n(x)\|^2$ is
uniformly bounded if $\|x\|^2 < L$ by \eqref{eq_unibound}, we have
\begin{eqnarray*}
\lefteqn{
 \int_{B_m^n} \pi_n(dx)
  \big\|x-\bar{Q}_n(x)\big\|^2 1_{\{ \|x\|^2  < L \}} } \qquad \\
&\to &  \int_{B_m} \pi(dx)
  \big\|x-\bar{Q}(x)\big\|^2 1_{\{ \|x\|^2  < L \}} .
\end{eqnarray*}
Then uniform integrability \eqref{eq_unint1} and a standard
truncation argument yield for  $m\in I$
\begin{equation}
\label{eq_bmconv}
 \int_{B_m^n} \pi_n(dx)
  \big\|x-\bar{Q}_n(x)\big\|^2 \to
 \int_{B_m} \pi(dx)
  \big\|x-\bar{Q}(x)\big\|^2.
\end{equation}

Assume  $m\notin I$. Then we have
\[
 \int_{B_m^n}\|x\|^2 \pi_n(dx)
\le  \int_{\{\|x\|^2\ge L\}   }\|x\|^2 \pi_n(dx)  + L\pi_n(B_m^n) \to 0
\]
from  \eqref{eq_unint} and since  $\pi_n(B_m^n)\to 0$. In view of
\eqref{eq_secmombound} we obtain
\[
 \int_{B_m^n} \pi_n(dx)
  \big\|x-\bar{Q}_n(x)\big\|^2 \to 0.
\]
This and \eqref{eq_bmconv} give
\begin{eqnarray*}
  c(\pi_n,Q_n) &= &  \int_{\R^d}\pi_n(dx)
  \big\|x-\bar{Q}_n(x)\big\|^2  \\
&  &  \to 
 \int_{\R^d} \pi(dx)
  \big\|x-\bar{Q}(x)\big\|^2 = c(\pi,Q)
\end{eqnarray*}
which proves the lemma. \qed

The following variant of Lemma~\ref{ContinuityofOptimalValue} lemma will
be useful.

\begin{lemma}\label{ContinuityofOptimalValue1}
  Assume $F: \S \times \Q_c \to \mathbb{R}$ is continuous in the sense that if
$(\pi_n,Q_n) \to (\pi,Q)$ in $\mathcal{S} \times \mathcal{Q}_c$ and $\{\pi_n\}$
satisfies the uniform integrability condition \eqref{eq_unint}, then
$F(\pi_n,Q_n) \to F(\pi,Q)$.
  Then $\inf_{Q\in \mathcal{Q}_c} F(\pi,Q)$ is achieved by some $Q$ in
  $\mathcal{Q}_c $ and $\min_Q F(\pi,Q)$ is continuous in $\pi$ in the
  sense that if $\pi_n\to \pi$ in $\S$ and $\{\pi_n\}$ is uniformly integrable,
  then  $\min_Q F(\pi_n,Q)\to \min_Q F(\pi,Q)$.
\end{lemma}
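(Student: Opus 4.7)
The plan is to mirror the proof of Lemma~\ref{ContinuityofOptimalValue}, tracking the uniform integrability condition \eqref{eq_unint} carefully at every place where continuity of $F$ is invoked. Throughout, I will implicitly work with $\pi\in\S$ having finite second moment (so that the constant sequence $\pi_n\equiv\pi$ trivially satisfies \eqref{eq_unint}); this is automatic in the applications of the lemma, since under Assumption~\ref{AssumptionA'} all reachable conditional measures have finite second moments.

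For the existence of a minimizer, fix such a $\pi$. If $Q_n\to Q$ in $\Q_c$, then $(\pi,Q_n)\to(\pi,Q)$ in $\S\times\Q_c$ and the constant sequence $\{\pi\}$ satisfies \eqref{eq_unint}. By the hypothesis on $F$ we obtain $F(\pi,Q_n)\to F(\pi,Q)$, so $Q\mapsto F(\pi,Q)$ is continuous on $\Q_c$. Since $\Q_c$ is compact by Lemma~\ref{lemma_sqconv}(b), the infimum is attained.

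For the continuity claim, let $\pi_n\to\pi$ in $\S$ with $\{\pi_n\}$ uniformly integrable. Pick minimizers $Q_n$ for $\pi_n$ and $Q$ for $\pi$. Exactly as in Lemma~\ref{ContinuityofOptimalValue},
\[
\bigl|\min_{Q'} F(\pi_n,Q') - \min_{Q'} F(\pi,Q')\bigr| \le \max\bigl(F(\pi_n,Q)-F(\pi,Q),\; F(\pi,Q_n)-F(\pi_n,Q_n)\bigr).
\]
The first term tends to $0$ because $(\pi_n,Q)\to(\pi,Q)$ and $\{\pi_n\}$ is uniformly integrable, so the hypothesis applies directly. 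For the second term, suppose for contradiction that along some subsequence $\{n_k\}$ we have $|F(\pi,Q_{n_k})-F(\pi_{n_k},Q_{n_k})|\ge\epsilon$. By compactness of $\Q_c$ extract a further subsequence $\{n_k'\}$ along which $Q_{n_k'}\to Q'$ in $\Q_c$. Then $(\pi,Q_{n_k'})\to(\pi,Q')$ with constant (hence uniformly integrable) first coordinate, and $(\pi_{n_k'},Q_{n_k'})\to(\pi,Q')$ with first coordinate a subsequence of the uniformly integrable family $\{\pi_n\}$, which itself is uniformly integrable. Two applications of the hypothesis yield $F(\pi,Q_{n_k'})\to F(\pi,Q')$ and $F(\pi_{n_k'},Q_{n_k'})\to F(\pi,Q')$, contradicting the lower bound $\epsilon$.

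The only substantive departure from the bounded-cost version is the bookkeeping of uniform integrability: the continuity hypothesis is only available when the first-coordinate sequence satisfies \eqref{eq_unint}. The main thing to check is that in each invocation — along $(\pi_n,Q)$, along $(\pi_{n_k'},Q_{n_k'})$, and along $(\pi,Q_{n_k'})$ — the first coordinate is either constant or a subsequence of the given uniformly integrable family, both of which preserve \eqref{eq_unint}. This is a verification rather than a genuine obstacle, so I expect the proof to go through essentially verbatim.
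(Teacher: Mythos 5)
Your proposal is correct and follows essentially the same route as the paper, whose proof simply states that existence follows from compactness of $\mathcal{Q}_c$ and that the rest ``follows verbatim'' the proof of Lemma~\ref{ContinuityofOptimalValue} with the convergent sequence also assumed uniformly integrable. Your explicit observation that invoking the continuity hypothesis along a constant first coordinate requires $\pi$ to have finite second moment (so that \eqref{eq_unint} holds for the constant sequence) is a detail the paper glosses over, and you justify it correctly by noting that this holds for all measures arising in the applications under Assumption~\ref{AssumptionA'}.
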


\emph{Proof.} \ The existence of an optimal $Q$ for any $\pi \in \mathcal{S}$ is
a consequence of the compactness of $\mathcal{Q}_c$.  The rest of the proof
follows verbatim the proof of Lemma~\ref{ContinuityofOptimalValue} with the
convergence sequence $\{\pi_n\}$ also assumed to be uniformly integrable. \qed

\medskip

Lemmas \ref{lem_cconv} and \ref{ContinuityofOptimalValue1} prove
Theorem~\ref{thmexistencehypothesis1} for $t=T-1$. To prove the theorem
for all $t$, we apply backward induction. Assume that the both
statements of the theorem hold for $t'=T-1,\ldots,t+1$.

Recall the conditional distribution  $\hat{\pi}(m,\pi,Q)$ defined in
\eqref{updateQuantizationNext}. The following lemma shows that the uniform
integrability condition is inherited in the induction step.

\begin{lemma}\label{UnifIntegrableAllSamplePaths}
  Assume $(\pi_n,Q_n) \to (\pi,Q)$ in $\mathcal{S} \times \mathcal{Q}_c$ and
  $\{\pi_n\}$ satisfies the uniform integrability condition \eqref{eq_unint}. If
  cell $B_m$ of $Q$ satisfies $\pi(B_m)>0$, then
  $\{ \hat{\pi}(m,\pi_n,Q_n)\}$ is uniformly integrable in the sense of \eqref{eq_unint}.
\end{lemma}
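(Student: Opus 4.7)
The plan is to unwind the definition of $\hat{\pi}(m,\pi_n,Q_n)$ via Fubini, use the growth bound on $f$ to reduce the tail event $\{\|z\|^2 \ge L\}$ to the union of $\{\|x\|^2$ large$\}$ and $\{\|w\|^2$ large$\}$, and then control each piece separately, the first by the hypothesis \eqref{eq_unint} on $\{\pi_n\}$ and the second by the finite second moment of $w$.

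First, for $n$ large enough that $\pi_n(B_m^n) \ge \pi(B_m)/2 > 0$, which holds by \eqref{eq_pnmbconv} since $\pi(B_m)>0$, the measure $\hat{\pi}(m,\pi_n,Q_n)$ has density $\frac{1}{\pi_n(B_m^n)}\int_{B_m^n} \phi(z|x)\pi_n(dx)$. Applying Fubini and recalling that $\phi(\,\cdot\,|x)$ is the law of $f(x,w_t)$ gives
\[
\int \|z\|^2 1_{\{\|z\|^2 \ge L\}}\,\hat{\pi}(m,\pi_n,Q_n)(dz) = \frac{1}{\pi_n(B_m^n)} \int_{B_m^n} E_w\!\bigl[\|f(x,w)\|^2 1_{\{\|f(x,w)\|^2 \ge L\}}\bigr] \pi_n(dx),
\]
where the prefactor is uniformly bounded above by $2/\pi(B_m)$.

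Next I invoke Assumption~\ref{AssumptionA'}(i) to write $\|f(x,w)\|^2 \le 2K^2(\|x\|^2 + \|w\|^2)$, so the event $\{\|f(x,w)\|^2 \ge L\}$ is contained in $\{\|x\|^2 \ge L'\} \cup \{\|w\|^2 \ge L'\}$ with $L' = L/(4K^2)$. This yields the pointwise bound
\[
\|f(x,w)\|^2 1_{\{\|f(x,w)\|^2 \ge L\}} \le 2K^2(\|x\|^2 + \|w\|^2)\bigl(1_{\{\|x\|^2 \ge L'\}} + 1_{\{\|w\|^2 \ge L'\}}\bigr).
\]
Expanding, taking $E_w$ over the $w$-factors, and then integrating in $x$ against $\pi_n$ (extended from $B_m^n$ to all of $\R^d$), the right-hand side is dominated by a finite sum of four terms, each a product of one of $\int_{\{\|x\|^2 \ge L'\}} \|x\|^2 \pi_n(dx)$, $\pi_n\bigl(\{\|x\|^2 \ge L'\}\bigr)$, $\int \|x\|^2 \pi_n(dx)$, $1$ with one of $1$, $E[\|w\|^2]$, $P(\|w\|^2 \ge L')$, $E[\|w\|^2 1_{\{\|w\|^2 \ge L'\}}]$.

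The key observation is then that \eqref{eq_unint} makes both $\int_{\{\|x\|^2 \ge L'\}} \|x\|^2 \pi_n(dx)$ and $\pi_n\bigl(\{\|x\|^2 \ge L'\}\bigr)$ tend to zero uniformly in $n$, and it also implies $\sup_n \int \|x\|^2 \pi_n(dx) < \infty$, which is needed to handle the term involving $P(\|w\|^2 \ge L')$; meanwhile Assumption~\ref{AssumptionA'}(iii) gives $E[\|w\|^2] < \infty$ and hence $P(\|w\|^2 \ge L'), E[\|w\|^2 1_{\{\|w\|^2 \ge L'\}}] \to 0$. Combined with the uniform bound on the prefactor $1/\pi_n(B_m^n)$, this yields the desired uniform integrability for all $n$ sufficiently large, and the finitely many remaining $n$ are absorbed since each individual $\hat{\pi}(m,\pi_n,Q_n)$ has a finite second moment by the same bookkeeping. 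The only mildly delicate point is ensuring the denominator stays uniformly positive, which \eqref{eq_pnmbconv} and $\pi(B_m)>0$ guarantee.
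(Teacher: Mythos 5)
Your proof is correct and follows essentially the same route as the paper's: the Fubini rewrite in terms of $f(x,w)$, the bound $\|f(x,w)\|^2\le 2K^2(\|x\|^2+\|w\|^2)$ with the tail-event inclusion $\{\|f(x,w)\|^2\ge L\}\subset\{\|x\|^2\ge L/(4K^2)\}\cup\{\|w\|^2\ge L/(4K^2)\}$, and the resulting four-term decomposition handled by \eqref{eq_unint} and $E[\|w\|^2]<\infty$. Your explicit remark that the denominator $\pi_n(B_m^n)$ stays uniformly bounded away from zero for large $n$ is a small but welcome clarification of a point the paper treats implicitly.
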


\emph{Proof.} \ Let $B_m^n$ denote the $m$th cell of $Q_n$.  Since $\pi(B_m)>0$,
we have $\pi_n(B_m^n)>0$ for $n$ large enough, so
\begin{eqnarray*}
  \lefteqn{ \int_{\R^d} \|z\|^2 \hat{\pi}(m,\pi_n,Q_n)(z)1_{\{\|z\|^2\ge L\}} \,
    dz = }
  \\*
& & \hspace{-20pt} \frac{1}{\pi(B_m^n)} \!\int\limits_{\R^d} \!
\int\limits_{\R^d} \! \|f(x,w)\|^2
1_{\{\|f(x,w)\|^2\ge 
    L\}}  1_{\{x\in B_m^n\}}\pi_n(dx)\nu_w(dw).
\end{eqnarray*}
Since $\|f(x,w)\|^2\le 2K^2 \big(\|x\|^2 +\|w\|^2\big)$ by Assumption~\ref{AssumptionA'}(i), we have
\[
  1_{ \{\|f(x,w)\|^2\ge L  \}}\le    1_{ \{\|x\|^2\ge L/(4K^2)  \}}+
  1_{ \{\|w\|^2\ge L/(4 K^2) \}}
\]
and so
\begin{eqnarray*}
  \lefteqn{ \int_{\R^d} \|z\|^2 \hat{\pi}(m,\pi_n,Q_n)(z)1_{\{\|z\|^2\ge L\}} \, dz}
  \nonumber \\*
  & & \hspace{-30pt} \le \frac{1}{\pi(B_m^n)} \int_{\R^d}  \int_{\R^d}  2K^2\bigl( \|x\|^2 +
  \|w\|^2\bigr)  \times  \\
  & & \times \bigl(    1_{ \{\|x\|^2\ge L/(4K^2)  \}}+
  1_{ \{\|w\|^2\ge L/(4 K^2) \}} \bigr) \pi_n(dx)\nu_w(dw) \nonumber \\
  && \hspace{-30pt} =  \frac{2K^2}{\pi(B_m^n)} \int_{\R^d} \|x\|^2 1_{ \{\|x\|^2\ge L/(4K^2)  \}}
  \pi_n(dx)    \\
  & & \hspace{-20pt}  \mbox{} +
  \frac{2K^2}{\pi(B_m^n)} \left( \int_{\R^d}\!  \|x\|^2 \pi_n(dx)\right)
  \left( \int_{\R^d} \!  1_{ \{\|w\|^2\ge L/(4K^2)  \}}\nu_w(dw) \right)
  \\
  & & \hspace{-20pt}  \mbox{} +
  \frac{2K^2}{\pi(B_m^n)} \left( \int_{\R^d}\! \|w\|^2 \nu_w(dw) \right)
  \left( \int_{\R^d}  \! 1_{ \{\|x\|^2\ge L/(4K^2)  \}} \pi_n(dx)\right)
  \\
  && \hspace{-20pt}   \mbox{} +  \frac{2K^2}{\pi(B_m^n)} \int_{\R^d} \!\|w\|^2
  1_{  \{\|w\|^2\ge L/(4K^2)  \}} 
  \nu_w(dw).  
\end{eqnarray*}
Recall that $\pi_n(B_m^n)\to \pi(B_m)$.  Thus the first term in sum above
converges to zero as $L\to \infty$ uniformly in $n$ by \eqref{eq_unint}. The
uniform convergence to zero of the other three terms in the sum follows since
$\int \|w\|^2\nu_w(dw)<\infty$ and $\sup_{n\ge 1} 
\int \|x\|^2\pi_n(x)\, dx<\infty$ by \eqref{eq_unint}.  This proves
\[
\lim_{L\to \infty}\sup_{n\ge 1}  \int_{\R^d} \|z\|^2
\hat{\pi}(m,\pi_n,Q_n)(z)1_{\{\|z\|^2\ge L\}} \, dz =0
\]
as claimed. \qed

\medskip

The next lemma shows the continuity of $
E\bigl[J^T_{t+1}(\pi_{t+1}) |\pi_t=\pi,Q_t=Q\bigr]$.

\begin{lemma}
\label{lem_condentcont}
$E\bigl[J^T_{t+1}(\pi_{t+1}) |\pi_t=\pi,Q_t=Q\bigr]$ is continuous on $\S\times
\Q_c$ in the sense of Lemma~\ref{ContinuityofOptimalValue1}. 
\end{lemma}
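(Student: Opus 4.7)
The plan is to follow the same decomposition used in the bounded-cost proof of Theorem~\ref{thmexistencehypothesis}, namely to write
\[
E\bigl[J^T_{t+1}(\pi_{t+1})|\pi_t=\pi,Q_t=Q\bigr]=\sum_{m=1}^M J^T_{t+1}\bigl(\hat{\pi}(m,\pi,Q)\bigr)\,\pi\bigl(Q^{-1}(m)\bigr)
\]
as in \eqref{eq_totalexp}, and then show termwise convergence along any sequence $(\pi_n,Q_n)\to(\pi,Q)$ in $\S\times\Q_c$ for which $\{\pi_n\}$ is uniformly integrable in the sense of \eqref{eq_unint}. The earlier Lemma~\ref{TotalVarConvBins} and the convergence $\pi_n(Q_n^{-1}(m))\to\pi(Q^{-1}(m))$ derived in \eqref{eq_pnmbconv} remain valid here (they only used total-variation convergence $\pi_nQ_n\to\pi Q$, which is guaranteed by Lemma~\ref{lemma_sqconv}(c)).

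First I would handle the indices $m$ with $\pi(Q^{-1}(m))>0$. For these, Lemma~\ref{TotalVarConvBins} gives $\hat{\pi}(m,\pi_n,Q_n)\to\hat{\pi}(m,\pi,Q)$ in total variation (hence in $\S$), and the new Lemma~\ref{UnifIntegrableAllSamplePaths} shows that $\{\hat{\pi}(m,\pi_n,Q_n)\}$ itself satisfies the uniform integrability condition \eqref{eq_unint}. The induction hypothesis then yields $J^T_{t+1}(\hat{\pi}(m,\pi_n,Q_n))\to J^T_{t+1}(\hat{\pi}(m,\pi,Q))$, and combined with $\pi_n(Q_n^{-1}(m))\to\pi(Q^{-1}(m))$, the corresponding term converges as required.

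The main obstacle is the indices $m$ with $\pi(Q^{-1}(m))=0$, because the cost is now unbounded, so one cannot simply invoke boundedness as in the proof of Theorem~\ref{thmexistencehypothesis}. The key step is a growth bound on the value function. By iterating the inequality \eqref{eq_secmombound1}, namely $c(\pi',Q')\le\int\|x\|^2\pi'(dx)$, together with Assumption~\ref{AssumptionA'}(i) (which gives $\|x_s\|^2\le \hat K(\|x_0\|^2+\sum_{i<s}\|w_i\|^2)$ and hence an affine-in-second-moment bound on $E_{\pi'}[\|x_s\|^2]$), one obtains constants $C_1,C_2$ depending on $T$ such that
\[
J^T_{t+1}(\pi')\le C_1\int_{\R^d}\|x\|^2\,\pi'(dx)+C_2
\]
for every $\pi'\in\S$. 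I would then use the explicit formula for $\hat{\pi}(m,\pi_n,Q_n)$ together with Assumption~\ref{AssumptionA'}(i) to estimate
\[
\pi_n(Q_n^{-1}(m))\!\int_{\R^d}\!\|z\|^2\,\hat{\pi}(m,\pi_n,Q_n)(dz)\le 2K^2\!\int_{B_m^n}\!\|x\|^2\pi_n(dx)+2K^2\pi_n(B_m^n)\!\int\!\|w\|^2\nu_w(dw),
\]
which, since $\pi_n(B_m^n)\to\pi(B_m)=0$ and $\{\pi_n\}$ is uniformly integrable, tends to $0$. Multiplying the growth bound on $J^T_{t+1}$ by $\pi_n(Q_n^{-1}(m))$ then gives
\[
\pi_n(Q_n^{-1}(m))\,J^T_{t+1}(\hat{\pi}(m,\pi_n,Q_n))\longrightarrow 0,
\]
which matches the limit $\pi(Q^{-1}(m))J^T_{t+1}(\hat{\pi}(m,\pi,Q))=0$ for these indices.

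Summing the contributions from both classes of indices yields the desired convergence, completing the proof. The only subtlety worth double-checking is that the constants $C_1,C_2$ in the growth bound on $J^T_{t+1}$ can indeed be obtained by a simple recursion on the horizon, which follows by combining \eqref{eq_secmombound1} with the second-moment recursion induced by Assumption~\ref{AssumptionA'}(i) and (iii); this is essentially the argument already used in Remark~\ref{rem3}(ii).
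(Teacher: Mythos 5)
Your proposal is correct and follows essentially the same route as the paper: the decomposition \eqref{eq_totalexp}, Lemmas~\ref{TotalVarConvBins} and \ref{UnifIntegrableAllSamplePaths} plus the induction hypothesis for cells with $\pi(Q^{-1}(m))>0$, and the second-moment estimates from \eqref{eq_secmombound1} and Assumption~\ref{AssumptionA'}(i) for vanishing cells. The only (harmless) difference is cosmetic: you first extract an affine bound $J^T_{t+1}(\pi')\le C_1\int\|x\|^2\pi'(dx)+C_2$ and then show the scaled second moment of $\hat{\pi}(m,\pi_n,Q_n)$ tends to zero, whereas the paper carries the indicator $1_{\{x_{t,n}\in B_m^n\}}$ through the dynamics recursion \eqref{eq_xtbound} directly--the same estimates rearranged.
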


\emph{Proof.}\ Assume $(\pi_n,Q_n) \to (\pi,Q)$ in $\mathcal{S} \times
\mathcal{Q}_c$ and $\{\pi_n\}$ satisfies the uniform integrability condition
\eqref{eq_unint}. Let $B_1,\ldots,B_M$ and $B_1^n,\ldots,B_M^n$ denote the cells
of $Q$ and $Q_n$, respectively.  In view of \eqref{eq_totalexp} and the fact
that $\pi_n(B_m^n)\to \pi(B_m)$, we need to prove that for all $m$ with
$\pi(B_m)>0$,
\begin{equation}
\label{eq_normalconv}
  J^T_{t+1}(\hat{\pi}(m,\pi_n,Q_n))\to   J^T_{t+1}(\hat{\pi}(m,\pi,Q))
\end{equation}
and for $m$ with $\pi(B_m)=0$,
\begin{equation}
\label{eq_zeroconv}
    J^T_{t+1}(\hat{\pi}(m,\pi_n,Q_n))\pi_n(B_m^n)\to 0.
\end{equation}

The convergence in \eqref{eq_normalconv} follows from
Lemmas~\ref{TotalVarConvBins} and \ref{UnifIntegrableAllSamplePaths}, and the induction
hypothesis that $J^T_{t+1}(\,\cdot\,)$ is continuous  along convergent and  uniformly
integrable sequences  in $\S$.

To prove \eqref{eq_zeroconv} first note that from
\eqref{eq_secmombound1} we have
\[
J^T_{t+1}(\pi_{t+1})\le E\biggl[ \frac{1}{T} \sum_{i=t+1}^{T-1}
\|x_{i}\|^2  \biggr],
\]
where $x_{t+1}$ has distribution $\pi_{t+1}$ and $x_{i}=f(x_{i-1},w_{i-1})$,
where $w_{t+1},\ldots,w_{T-1}$ are independent of $x_{t+1}$.  Accordingly,
\begin{eqnarray}
\lefteqn{ J^T_{t+1}(\hat{\pi}(m,\pi_n,Q_n))\pi_n(B_m^n)  }\nonumber\qquad  \\
& \le & 
 E\biggl[ \frac{1}{T} \biggl(\, \sum_{i=t+1}^{T-1}
\|x_{i,n}\|^2\biggr)1_{\{x_{t,n}\in B_m^n\}}  \biggr], \label{eq_xtnbound}
 \end{eqnarray}
where $x_{t,n}$ has distribution $ \pi_n$.

Now note that the assumption $\|f(x,w)\le K\big( \|x\|+\|w\|)$ and the inequality $
\|x+y\|^2 \le 2\|x\|^2 +2\|y\|^2$ imply the upper bound
\begin{equation}
  \label{eq_xtbound}
\|x_{t+j,n}\|^2 \le (2K^2)^j  \|x_{t,n}\|^2 + \sum_{i=0}^{j-1} (2K^2)^{j-i}
  \|w_{t+i}\|^2.
\end{equation}
Thus for any $j=1,\ldots,T-t-1$ we have
\begin{eqnarray*}
\lefteqn{  E\bigl[\|x_{t+j,n}\|^2 1_{\{x_{t,n}\in B_m^n\}}  \bigr] }
\nonumber  \\*
&\le & (2K^2)^j  E\biggl[  \|x_{t,n}\|^2 1_{\{x_{t,n}\in   B_m^n\}}  \nonumber
\\*
  & & \qquad \mbox{}   + \sum_{i=1}^j
  (2K^2)^{1-i}\|w_{t+i-1}\|^2 1_{\{x_{t,n}\in   B_m^n\}}     \biggr] \nonumber\\
&=&  (2K^2)^j  \biggl( E\bigl[ \bigl\|
   x_{t,n}\bigr\|^2  1_{\{x_{t,n}\in      B_m^n\}}
   \bigr]  \nonumber  \\
& & \qquad \qquad \mbox{}  + \sum_{i=1}^{j} (2K^2)^{1-i}E\bigl[
   \bigl\| w_{t+i-1} \bigr\|^2  \bigr] \pi_n(B_m^n)\biggr),  
\end{eqnarray*}
where we used the independence of $w_t,\ldots,w_{T-1}$ and $x_{t,n}$.  The
first expectation in  the last equation  converges to zero as $n\to \infty$ since
$\{\pi_n\}$ is uniformly integrable and $\pi_n(B_n^m)\to \pi(B_m)=0$, while the
second one converges to zero since $\pi_n(B_n^m)\to 0$.  This proves
that the right side of \eqref{eq_xtnbound} converges to zero, finishing the
proof of the lemma. \qed

\medskip

Lemmas~\ref{lem_cconv}  and \ref{lem_condentcont} show that
\[
F_t(\pi,Q)\coloneqq   \frac{1}{T}c(\pi,Q) + E\bigl[J^T_{t+1}(\pi_{t+1}) |\pi_t=\pi,Q_t=Q\bigr]
\]
satisfies the conditions of
Lemma~\ref{ContinuityofOptimalValue1}, which in turn proves
the induction hypothesis for $t'=t$. For the last step $t=0$ a similar argument
as in the proof of  Theorem~\ref{thmexistencehypothesis} applies (but here we
also need the condition $E_{\pi_0}\bigl[\|x_0\|^2\bigr]<\infty$).
 This finishes the proof of
Theorem~\ref{thmexistencehypothesis1}. \qed

\subsection{Proof of Theorem~\ref{thm_QWopt}} \label{Opt_PW_policy}

Define
\[
J_{\pi^*}(T) \coloneqq \inf_{\Pi \in \Pi_A} \inf_{\gamma}   E^{\Pi,\gamma}_{\pi^*}\biggl[\frac{1}{T}   \sum_{t=0}^{T-1}
c_0(x_t,u_t)\biggr]
\]
and note that $\limsup_{T\to \infty} J_{\pi^*}(T) \le J_{\pi^*}$. Thus there
exists an \emph{increasing} sequence of time indices $\{T_k\}$ such that for all $k=1,2,\ldots$,
\begin{equation}
  \label{eq_kopt}
 J_{\pi^*}(T_k) \le J_{\pi^*} +\frac{1}{k}.
\end{equation}
A key
observation is that  by Theorem~\ref{WalrandVaraiyaTheorem}
for all $k$ there exists $\Pi_k =\{\heta^{(k)}_t\} \in \Pi_W$ (a Markov policy) such that
\begin{equation}
  \label{eq_kopt1}
J_{\pi^*}(\Pi_k,T_k)\coloneqq E^{\Pi_k}_{\pi^*}\biggl[\frac{1}{T_k}   \sum_{t=0}^{T_k-1}
 c(\pi_t,Q_t)\biggr] \le J_{\pi^*}(T_k) + \frac{1}{k}.
\end{equation}

Now let  $n_1=1$ and for $k=2,3,\ldots$,
choose the positive integers $n_k$ inductively as
\begin{equation}
  \label{eq_nk}
n_k  =\left\lceil   k \cdot \max \biggl( \frac{T_{k+1}}{T_k}, \frac{n_{k-1}T_{k-1}}{T_k}
\biggr) \right\rceil,
\end{equation}
where $\lceil x\rceil$ denotes the smallest  integer greater than equal to $x$.
Note that the definition of $n_k$ implies $n_k T_k\ge
k n_{k-1}T_{k-1}$. Thus letting $T_k'= n_kT_k$ for all $k$
we have 
\begin{equation}
  \label{eq_tk}
T_k'\ge kT_{k-1}',
  \end{equation}
and hence
\begin{equation}
  \label{eq_kopt2}
\lim_{k\to \infty} \frac{\sum_{l=1}^k T_l'}{T_k'} =1.
\end{equation}
Now let $N_0= 0$, $N_k=\sum_{i=1}^k T_k'$ for $k\ge 1$, and define the policy
$\Pi= \{\heta_t\} \in \Pi_W$ by piecing together, in a periodic fashion, the
initial segments of $\Pi_k$ as follows:
\begin{itemize}
\item[(1)] For $t=N_{k-1}+j T_k$, where $k\ge 1$ and  $0\le j<n_k$,  let
  $\heta_t(\,\cdot\,)\equiv \heta^{(k)}_0(\pi^*)$;

\item[(2)] For $t=N_{k-1}+j  T_k+i$, where $k\ge 1$, $0\le j<n_k$, and
  $1\le i< T_k$,    let
  $\heta_t= \heta^{(k)}_i$.
\end{itemize}

In the rest of the proof we show that $\Pi$ is optimal. First note that by the
stationarity of $\{x_t\}$  we have, for all $k\ge 1$ and $j=0,\ldots,n_k-1$,
\[
  E^{\Pi}_{\pi^*}\biggl[\,\sum_{t=N_{k-1}+jT_k}^{N_{k-1}+(j+1)T_k-1}
  c(\pi_t,Q_t)\biggr] = T_k J_{\pi^*}(\Pi_k,T_k).
\]
Hence, for  $T=N_{k-1}+j  T_k+i$, where $k\ge 3$, $0\le j<n_k$, and
  $0\le i< T_k$,   we have
\begin{eqnarray}
\lefteqn{      E^{\Pi}_{\pi^*}\biggl[\frac{1}{T}\sum_{t=0}^{T-1}
  c(\pi_t,Q_t)\biggr]  }  \nonumber \quad \\*
&=&     E^{\Pi}_{\pi^*}\biggl[\frac{1}{T}\sum_{t=0}^{N_{k-2}-1}
c(\pi_t,Q_t)\biggr] +     E^{\Pi}_{\pi^*}\biggl[\frac{1}{T}\sum_{t=N_{k-2}}^{T-1}
  c(\pi_t,Q_t)\biggr] \nonumber \\
&=& \frac{1}{T} \sum_{l=1}^{k-2} T_l' J_{\pi^*}(\Pi_l,T_l)   \label{eq_JTbound} \\
& & \mbox{}  +  \frac{1}{T} \Bigl(
T_{k-1}'J_{\pi^*}(\Pi_{k-1},T_{k-1})+ jT_k
J_{\pi^*}(\Pi_k,T_k)\Bigr)  \label{eq_JTbound1} \\
& & \mbox{} + E^{\Pi}_{\pi^*}\biggl[\frac{1}{T}\sum_{t=N_{k-1}+jT_k}^{T-1}
  c(\pi_t,Q_t)\biggr]  \label{eq_JTbound2}
\end{eqnarray}
(the last sum is empty if $i=0$).

Let $\hat{C}$ be a uniform upper bound on the cost $c_0$. Since $T\ge N_{k-1}$,
\eqref{eq_JTbound} can be bounded as
\begin{eqnarray}
  \frac{1}{T} \sum_{l=1}^{k-2} T_l' J_{\pi^*}(\Pi_l,T_l)  &\le &
  \hat{C}  \frac{1}{N_{k-1}}\sum_{l=1}^{k-2} T_l' =   \hat{C}
  \frac{N_{k-2}}{N_{k-1}} \nonumber \\
  &=&  \hat{C}
  \frac{\frac{N_{k-2}}{T_{k-2}'}}{\frac{N_{k-2}}{T_{k-2}'} +
    \frac{T_{k-1}'}{T_{k-2}'}} \to 0  \label{eq_Tbound1}
\end{eqnarray}
as $k\to \infty$ since $\frac{N_{k-2}}{T_{k-2}'}\to 1$ from \eqref{eq_kopt2} and
$\frac{T_{k-1}'}{T_{k-2}'}\ge k-1 $ from \eqref{eq_tk}. 

Since $T_{k-1}' + jT_k \le T$,  \eqref{eq_JTbound1} can be
upper bounded as
\begin{eqnarray}
\lefteqn{  \frac{1}{T} \Bigl(
T_{k-1}'J_{\pi^*}(\Pi_{k-1},T_{k-1})+ jT_k J_{\pi^*}(\Pi_k,T_k)\Bigr) }
\nonumber \quad \\
&\le &
\max\Bigl(J_{\pi^*}(\Pi_{k-1},T_{k-1}),J_{\pi^*}(\Pi_k,T_k) \Bigr).    \label{eq_Tbound2}
\end{eqnarray}
Finally, the expectation in \eqref{eq_JTbound2} is upper bounded as
\begin{eqnarray}
 E^{\Pi}_{\pi^*}\biggl[\frac{1}{T}\sum_{t=N_{k-1}+jT_k}^{T-1}
  c(\pi_t,Q_t)\biggr]  & \le &  \hat{C}
\frac{T_k}{T} \le \hat{C}
\frac{T_k}{T_{k-1}'}  \nonumber \\ 
&\le &  \frac{\hat{C}}{k-1} \to 0   \label{eq_Tbound3}
\end{eqnarray}
as $k\to \infty$, where the last inequality
holds since by \eqref{eq_nk} we have $T_k'=n_kT_k \ge k T_{k+1}$ for all $k$. 

Combining  \eqref{eq_JTbound}--\eqref{eq_Tbound3} we obtain
\[
\limsup_{T\to \infty}     E^{\Pi}_{\pi^*}\biggl[\frac{1}{T}\sum_{t=0}^{T-1}
  c(\pi_t,Q_t)\biggr]  \le \limsup_{k\to \infty}  J_{\pi^*}(\Pi_k,T_k) \le J_{\pi^*}
\]
which proves the optimality of $\Pi$.
\qed

\subsection{Proof of  Proposition
  \ref{weakLimitTheorem}} \label{StationaryOptimalProof} \emph{Proof of (a).}
Here we show that any weak limit of $\{v_t\}$ must belong to $\G$.
 For $v\in \P(\P(\R^d)\times
\Q_c)$ and $g\in \C_b(\P(\R^d)\times \Q_c )$ or $f\in \C_b(\P(\R^d))$   define
\[
\langle v, g \rangle \coloneqq \int
g(\pi,Q) v(d\pi\, dQ), \qquad
\langle v, f \rangle \coloneqq \int
f(\pi) v(d\pi\, dQ).
\]
Also define $vP\in \P(\P(\R^d))$ by
\[
vP(A) \coloneqq \int P(\pi_{t+1} \in A | \pi_t=\pi,Q_t=Q)
v(d\pi \, dQ)
\]
for any measurable $A\subset \P(\R^d)$.  Note that  $v\in \G$ is equivalent to
\begin{equation}
  \label{eq_weaklimit}
\langle vP, f \rangle   =\langle v, f\rangle \quad \text{for all $f\in
  \C_b(\P(\R^d))$}.
\end{equation}

From the definition of $v_tP$, we have for any $f\in \C_b(\P(\R^d))$,
\begin{eqnarray}
 \langle v_t, f\rangle - \langle v_t P,  f\rangle & = &
\frac{1}{t} E_{\pi_0}
\biggl[ \, \sum_{i=0}^{t-1} f(\pi_i) -  \sum_{i=1}^t f(\pi_i) \biggr] \nonumber \\*
&= &  \frac{1}{t} E_{\pi_0}\bigl[ \, f(\pi_0) -  f(\pi_t) \bigr]  \to 0  \label{eq_gvconv}
\end{eqnarray}
as $t\to \infty$. Now suppose that $v_{t_k} \to \bar{v}$ weakly along a
subsequence of 
$\{v_t\}$. Then $\langle v_{t_k}, f\rangle \to \langle \bar{v},f\rangle $ for all
$f\in \C_b(\P(\R^d))$, and \eqref{eq_gvconv} implies
\begin{equation}
  \label{eq_vtlim1}
\langle v_{t_k}P, f\rangle \to \langle \bar{v},f\rangle.
\end{equation}

The following lemma is proved  at the end of this section.

\begin{lemma}\label{weakContKernel}
The transition kernel  $ P(d\pi_{t+1} | \pi_t,Q_t)$ is continuous in the
weak-Feller sense, i.e., for any $f\in \C_b(\P(\R^d))$,
\[
Pf(\pi,Q) \coloneqq \int_{\P(\R^d)\times \Q_c} f(\pi')P( d\pi' | \pi,Q)
\]
is continuous on $\S \times \Q_c$,
\end{lemma}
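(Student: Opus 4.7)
The plan is to exploit the discrete structure of the one-step transition: given $\pi_t=\pi$ and $Q_t=Q$ with cells $B_1,\ldots,B_M$, the next belief $\pi_{t+1}$ takes at most $M$ values, namely the conditional distributions $\hat{\pi}(m,\pi,Q)$ defined in \eqref{updateQuantizationNext}, with probabilities $\pi(Q^{-1}(m))$. Hence
\[
Pf(\pi,Q) \;=\; \sum_{m=1}^M f\bigl(\hat{\pi}(m,\pi,Q)\bigr)\,\pi\bigl(Q^{-1}(m)\bigr),
\]
so continuity reduces to showing that each summand is jointly continuous in $(\pi,Q)$ on $\S \times \Q_c$.

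Assume $(\pi_n,Q_n) \to (\pi,Q)$ in $\S\times\Q_c$. The key step is to translate this convergence into convergence of the ingredients appearing in the summation. By Lemma~\ref{lemma_sqconv}(c), $\pi_n Q_n \to \pi Q$ in total variation. From \eqref{eq_pnmbconv} we then obtain $\pi_n(Q_n^{-1}(m)) \to \pi(Q^{-1}(m))$ for every $m=1,\ldots,M$. Moreover, Lemma~\ref{TotalVarConvBins} gives $\hat{\pi}(m,\pi_n,Q_n)\to \hat{\pi}(m,\pi,Q)$ in total variation (and hence in the weak topology of $\P(\R^d)$) whenever $\pi(Q^{-1}(m))>0$. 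Since $f\in \C_b(\P(\R^d))$ is continuous with respect to weak convergence, this yields
\[
f\bigl(\hat{\pi}(m,\pi_n,Q_n)\bigr)\,\pi_n\bigl(Q_n^{-1}(m)\bigr) \;\longrightarrow\; f\bigl(\hat{\pi}(m,\pi,Q)\bigr)\,\pi\bigl(Q^{-1}(m)\bigr)
\]
for every $m$ with $\pi(Q^{-1}(m))>0$.

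The only subtlety is handling indices $m$ with $\pi(Q^{-1}(m))=0$, where $\hat{\pi}(m,\pi,Q)$ has been defined arbitrarily and Lemma~\ref{TotalVarConvBins} gives no information. But this is harmless: boundedness of $f$ together with $\pi_n(Q_n^{-1}(m))\to 0$ forces $f(\hat{\pi}(m,\pi_n,Q_n))\,\pi_n(Q_n^{-1}(m)) \to 0$, which matches the corresponding term on the limit side (which is also zero regardless of how $\hat{\pi}(m,\pi,Q)$ is defined). Summing over $m=1,\ldots,M$ then yields $Pf(\pi_n,Q_n)\to Pf(\pi,Q)$, establishing the claim. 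There is no real obstacle here; the lemma is essentially a clean packaging of Lemmas~\ref{lemma_sqconv}(c) and~\ref{TotalVarConvBins} through the explicit finite-sum representation of $Pf$.
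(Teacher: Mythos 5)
Your proposal is correct and takes essentially the same route as the paper's own proof: the finite-sum representation of $Pf$ over the $M$ possible posteriors, Lemma~\ref{lemma_sqconv}(c) combined with Lemma~\ref{TotalVarConvBins} to get convergence of $\hat{\pi}(m,\pi_n,Q_n)$ and of the cell masses $\pi_n(Q_n^{-1}(m))$, and continuity and boundedness of $f$ to conclude. You are in fact slightly more explicit than the paper about the cells with $\pi(Q^{-1}(m))=0$, which the paper handles only implicitly via the boundedness of $f$.
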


The lemma implies that $Pf\in \C_b(\S\times \Q_c)$, so $\langle v_{t_k},
Pf\rangle \to \langle \bar{v},Pf\rangle $.  However, since for all $v$,
\[
\langle vP, f \rangle = \int_{\P(\R^d)\times \Q_c} f(\pi)P(d\pi'|\pi,Q) v(d\pi\,dQ) = \langle
v, Pf \rangle,
\]
this is equivalent to  $\langle v_{t_k}P,f\rangle \to \langle \bar{v}P,f\rangle
$. Combining this with  \eqref{eq_vtlim1} yields $\langle \bar{v}P,f\rangle =
\langle \bar{v},f\rangle $ which finishes the proof that $\bar{v}\in \G$.

Although $c(\pi,Q)$ is continuous on $\S\times \Q_c$ by
Lemma~\ref{continuityofc}, the limit relation \eqref{eq_cvtconv} does not follow
immediately since $\pi_0$ may not be in $\S$ and thus $v_t$ may not be supported
on $\S\times \Q_c$. However, since $\pi_t\in \S$ for all $t\ge 1$ with
probability~1, we have $v_t(\S\times \Q_c)\ge 1-1/t$, and we can proceed
as follows: Recall that $\S \times \Q_c$ is a closed subset of
$\P(\R^d)\times \Q_c$ by Lemma~\ref{lemma_sqconv} and the topology on
$\P(\R^d)\times \Q_c$ is metrizable. Thus by the Tietze-Urysohn extension
theorem \cite{Dud02} there exists $\tilde{c}\in \C_b(\P(\R^d)\times \Q_c)$ which
coincides with $c$ on $\S\times \Q_c$. Then since $v_{t_n}(\S\times \Q_c)\ge
1-1/t_n$ and both $c$ and $\tilde{c}$
are bounded,
\[
\lim_{n\to \infty}   \int_{\P(\R^d)\times \Q_c} \bigl|
\tilde{c}(\pi,Q)-c(\pi,Q)\bigr| v_{t_n}(d\pi
\, dQ)  =0.
\]
On the other hand,  $v_{t_n}\to \bar{v}$ implies
\begin{eqnarray*}
\lefteqn{  \lim_{n\to  \infty} \int_{\P(\R^d) \times \Q_c }\tilde{c}(\pi,Q)
  v_{t_n}(d\pi\, dQ) } \qquad  \\
&=&
\int_{\P(\R^d) \times \Q_c }\tilde{c}(\pi,Q)
\bar{v}(d \pi\,  dQ)\\
&=& \int_{\P(\R^d) \times \Q_c }c(\pi,Q)
\bar{v}(d \pi\,  dQ),
\end{eqnarray*}
where the last equality holds since $\bar{v}\in \G$ is
supported on $\S\times \Q_c$.
This proves  \eqref{eq_cvtconv}.

\emph{Proof of (b)}. We need the following simple lemma.

\begin{lemma}
\label{lem_relcomp}
Let $H$ be a collection of probability measures on $\P(\R^d)\times \Q_c$ such
that
\[
R\coloneqq \sup_{v\in H} \int_{P(\R^d)\times \Q_c} \biggl(\int_{\R^d}
\|x\|^2\pi(dx) \biggl) v(d\pi\, dQ) <\infty
\]
Then $H$ is tight and is thus relatively compact.
\end{lemma}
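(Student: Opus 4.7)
The plan is to prove tightness of $H$ in $\P(\P(\R^d)\times \Q_c)$ and then invoke Prokhorov's theorem to conclude relative compactness. Since $\Q_c$ is already compact by Lemma~\ref{lemma_sqconv}(b), it suffices to construct, for each $\epsilon>0$, a compact set $K_0\subset \P(\R^d)$ such that $v\bigl(\{(\pi,Q):\pi\notin K_0\}\bigr)<\epsilon$ uniformly in $v\in H$; then $K_0\times \Q_c$ is the desired compact set in $\P(\R^d)\times \Q_c$.

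The natural candidate is the second-moment sublevel set
\[
K_0^M \coloneqq \biggl\{\pi\in \P(\R^d):\int_{\R^d}\|x\|^2\pi(dx)\le M\biggr\}.
\]
I would first verify that $K_0^M$ is compact in $\P(\R^d)$: the functional $\pi\mapsto \int \|x\|^2\pi(dx)$ is lower semicontinuous in the weak topology (it is a supremum of integrals of bounded continuous truncations $\|x\|^2\wedge L$), so $K_0^M$ is closed; and for any $\pi\in K_0^M$, Markov's inequality gives $\pi(\{\|x\|>L\})\le M/L^2$, so $K_0^M$ is tight. Prokhorov's theorem then yields compactness.

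Next, I would apply Markov's inequality on the outer scale. Since by hypothesis the nonnegative measurable function $(\pi,Q)\mapsto \int \|x\|^2\pi(dx)$ has expectation at most $R$ under every $v\in H$, we get
\[
v\bigl(\{(\pi,Q):\pi\notin K_0^M\}\bigr) = v\biggl(\biggl\{(\pi,Q):\int\|x\|^2\pi(dx)>M\biggr\}\biggr)\le \frac{R}{M}.
\]
Choosing $M>R/\epsilon$ gives the required uniform bound, proving that $H$ is tight.

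I don't expect any serious obstacle here; the only small care is the lower semicontinuity (rather than continuity) of the second-moment functional, which is why sublevel sets are closed even though the functional is unbounded. Once tightness is in hand, relative compactness in $\P(\P(\R^d)\times \Q_c)$ follows directly from Prokhorov's theorem applied to the Polish space $\P(\R^d)\times \Q_c$.
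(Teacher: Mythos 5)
Your proof is correct and follows essentially the same route as the paper: both define the second-moment sublevel sets, prove they are compact via Markov's inequality (tightness) plus a truncation/lower-semicontinuity argument (closedness), and then apply Markov's inequality again at the level of $v$ to get the uniform bound $R/M$. No substantive differences.
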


\begin{proof}
For any $\alpha>0$ let
\[
K_{\alpha} \coloneqq \biggl\{ \pi\in \P(\R^d): \int_{\R^d} \|x\|^2\pi(dx) \le\alpha \biggr\}.
\]
Then $\pi(\{x: \|x\|^2> L\}) \le \alpha/L$ for all $\pi \in K_{\alpha}$ by
Markov's inequality. Hence $K_{\alpha}$ is tight and thus relatively compact. A standard truncation argument shows that if $\pi_k\to \pi$ (weakly)
for a sequence $\{\pi_k\}$ in $K_{\alpha}$, then
\[
\alpha \ge \limsup_{k\to \infty} \int_{\R^d} \|x\|^2\pi_k(dx) \ge  \int_{\R^d}
\|x\|^2\pi(dx)
\]
so $K_{\alpha}$ is also  closed. Thus $K_{\alpha}$ is compact.

Let $f(\pi)\coloneqq \int_{\R^d} \|x\|^2 \pi(dx)$. Then
\[
\int_{P(\R^d)\times \Q_c} f(\pi) v(d\pi\, dQ) \le R \quad \text{for all $v\in
  H$}
\]
Again by Markov's inequality,
\[
\int_{\P(\R^d)\times Q_c} f(\pi) v(d\pi\, dQ) \ge \alpha v\bigl((K_{\alpha})^c \times \Q_c\bigr)
\]
implying,  for all $v\in H$,
\[
 v(K_{\alpha} \times \Q_c) \ge 1-\frac{R}{\alpha}.
\]
Since $\Q_c$ is compact and $K_{\alpha}$ is compact for all $\alpha>0$, we
obtain that $H$ is tight.
\end{proof}

Let $\Pi$ be an arbitrary fixed policy in $\bar{\Pi}^C_W$, fix the
initial distribution $\delta_{x_0}$, and consider the corresponding sequence of
expected occupation measures $\{v_t\}$. Then
\begin{eqnarray*}
  \lefteqn{  \int_{\P(\R^d)\times \Q_c}  \biggl( \int_{\R^d}  \|x\|^2\pi(dx)
    \biggr) v_t(d\pi\, dQ)  } \\ 
&=& E_{\delta_{x_0}} \biggl[\frac{1}{ t} \sum_{k=0}^{t-1} \|x_k\|^2\biggr] \to
\int_{\R^d}  \|x\|^2 \pi^*(dx) <\infty
\end{eqnarray*}
by Assumption \ref{AssumptionB}. Hence
\begin{equation}
  \label{eq_vttight}
\sup_{t\ge 0}  \int_{\P(\R^d)\times \Q_c}  \biggl( \int_{\R^d}  \|x\|^2\pi(dx)
\biggr) v_t(d\pi\, dQ) <\infty.
\end{equation}
Thus $\{v_t\}$ is relatively compact by  Lemma~\ref{lem_relcomp}, proving part
(b) of the proposition.

\emph{Proof of (c)} We will show that $\G$  is
closed and relatively compact. To show closedness, let $\{v_n\}$ be a sequence
in $\G$ such that $v_n\to \bar{v}$. Using the notation introduced in the proof
of part~(a), we have for any $f\in C_b(\P(\R^d))$ by \eqref{eq_weaklimit},
\[
 \langle v_nP, f\rangle =  \langle v_n,f\rangle \to  \langle \bar{v} ,f\rangle.
\]
But we also have
\[
 \langle v_nP, f\rangle=  \langle v_n, P f\rangle \to   \langle \bar{v}, P
 f\rangle  =  \langle \bar{v}P, f\rangle ,
\]
where the limit holds by the weak-Feller property of $P$
(Lemma~\ref{weakContKernel}).  Thus $  \langle \bar{v}P , f\rangle =   \langle
\bar{v}, f\rangle$, showing that $\bar{v}\in \G$. Hence $\G$ is closed.

To show relative compactness, recall from \eqref{updateQuantizationNext} the
conditional distributions
\[
\hat{\pi}(m,\pi,Q)(dx_{t+1})= P(dx_{t+1}| \pi_t=\pi,Q_t=Q, q_t = m)
\]
for $m=1,\ldots,M$. 
For any $(\pi,Q)$ and Borel set $A\subset \R^d$,
\begin{eqnarray}
\lefteqn{  \int_{\P(\R^d)} \pi'(A) P(d\pi'| \pi, Q) } \qquad  \nonumber  \\*
&&  \hspace{-35pt} = \sum_{m=1}^M \hat{\pi}(m,\pi,Q)(A)  \,P\bigl(
\hat{\pi}(m,\pi,Q)| \pi, 
  Q\bigr)   \nonumber   \\
&&  \hspace{-35pt}   =\sum_{m=1}^M   \biggl(  \frac{1}{\pi(Q^{-1}(m))}
\!\!\!\!  \!\!\! \int\limits_{Q^{-1}(m)} \!\!\!  \!\!\! 
P(x_{t+1}\!\in\! A| x_t) \pi(dx_t) \biggr) \pi(Q^{-1}(m))   \nonumber  \\
&&  \hspace{-35pt}  =   \int_{\R^d}
P(x_{t+1}\in A| x_t) \pi(dx_t). \label{eq_inv1}
\end{eqnarray}
Now let $v\in \G$ and consider the ``average'' $\pi_v$ under $v$ determined by
\[
\pi_v(A) = \int_{\P(\R^d)\times \Q_c} \pi(A) v(d\pi\, dQ) = \int_{\P(\R^d)}
\pi(A)\hat{v}(d\pi),
\]
where $\hat{v}$ is obtained from $v(d\pi\, dQ)=\bar{\eta}(dQ|\pi) \hat{v}(d\pi)$.
Recall that $v$ is supported on $\S\times \Q_c$.
If $A$  has boundary of zero Lebesgue measure, the mapping $\pi\mapsto \pi(A)$ is
continuous on $\S$ and
the definition of $\G$ implies
\begin{eqnarray}
\lefteqn{ \pi_v(A)} \nonumber  \\
 & &  \hspace{-10pt} =\int_{\P(\R^d)\times \Q_c}  \pi(A) v(d\pi\, dQ) \nonumber \\
& &  \hspace{-10pt}  = \int_{\P(\R^d)\times \Q_c}
\int_{\P(\R^d)}  \pi'(A) P( d\pi'|
\pi, Q) v(d\pi\, dQ)  \nonumber \\
&  & \hspace{-15pt}   =  \int\limits_{\P(\R^d)}\int\limits_{\Q_c}
\int\limits_{\P(\R^d)}  \!\!\!\! \pi'(A) P( d\pi'|
\pi, Q)\bar{\eta}(dQ|\pi)  \hat{v}(d\pi).   \label{eq_vinariant}
\end{eqnarray}
Substituting \eqref{eq_inv1} into the last integral, we obtain
\begin{eqnarray*}
\pi_v(A)\!\!\! & =&  \!\!\!  \int\limits_{\P(\R^d)}\int\limits_{\Q_c}
  \int\limits_{\R^d}
P(x_{t+1}\in A| x_t) \pi(dx_t) \bar{\eta}(dQ|\pi)  \hat{v}(d\pi)   \\
&=&     \int_{\R^d}
P(x_{t+1}\in A| x_t) \pi_v(dx_t).
\end{eqnarray*}
Since the  Borel sets in $\R^d$ having boundaries of zero Lebesgue
measure form  a  separating class for $\P(\R^d)$, the above holds for all Borel
sets $A$, implying that $\pi_v=\pi^*$, the unique invariant measure for
$\{x_t\}$. Thus
\begin{eqnarray}
\lefteqn{ \int_{P(\R^d)\times \Q_c} \biggl(\int_{\R^d}
\|x\|^2\pi(dx) \biggl) v(d\pi\, dQ) } \nonumber \\
& =& \int_{\R^d} \|x\|^2\pi_v(dx) =
\int_{\R^d} \|x\|^2\pi^*(dx)   \label{eq_moments} 
\end{eqnarray}
for all $v\in \G$. Since the last integral is finite by
Assumption~\ref{AssumptionB}, Lemma~\ref{lem_relcomp} implies that $\G$ is
relatively compact.  \qed

\emph{Proof of Lemma~\ref{weakContKernel}.}
Consider a sequence $\{(\pi_n,Q_n)\}$ converging to some
$(\pi,Q)$ in $\S\times \Q_c$. Then for any $f\in \C_b(\P(\R^d))$,
\begin{eqnarray*}
\lefteqn{  \int\limits_{\P(\R^d)\times \Q_c} \!\!\! f(\pi')P( d\pi' | \pi_n,Q_n)
-  \!\!\!\! \int\limits_{\P(\R^d)\times \Q_c}  \!\!\! f(\pi')P( d\pi' | \pi,Q) }
\nonumber \\ 
&=& \sum_{m=1}^M \Bigl( f(\hat{\pi}(m,\pi_n,Q_n))P(\hat{\pi}(m,\pi_n,Q_n)|
\pi_n,Q_n) \\ 
& &  \qquad \qquad \mbox{}  - 
 f(\hat{\pi}(m,\pi,Q))P(\hat{\pi}(m,\pi,Q)| \pi,Q) \Bigr) \nonumber \\
&=& \sum_{m=1}^M \Bigl( f(\hat{\pi}(m,\pi_n,Q_n)) \pi_n\bigl(Q_n^{-1}(m)\bigr)
\\
& & \qquad \qquad \mbox{} -
 f(\hat{\pi}(m,\pi,Q)) \pi\bigl(Q^{-1}(m)\bigr) \Bigr). 
\end{eqnarray*}
From Lemma~\ref{lemma_sqconv} we have that $\pi_nQ_n\to \pi Q$ in total
variation which implies via Lemma~\ref{TotalVarConvBins} that
$\hat{\pi}(m,\pi_n,Q_n) \to \hat{\pi}(m,\pi,Q)$ in total variation and thus
weakly for all $m$ with $\pi\bigl(Q^{-1}(m)\bigr)>0$. The proof of the same
lemma shows that $\pi_n\bigl(Q_n^{-1}(m)\bigr) \to \pi\bigl(Q^{-1}(m)\bigr)$ for
all $m=1,\ldots M$. Since $f$ is continuous and bounded, the last sum converges
to zero as $n\to \infty$, proving the claim of the lemma. \qed

\section{Acknowledgements}

We are grateful to Vivek S. Borkar and Naci Saldi for technical discussions
related to the paper. We also  thank two anonymous reviewers for 
constructive comments. 

\pagebreak[2]

\end{document}